\documentclass[12pt]{amsart}
\usepackage{amsmath}
\usepackage{amsfonts}
\usepackage{amssymb}
\usepackage{amscd}
\usepackage{graphicx}
\usepackage[abbrev,alphabetic]{amsrefs}
\usepackage{hyperref}
\RequirePackage[dvipsnames,usenames]{color}
\usepackage{stmaryrd}
\usepackage{mathtools}
\usepackage{booktabs}
\usepackage{multirow}
\usepackage{stmaryrd} 
\newtagform{tiny}{\tiny(}{)}
\usepackage{tikz}
\usepackage{tikz-3dplot}

\makeatletter
\@addtoreset{section}{part}
\makeatother

\usepackage{mathtools}
\usepackage{hyperref}
\usepackage[margin=1.25in]{geometry}

\usepackage{amsthm}
\usepackage{comment}
\usepackage[all,cmtip]{xy}
\usepackage{tikz-cd}
\usetikzlibrary{cd}
\usepackage{mathtools}
\usepackage[all]{xy}

\makeatletter
\def\@tocline#1#2#3#4#5#6#7{\relax
  \ifnum #1>\c@tocdepth 
  \else
    \par \addpenalty\@secpenalty\addvspace{#2}%
    \begingroup \hyphenpenalty\@M
    \@ifempty{#4}{%
      \@tempdima\csname r@tocindent\number#1\endcsname\relax
    }{%
      \@tempdima#4\relax
    }%
    \parindent\z@ \leftskip#3\relax \advance\leftskip\@tempdima\relax
    \rightskip\@pnumwidth plus4em \parfillskip-\@pnumwidth
    #5\leavevmode\hskip-\@tempdima
      \ifcase #1
       \or\or \hskip 1em \or \hskip 2em \else \hskip 3em \fi%
      #6\nobreak\relax
    \hfill\hbox to\@pnumwidth{\@tocpagenum{#7}}\par
    \nobreak
    \endgroup
  \fi}
\makeatother

\newsavebox{\pullback}
\sbox\pullback{%
\begin{tikzpicture}%
\draw (0,0) -- (1ex,0ex);%
\draw (1ex,0ex) -- (1ex,1ex);%
\end{tikzpicture}}

\newsavebox{\pullbackdl}
\sbox\pullbackdl{%
\begin{tikzpicture}%
\draw (-1ex,0ex) -- (0ex,0ex);%
\draw (0ex,-1ex) -- (0ex,0ex);%
\end{tikzpicture}}

\newsavebox{\pushoutdr}
\sbox\pushoutdr{%
\begin{tikzpicture}%
\draw (-1ex,-1ex) -- (-1ex,0ex);%
\draw (-1ex,0ex) -- (0ex,0ex);%
\end{tikzpicture}}

\newcommand{\rdown}[1]{\lfloor #1 \rfloor}

\renewcommand{\P}{\mathbb{P}}
\newcommand{\Z}{\mathbb{Z}}
\newcommand{\Q}{\mathbb{Q}}

\newcommand{\R}{\mathbb{R}}
\newcommand{\F}{\mathbb{F}}

\newcommand{\MO}{\mathcal{O}}

\newcommand{\red}{\mathrm{red}}

\newcommand{\wt}{\widetilde}
\newcommand{\ol}{\overline}

\DeclareMathOperator{\pr}{pr}

\DeclareMathOperator{\Supp}{Supp}
\DeclareMathOperator{\Spec}{Spec}

\DeclareMathOperator{\Pic}{Pic}
\DeclareMathOperator{\NE}{NE}
\DeclareMathOperator{\Ex}{Ex}

\DeclareMathOperator{\mult}{mult}

\theoremstyle{plain}
\newtheorem{theorem}{Theorem}[section]
\newtheorem{thm}[theorem]{Theorem}

\newtheorem{prop}[theorem]{Proposition}

\newtheorem{lem}[theorem]{Lemma}

\newtheorem*{claim*}{Claim}
\newtheorem{step}{Step}

\newtheorem{theoremA}{Theorem}

\theoremstyle{definition}

\newtheorem{ex}[theorem]{Example}

\newtheorem{nota}[theorem]{Notation}

\newtheorem*{setup*}{Setup}

\theoremstyle{remark}
\newtheorem{rem}[theorem]{Remark}



\numberwithin{equation}{theorem}

\title[Mumford vanishing for threefolds in positive characteristic]{Mumford vanishing for threefolds in positive characteristic}

\author{Tatsuro Kawakami}
\address{Graduate School of Mathematical Sciences, 
The University of Tokyo, 
3-8-1 Komaba, Meguro-ku, Tokyo 153-8914, JAPAN} 
\email{kawakami@ms.u-tokyo.ac.jp}
\author{Hiromu Tanaka} 
\address{Department of Mathematics, 
Graduate School of Science, 
Kyoto University, 
Kyoto 606-8502, JAPAN} 
\email{tanaka.hiromu.7z@kyoto-u.ac.jp}

\begin{document}

\begin{abstract}
Let $X$ be a projective klt threefold in characteristic $p>5$ and 
let $L$ be a nef Cartier divisor on $X$. 
We show that $H^1(X, -L)=0$ for the following two cases: 
(1) $K_X$ is not big and $L$ is big; 
(2) $-K_X$ is nef and $L$ is of numerical dimension two. 
\end{abstract}

\subjclass[2020]{14F17, 14E30, 14G17} 
\keywords{vanishing theorems, minimal model program, positive characteristic}
\maketitle

\setcounter{tocdepth}{2}

\tableofcontents

\section{Introduction}

Vanishing theorems play a central role in algebraic geometry,
providing powerful tools to study the geometry and cohomology of algebraic varieties. 
In characteristic zero, classical results such as the Kodaira vanishing theorem
and the Kawamata--Viehweg vanishing theorem are fundamental in the minimal model program.
However, in positive characteristic, these vanishing theorems are known to fail in general,
which makes it important to identify situations in which similar statements still hold.

In this direction, vanishing results for surfaces are well established
under additional assumptions.
More precisely, if $p>3$ and
$X$ is a projective normal surface such that $K_X$ is not big,
then $H^1(X, -L)=0$  for any nef and big Cartier divisor $L$ \cite[Theorem 3(a)]{Muk13}.
In this article, we prove the following analogous vanishing theorem for threefolds.

\begin{theoremA}[Theorem \ref{t H^1 nefbig}]\label{intro Mumford}
Let $X$ be a projective normal threefold over a field of characteristic $p>5$.
Assume that $K_X$ is $\Q$-Cartier and not big.
Let $L$ be a nef and big Cartier divisor.
Then $H^1(X, -L)=0$.
\end{theoremA}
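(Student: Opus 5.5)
We will argue along the lines of Mukai's surface argument and the standard Frobenius approach. The key observation is that $H^1(X,-L)\neq 0$ forces the existence of a nontrivial extension, which yields an inseparable cover or a foliation-type structure. Concretely, suppose $H^1(X,-L)\neq 0$. By Serre vanishing applied to $H^1(X,-p^eL)$ (or rather to its dual $H^{2}(X, K_X+p^eL)$ after reduction to a resolution, using that $L$ is nef and big), the Frobenius pullback maps $F^{e*}\colon H^1(X,-p^{e}L)\to H^1(X,-p^{e+1}L)$ are eventually zero. We therefore obtain $m\geq 0$ and a nonzero class $\xi\in H^1(X,-p^mL)$ killed by Frobenius. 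Replacing $L$ by $p^mL$ (still nef and big Cartier), we may assume that $\xi \in \ker(F^*\colon H^1(X,-L)\to H^1(X,-pL))$. This kernel is identified with $H^0(X, B^1_X(-pL))$-type data: the class gives a nonzero section of $(F_*B^1\Omega)\otimes\mathcal{O}(-L)$, that is, a nonzero exact form $df$ with $f$ a rational section twisted by $pL$. From this we construct a purely inseparable degree-$p$ cover $Y\to X$ (normalization of $X$ in $K(X)(f^{1/p})$, i.e.\ an $\alpha_{L}$-torsor), together with the canonical bundle formula $K_Y \sim_{\mathbb Q} \pi^*(K_X + (p-1)L)$ up to an effective correction coming from the conductor/foliation. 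This is Ekedahl's/Mukai's construction, and it works on the normal (klt is not needed here, only $\Q$-Cartier $K_X$) variety after working on the regular locus, which has codimension-two complement.

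The second step uses the hypothesis that $K_X$ is not big. Since $L$ is nef and big and $p-1\geq 5$, the divisor $K_X+(p-1)L$ tends to be big once $K_X$ is not too negative. The strategy is to run a $K_X$-MMP (available for threefolds in characteristic $p>5$ by Hacon--Xu, Birkar, and others) on a klt model, in order to reach either a Mori fibre space or a minimal model with $\kappa\le 2$. We then show that the inseparable cover $Y$ (or the associated foliation) would have to be uniruled in an incompatible way. In the Mori fibre space case, we restrict to a general fibre $F$, which is a curve or surface with $-K_F$ ample. On $F$ the restricted class $L|_F$ is nef and big, and the corresponding vanishing holds: for a curve it is trivial; for a del Pezzo-type surface we use Mukai's Theorem 3(a) or Kodaira vanishing for log del Pezzo surfaces in $p>5$. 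Hence $\xi|_F=0$, and we push the argument down via the Leray spectral sequence, $H^1(X,-L)\hookrightarrow H^0(Z,R^1f_*\mathcal O(-L))\oplus H^1(Z,f_*\mathcal O(-L))$. Here $f_*\mathcal O(-L)=0$, because $L$ is big on fibres, and $R^1f_*\mathcal O(-L)=0$ by fibrewise vanishing plus torsion-freeness. Since the MMP steps change $X$, we must also check that $H^1(-L)$ is controlled along birational maps; this uses a pullback of $L$ to a common resolution and the injectivity $H^1(X,-L)\hookrightarrow H^1(W,-g^*L)$.

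For the case in which the minimal model has $\kappa(K_X)\in\{0,1,2\}$ and $K_X$ is semiample (abundance for threefolds in $p>5$), we use the Iitaka fibration $X\to Z$ with $\dim Z<3$. The general fibre has $K_F\equiv 0$, and we apply the same Leray argument, now requiring vanishing of $H^1(F,-L|_F)$ for $K$-trivial fibres. This is the main obstacle. For $\kappa=0$ there is no fibration at all, and we must prove $H^1(X,-L)=0$ directly for klt Calabi--Yau-type threefolds. Here we would use the inseparable cover: $K_Y\equiv \pi^*((p-1)L)-(\text{effective})$, and we compare with the $p$-closed foliation, whose canonical class is $K_X+(p-1)L$-like and thus big. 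By the Miyaoka--Shepherd-Barron-type bend-and-break (in the form valid in char $p$ for foliations with big $-K$ of the quotient), this bigness forces $Y$, and hence $X$, to be uniruled, contradicting $K_X\equiv 0$ being pseudo-effective with a klt canonical model. Making this contradiction precise with the singularities and with $p>5$ bounds is the expected hard step. We expect it to mirror Mukai's surface computation, namely that $K_Y$ is big yet $Y$ is dominated by rational curves.
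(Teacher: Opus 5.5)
There is a genuine gap, and it starts with a sign error. For the $\alpha_{\mathcal{L}^{-1}}$-torsor $\varphi\colon X'\to X$ attached to a Frobenius-killed class in $H^1(X,-L)$, adjunction inside the ambient $\mathcal{L}^{-1}$-torsor gives $\omega_{X'}\simeq\varphi^*\MO_X(K_X-(p-1)L)$, not $K_X+(p-1)L$. After normalising, the composite $\psi\colon X''\to X'\to X$ satisfies $\psi^*K_X\sim K_{X''}+D+(p-1)\psi^*L$, where $D\ge 0$ is the conductor. With the correct sign, the hypothesis that $K_X$ is not big is used \emph{on the cover}: it says that $K_{X''}+(p-1)\psi^*L$ is not big. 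What you are missing is an effective Fujita-type bound, which is the core of the paper. Let $Y$ be a projective normal threefold in characteristic $p>5$ with $\Q$-Cartier $K_Y$, and let $L_Y$ be a nef and big Cartier divisor on $Y$; then $K_Y+5L_Y$ is pseudo-effective (Theorem~\ref{t Fujita}). Since $p-1>5$, this makes $K_{X''}+D+(p-1)\psi^*L$ big, hence $K_X$ big, a contradiction. The MMP is used only to prove that bound. One runs a $(K+5L)$-MMP, and every birational step is $L$-trivial (Proposition~\ref{p 3L MMP step}, via the pl-decomposition, adjunction to a plt centre and the surface bound of Lemma~\ref{l surface 3L}). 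So $L$ descends along the whole programme, and the Picard-number-one Fano case is excluded by a Shokurov-type non-vanishing and log canonical threshold argument.

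The route you sketch instead cannot be completed, for four reasons.
\begin{enumerate}
\item A $K_X$-MMP on $X$ does not carry $L$ along. The push-forward of $L$ need not be nef or Cartier, and $L$ is not a pullback from the output, so nothing relates $H^1(X,-L)$ to cohomology there. The injection $H^1(X,-L)\hookrightarrow H^1(W,-g^*L)$ only moves the problem to a resolution.
\item The fibrewise step is false for curve fibres: $H^1(\P^1,\MO(-d))\neq 0$ for $d\ge 2$. Over a conic bundle, $R^1f_*\MO_X(-L)$ is therefore a nonzero vector bundle and the Leray sequence yields nothing. The same happens for the elliptic fibres when $\kappa(X,K_X)=2$.
\item Where the general fibre does satisfy vanishing, you only learn that $R^1f_*\MO_X(-L)$ is torsion. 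Torsion-freeness of higher direct images is not available in characteristic $p$.
\item The $\kappa=0$ case is left open, and the intended contradiction would not work as stated anyway. In characteristic $p$ uniruledness is compatible with $K_X\equiv 0$, e.g.\ a unirational supersingular K3 surface times an elliptic curve.
\end{enumerate}
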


\begin{rem}
\begin{enumerate}
\item 
The same assertion as in Theorem \ref{intro Mumford} does not hold in characteristic two,
even if the base field is algebraically closed (Theorem \ref{t Mukai p=2}).
\item 
We cannot drop the assumption that $K_X$ is not big, 
as otherwise there exists a counterexample in arbitrary characteristic \cite[Theorem 2]{Muk13}.
\item 
The above result is known in the case when $p>7$, $-K_X$ is nef, and $L$ is ample \cite[Theorem 1.6]{PW22}.
\end{enumerate}
\end{rem}

Mumford established the following vanishing theorem \cite[Theorem 2 in page 95]{Mum67}
\[
H^1(X, -L)=0
\]
for a projective normal variety $X$ in characteristic zero and a semi-ample Cartier divisor $L$
satisfying $\nu(X, L) \geq 2$. 
It is natural to ask under what assumptions the same conclusion holds 
for threefolds in positive characteristic. 
Let $S$ be a smooth projective surface with an ample Cartier divisor $L_S$ such that $H^1(S, L_S) \neq 0$.
Then $H^1(X, -L) \neq 0$ for $X := S \times \P^1$ and $L := \pr_1^*L_S$.
Therefore, even if we impose the condition $\kappa(X, K_X) = -\infty$,
Mumford vanishing fails for smooth threefolds in arbitrary positive characteristic. 
In this article, we establish Mumford vanishing for   threefolds with nef anti-canonical divisors in characteristic $p>5$.

\begin{theoremA}[Theorem \ref{t nu=2 Mumford}]\label{intro nu=2}
Let $X$ be a projective klt threefold over an algebraically closed field of characteristic $p>5$ such that $-K_X$ is  nef.
Let $L$ be a nef Cartier divisor satisfying $\nu(X, L) \geq 2$.
Then $H^1(X, -L)=0$.
\end{theoremA}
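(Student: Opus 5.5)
We reduce to two cases according to $\nu(X,L)$. If $\nu(X,L)=3$, then $L$ is nef and big. Since $-K_X$ is nef, $K_X$ is not big (a nef divisor $-K_X$ with $K_X$ big would force $K_X\equiv 0$, contradicting bigness). Theorem \ref{intro Mumford} (Theorem \ref{t H^1 nefbig}) then gives $H^1(X,-L)=0$. So the substance lies in the case $\nu(X,L)=2$, which we assume from now on.

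\textbf{Step 1: semiampleness and a fibration.} Since $-K_X$ is nef, $L-K_X$ is nef. Because $\nu(X,L)=2$, we have $L^2\not\equiv 0$, and $(L-K_X)\cdot L\cdot H>0$ for an ample $H$; from this one would like to deduce $\nu(L-K_X)\ge 2$. I would then apply the basepoint-free theorem for klt threefolds in characteristic $p>5$ in its abundance-type form for nef divisors $L$ with $L-K_X$ nef and of sufficiently large numerical dimension. Using Keel's results, or the known semiampleness results for log canonical threefolds with $\nu\ge 2$ (note that $X$ is rationally chain connected-ish when $-K_X$ is nef and not numerically trivial, and when $K_X\equiv0$ abundance is known), the goal is that $L$ is semiample. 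This yields a contraction $f\colon X\to Z$ onto a normal surface $Z$ with $L=f^*A$ for an ample Cartier divisor $A$ on $Z$ (Cartier-ness of $A$ up to a multiple; we handle torsion by replacing $A$ with its pullback-descent, using $f_*\MO_X=\MO_Z$).

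\textbf{Step 2: Leray.} We have $0\to H^1(Z,f_*\MO_X(-L))\to H^1(X,-L)\to H^0(Z,R^1f_*\MO_X\otimes\MO_Z(-A))$. Here $f_*\MO_X(-L)=\MO_Z(-A)$. The general fibre $F$ is a curve with $-K_F$ nef, i.e.\ a conic or an elliptic curve. For the first term we need $H^1(Z,-A)=0$. For this we use the surface result \cite[Theorem 3(a)]{Muk13}: since $-K_X$ is nef, the canonical bundle formula (valid for $p>5$ after checking the generic fibre is smooth, since $p>3$ excludes wild conics and quasi-elliptic fibrations) gives $-K_Z$ pseudo-effective up to a boundary, so $K_Z$ is not big, and $Z$ has klt-type (or at worst rational/lc) singularities. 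For the second term, $R^1f_*\MO_X$ is either torsion (conic bundle case: generically zero, and in fact zero since klt fibres over a surface with rational generic fibre have $R^1f_*\MO_X=0$, by rationality of the fibres in $p>5$), or in the elliptic case a rank-one sheaf $\cong\omega_Z$-twisted dual of $f_*\omega_{X/Z}$. Then $H^0(Z,R^1f_*\MO_X(-A))$ vanishes because $R^1f_*\MO_X$ is torsion-free with dual related to the nef (by nefness of $-K_X$) part of the moduli divisor, so its degree against $A$ is nonpositive while $-A$ is anti-ample.

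\textbf{Main obstacle.} I expect the hardest step to be Step 1, namely producing the fibration $X\to Z$: semiampleness of $L$ with $\nu=2$ in positive characteristic, including the case $K_X\equiv 0$. The next hardest step is controlling $R^1f_*\MO_X$ in the elliptic case, where torsion in $R^1f_*$ over special points must be excluded. For the latter I would first try Grothendieck duality, $R^1f_*\MO_X\cong (f_*\omega_{X/Z})^{\vee}$ modulo torsion, together with a direct argument that torsion sections would give nonzero $H^1$ on fibres contradicting klt.
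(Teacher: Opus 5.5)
Your treatment of $\nu(X,L)=3$ is correct and matches the paper. The genuine gap is Step~1. There is no basepoint-free or abundance-type theorem that makes a nef $L$ semi-ample when $L-K_X$ is nef but not big. In fact the conclusion you want is false in general. Take $X=E_1\times E_2\times E_3$ (so $K_X=0$) over an algebraically closed field admitting a non-torsion $P\in\Pic^0(E_3)$ (e.g.\ $k$ uncountable), an ample $A$ on $E_1\times E_2$, and $L:=\pr_{12}^*A\otimes\pr_3^*P$. Then $L$ is nef with $\nu(X,L)=2$. But $L|_{\{s\}\times E_3}\simeq P$ is not torsion, so $L$ is not semi-ample, and no fibration $f\colon X\to Z$ with $L=f^*A_Z$, $A_Z$ ample, exists. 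The theorem is not contradicted here, since $H^1(X,-L)=0$ by K\"unneth.

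Your side remarks do not repair this. $-K_X$ nef and $\not\equiv 0$ does not give rational chain connectedness (e.g.\ $E\times\P^2$), and abundance for $K_X\equiv0$ says nothing about $L$. Independently, your elliptic branch of Step~2 is left open: torsion in $R^1f_*\MO_X$ and positivity of $f_*\omega_{X/Z}$ in characteristic $p$ are not settled.

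The missing idea is to argue by contradiction and let the non-vanishing itself produce the fibration.
\begin{itemize}
\item If $H^1(X,-L)\neq 0$, Lemma~\ref{l H^1 easy} lets you replace $L$ by $p^eL$ so that in addition $H^1(X,-pL)=0$.
\item A non-trivial $\alpha_{\mathcal L^{-1}}$-torsor followed by normalisation then gives a purely inseparable degree-$p$ cover $\psi\colon Y\to X$ with $K_Y+D+(p-1)\psi^*L\sim\psi^*K_X$, $D\geq 0$.
\item Since $-K_X$ is nef and $p-6>0$, intersecting with $A_Y^2$ shows that $K_Y+5\psi^*L$ is not pseudo-effective.
\item A $(K+5L)$-MMP from a resolution of $Y$ is $L$-trivial (Proposition~\ref{p 3L MMP step}). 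It ends in a Mori fibre space over a surface $T$ on which $L$ descends to a nef and big divisor. Hence $L$ is EWM in Keel's sense.
\item The generic fibre of the induced map on $Y$ is $\P^1$. Via the universal homeomorphism this forces $-K_{X_\eta}$ to be ample, so the elliptic case never occurs.
\end{itemize}
Only after this do \'etale-local descent (Theorem~\ref{t nume triv descent}) and \cite[Theorem 4.6]{CT20} give semi-ampleness and $L\sim\pi^*L_S$. From there your Leray argument in the conic-bundle case is essentially the paper's: $R^1\pi_*\MO_X=0$ by \cite{BK23}, and $H^1(S,-L_S)=0$ follows from $-K_S$ being pseudo-effective \cite{Eji19}.
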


As an application, we obtain the following result for weak Fano threefolds.

\begin{theoremA}[Theorem \ref{t irreg}]
Let $X$ be a smooth weak Fano threefold over an algebraically closed field of characteristic $p>5$.
Then the following hold:
\begin{enumerate}
\item 
$H^i(X, \MO_X)=0$ for every $i>0$.
\item 
$X$ is simply connected, i.e., if $\pi: Y \to X$ is a finite \'etale morphism from a normal threefold $Y$, then $\pi$ is an isomorphism.
\item 
$\Pic X \simeq \Z^{\oplus \rho(X)}$.
\end{enumerate}
\end{theoremA}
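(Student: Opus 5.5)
The plan is to prove (3) and (2) as formal consequences of (1), and to prove (1) from Theorem \ref{intro Mumford} applied to $L:=-K_X$ together with an analysis of the Picard scheme. Here $-K_X$ is a nef and big Cartier divisor and $K_X$ is not big, so Theorem \ref{intro Mumford} applies to $-K_X$ and to $-K_X+M$ for any numerically trivial $M$.

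\textbf{Step 1: $H^3$ and $H^2$.} We have $H^3(X,\MO_X)\simeq H^0(X,K_X)^\vee=0$, since $-K_X$ is big and nonzero. By Serre duality $H^2(X,\MO_X)\simeq H^1(X,K_X)^\vee=H^1(X,-(-K_X))^\vee$, and this vanishes by Theorem \ref{intro Mumford}. Hence $\chi(X,\MO_X)=1-h^1(X,\MO_X)$.

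\textbf{Step 2: $H^1$.} I would argue through $\mathrm{Pic}^0_X$.
\begin{enumerate}
\item Since $p>5$, a smooth weak Fano threefold is rationally chain connected by the known MMP results. Hence the Albanese variety is trivial and $(\mathrm{Pic}^0_X)_{\red}=0$.
\item It follows that $\mathrm{Pic}^0_X$ is infinitesimal, with tangent space $H^1(X,\MO_X)$. If this tangent space were nonzero, $\mathrm{Pic}^0_X$ would contain a subgroup scheme isomorphic to $\mu_p$ or $\alpha_p$, giving a nontrivial $\mu_p$- or $\alpha_p$-torsor $\pi\colon Y\to X$.
\item In the $\mu_p$ case this is a nontrivial $p$-torsion line bundle, which Step 3 excludes once we know $\chi(\MO_X)=1$. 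To break the circularity I would run the Step 3 computation directly with $\chi(\MO_X)=1-h^1$. A nontrivial numerically trivial $M$ has $\chi(M)=\chi(\MO_X)$ by Riemann--Roch and, by the vanishing computation below, $\chi(M)=-h^1(M)\le 0$. This only gives $h^1(\MO_X)\ge 1$, so the $\mu_p$ case also needs a further argument.
\item For the $\alpha_p$ case, and to finish the $\mu_p$ case, I would compute on $Y$. We have $K_Y=\pi^*(K_X+N)$ with $N$ numerically trivial, and $Y$ is Gorenstein with $-K_Y$ nef and big. Applying Theorem \ref{intro Mumford} on $Y$ (after checking $Y$ is normal) and comparing $\chi(\MO_Y)$ with $\chi(\pi_*\MO_Y)$, which is filtered by numerically trivial line bundles on $X$, should force $h^1(\MO_X)=0$.
\end{enumerate}
This step is where I expect the main obstacle. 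The vanishing theorems only control $H^1(-L)$ for $L$ nef and big, whereas $H^1(\MO_X)$ is dual to $H^2(K_X)$, which is not of that form. So genuinely new input is required, namely the geometry of infinitesimal torsors. If the torsor route stalls, I would first try cutting by a general member $S\in|{-mK_X}|$, using the basepoint-free theorem for $p>5$. Since $H^1(X,mK_X)=0$, the map $H^1(\MO_X)\hookrightarrow H^1(\MO_S)$ is injective, and I would then try to control $S$.

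\textbf{Step 3: Pic.} With (1), $\mathrm{Pic}^0_X$ is trivial, so it suffices to show $\Pic X$ is torsion free. Let $M$ be a nontrivial torsion line bundle. Then $H^0(M)=0$, and $H^3(M)\simeq H^0(K_X-M)^\vee=0$. Also $H^2(M)\simeq H^1(X,-(-K_X+M))^\vee=0$ by Theorem \ref{intro Mumford}, since $-K_X+M$ is nef, big and Cartier. Hence $\chi(M)=-h^1(M)\le 0$. But $\chi(M)=\chi(\MO_X)=1$ by Riemann--Roch, a contradiction. Thus $\Pic X\simeq \mathrm{NS}(X)$ is free of rank $\rho(X)$.

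\textbf{Step 4: simple connectedness.} Let $\pi\colon Y\to X$ be finite étale of degree $d$. Then $Y$ is a smooth weak Fano threefold, since $-K_Y=\pi^*(-K_X)$ is nef and big. Applying (1) to both $X$ and $Y$ gives $1=\chi(\MO_Y)=d\,\chi(\MO_X)=d$. Here $\chi(\MO_Y)=d\,\chi(\MO_X)$ by Riemann--Roch, since $\chi(\MO)=-K\cdot c_2/24$ is multiplicative under étale covers. Hence $\pi$ is an isomorphism.
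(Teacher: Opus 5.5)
\textbf{Gap: part (1) is never proved.} Your Steps 1, 3 and 4 match the paper's arguments: Serre duality with Theorem \ref{t H^1 nefbig} applied to $-K_X$ and $-K_X+N$, then Riemann--Roch, then multiplicativity of $\chi$ under \'etale covers. But Steps 3 and 4 both need $\chi(X,\MO_X)=1$, and Step 2 does not establish $H^1(X,\MO_X)=0$. By your own account, the $\mu_p$ computation only gives $h^1(\MO_X)\geq 1$. The $\alpha_p$ computation ends with ``should force''. The hyperplane-section fallback is not carried out. So (1) is unproved, and (2) and (3) with it.

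The torsor case analysis is also misdirected. A subgroup $\mu_p\subset\mathrm{Pic}^0_X$ corresponds by Cartier duality to an \'etale $\Z/p\Z$-torsor, not to a $p$-torsion line bundle. An infinitesimal group scheme has no nontrivial $k$-points.

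\textbf{The missing idea.} You already had what you need after Step 1, and no torsors are required. $H^1(X,\MO_X)$ is the tangent space and $H^2(X,\MO_X)$ the obstruction space for deforming line bundles. So the completed local ring of $\mathrm{Pic}_X$ at the origin is a power series ring in $h^1(\MO_X)$ variables modulo at most $h^2(\MO_X)$ equations. This gives
\[
\dim \mathrm{Pic}^0_X \;\geq\; h^1(X,\MO_X)-h^2(X,\MO_X).
\]
Rational chain connectedness \cite{GNT19} makes the Albanese trivial, so $\dim \mathrm{Pic}^0_X=0$. Hence $h^1(X,\MO_X)\leq h^2(X,\MO_X)=0$, which is the one-line argument in the paper.

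Put differently, once $H^2(\MO_X)=0$ the Picard scheme is smooth, so an infinitesimal $\mathrm{Pic}^0_X$ is automatically trivial. That is exactly what your items (2)--(4) were trying to rule out by hand. With (1) in place, your Steps 3 and 4 go through as written.
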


\subsection{Overview of the proof of Theorem \ref{intro Mumford}}

Let $X$ and $L$ be as in Theorem \ref{intro Mumford}. 
Suppose that  $H^1(X, -L) \neq 0$. 
It suffices to derive a contradiction. 
Taking a desingularisation, we may assume that $X$ is regular. 
It is known that $H^1(X, -mL)=0$ for $m \gg 0$, 
and hence we may assume that $H^1(X, -pL)=0$. 
Then we can find a 
 finite inseparable surjective morphism  
\[
\varphi: X' \to X 
\]
of degree $p$ such that $X'$ is a projective Gorenstein threefold  and $K_{X'} \sim \varphi^*(K_X -(p-1)L)$. 
Taking the normalisation $\nu : X'' \to X'$ of $X'$, 
we get 
$K_{X''} +D \sim \nu^*K_{X'}$ for some effective divisor $D$ on $X''$. 
For the induced composite morphism 
\[
\psi: X'' \xrightarrow{\nu} X' \xrightarrow{\varphi} X, 
\]
it holds that 
\[
K_{X''} +D+ 5\psi^*L +(p-6)\psi^*L  \sim \nu^*(K_{X'} +(p-1) \varphi^* L) 
\sim \nu^*\varphi^*K_X =\psi^*K_X. 
\]
Since $K_{X''} + 5\psi^*L$ is pseudo-effective  (Theorem \ref{intro Fujita}), 
it follows from $p-6>0$ that $K_{X''} +D+ 5\psi^*L +(p-6)\psi^*L$ is big. 
Then $\psi^*K_X$ is big, and hence so is $K_X$. This is absurd.

\begin{theoremA}\label{intro Fujita}
Let $X$ be a projective normal threefold over a  field  of characteristic $p>5$ such that $K_X$ is $\Q$-Cartier. 
Take a nef and big Cartier divisor $L$ on $X$. 
Then $K_X+5L$ is pseudo-effective. 
\end{theoremA}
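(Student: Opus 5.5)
We argue by contradiction using the MMP for threefolds in characteristic $p>5$, which is available for klt (and dlt) pairs together with the cone theorem and length bounds for extremal rays. Suppose $K_X+5L$ is not pseudo-effective.

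\emph{Reduction to the $\Q$-factorial terminal case.} Take a log resolution and then a terminalisation, or a dlt modification $f\colon Y\to X$. Write $K_Y+E = f^*K_X + F$, where $E$ is the reduced exceptional divisor (or a suitable boundary) and $F\ge 0$ is exceptional. The Cartier divisor $f^*L$ is nef and big. Suppose $K_X+5L$ is not pseudo-effective. Then $f^*(K_X+5L)$ is not pseudo-effective either. Since $F$ is exceptional, its effective part does not restore pseudo-effectivity after pushing forward, so $K_Y+E+5f^*L$ is not pseudo-effective. The statement therefore reduces to the following: for a projective $\Q$-factorial dlt pair $(Y,\Delta)$ with $\Delta$ reduced, and a nef and big Cartier divisor $M$, the divisor $K_Y+\Delta+5M$ is pseudo-effective. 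Taking $\Delta$ reduced is what allows the boundary coefficients $\le 1$ needed to run a dlt MMP in characteristic $p>5$.

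\emph{Run an MMP.} Choose an ample $A$ and $t>0$ with $K_Y+\Delta+5M+tA$ on the pseudo-effective threshold boundary. Run a $(K_Y+\Delta+5M)$-MMP with scaling, or equivalently a $(K_Y+\Delta)$-MMP. At each step the contracted extremal ray $R$ is $(K_Y+\Delta)$-negative. By the cone theorem with length bound, $R$ is spanned by a rational curve $C$ with $-(K_Y+\Delta)\cdot C\le 6$; the length estimate in dimension three is $2\dim=6$, valid in characteristic $p>5$ via bend-and-break on the base of a Mori fibre space. The step requires $(K_Y+\Delta+5M)\cdot C<0$, so $M\cdot C<6/5$, hence $M\cdot C\in\{0,1\}$.

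If $M\cdot C=0$, the contraction is $M$-trivial. Then $M$ descends as a nef Cartier divisor; this uses the basepoint-free theorem / Cartier descent for $K$-negative contractions of threefolds, known in characteristic $p>5$. Bigness of $M$ is preserved under birational steps, and a Mori fibre space is impossible because $M$ is big but trivial on fibres. If $M\cdot C=1$, the contraction is $M$-positive. Here I would invoke a sharper length bound: $-(K_Y+\Delta)\cdot C\le 4$ for birational contractions, and $\le 6$ only for fibre type over a point. In the birational case this forces $(K+\Delta+5M)\cdot C\ge 1>0$, so that case cannot occur.

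\emph{Terminal outcome.} The MMP terminates in characteristic $p>5$ for threefolds. Since $K+\Delta+5M$ is not pseudo-effective, it ends in a Mori fibre space $g\colon Y'\to Z$ on which $M'$ remains nef and big. If $\dim Z>0$, the general fibre is a curve or surface $G$ with $M'|_G$ nef and big, and $(K_G+\Delta_G+5M'|_G)$ is not pseudo-effective. This contradicts the lower-dimensional versions: on $\P^1$ we have $\deg(K+5M)\ge -2+5>0$, and for surfaces the bound $K+3M$ is pseudo-effective. If $Z$ is a point, then $\rho=1$ and $Y'$ is a log Fano threefold with $M'$ ample, and we need $-(K+\Delta)\not>5M'$ numerically. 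This is the Fujita-type bound on the Fano index of threefolds, $K+4H$ pseudo-effective for $H$ ample Cartier. In characteristic $p>5$ it follows from the length bound $-K\cdot C\le 4$ for the minimal rational curve, with an exception only for $\P^3$, where $-K=4H$.

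\emph{Main obstacle.} The hardest step is the length bounds in positive characteristic for singular (dlt) threefolds. In particular we must ensure that an $M$-positive $K$-negative extremal ray with $-(K+\Delta)\cdot C\ge 5M\cdot C$ never occurs, and that the descent of $M$ along $M$-trivial steps keeps it Cartier. I would first try to bypass Cartier descent by working with nef $\Q$-Cartier $M$ and using integrality only through $M\cdot C\in\frac1r\Z$ on the original model. If that fails, I would instead keep $M$ pulled back on a common resolution throughout, which preserves integrality of $M\cdot C$ for curves on the original $Y$.
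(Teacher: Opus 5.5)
Your overall skeleton matches the paper's: reduce to a $\Q$-factorial model, run a $(K+5L)$-MMP whose birational steps are $L$-trivial, and rule out Mori fibre spaces over positive-dimensional bases using the curve and surface bounds. The argument breaks down, however, where the MMP ends with $\rho(Y')=1$ and $Z$ a point. If the final boundary $\Delta'$ is nonzero, adjunction to one of its components together with Lemma~\ref{l surface 3L} does dispose of this case. So the real problem is a $\Q$-factorial klt Fano threefold of Picard number one.

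There you assert that a minimal rational curve satisfies $-K\cdot C\le 4$. No such bound is available for singular Fano threefolds in characteristic $p$:
\begin{itemize}
\item Mori's bound $\dim X+1$ comes from the deformation theory of rational curves and needs $X$ to be smooth along the curve.
\item Bend-and-break on a resolution, or the cone theorem, only gives $-K_X\cdot C\le 2\dim X=6$. The discrepancy term $\sum a_iE_i\cdot\tilde C$ relating $-K_X\cdot C$ to $-K_{\tilde X}\cdot\tilde C$ is uncontrolled.
\item In characteristic zero the index bound for singular Fanos uses Kobayashi--Ochiai and Kawamata--Viehweg vanishing, which is exactly what is missing here.
\end{itemize}
A sanity check: if both of your claimed bounds held, your argument would prove that $K_X+4L$ is pseudo-effective, which the authors describe as only conjectural. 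With the bound $6$ that is actually known, your strategy yields only $K_X+6L$.

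The missing idea is Proposition~\ref{p key Fujita}, which replaces any threefold length bound better than $6$ by a multiplicity argument. Set $A:=-(K_X+5L)$. Since $(2L+\tfrac12A)^3>8L^3\ge 8$, Riemann--Roch as in Shokurov's non-vanishing gives $D\sim_\Q 2L+\tfrac12A$ with $\mult_PD>2$ at a smooth point $P$. After adjusting by the log canonical threshold, one of two things happens.
\begin{itemize}
\item $(X,D)$ is dlt but not klt. Adjunction to a component $S$ of $\rdown{D}$ gives $K_S+B_S+(3L+(1-t)A)|_S\equiv 0$, contradicting Lemma~\ref{l surface 3L}.
\item Otherwise, one blows up $P$ (this is where $\mult_PD>2$ is used) or takes a dlt modification. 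This gives a $\Q$-factorial dlt pair $(X',\Delta')$ with $\rho(X')>1$ and $K_{X'}+\Delta'+sf^*L\equiv 0$ for some $s>3$. A $(K_{X'}+\Delta'+3f^*L)$-negative extremal ray is then $f^*L$-positive with non-trivial contraction. Proposition~\ref{p 3L MMP step} rules this out in the birational case, and Lemma~\ref{l easy MFS} in the fibre-type case.
\end{itemize}
Beyond $L^3\ge 1$, only the surface constant $3$ is ever needed.

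There are also two smaller gaps.

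First, the ``length $\le 4$ for birational rays'' you invoke is not an off-the-shelf result. What suffices, and what the paper proves, is Proposition~\ref{p 3L MMP step}: a birational $(K+\Delta+3L)$-negative ray is $L$-trivial. The proof decomposes the contraction into pl-contractions, carries the nef Cartier divisor through the intermediate models, and uses adjunction to a plt centre. Cartier, not merely $\Q$-Cartier, descent via Theorem~\ref{t nume triv descent} is essential there. Your fallback of allowing $M\cdot C\in\frac1r\Z$ would destroy the integrality $M\cdot C\ge 1$ on which all your numerics rest.

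Second, the theorem is stated over an arbitrary field. You need the reduction to $\overline{\kappa}$ via the normalised reduced base change $f\colon Y\to X$ with $K_Y+C\sim f^*K_X$, $C\ge 0$, and you do not address this.
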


\subsection{Overview of the proof of Theorem \ref{intro nu=2}}

Let $X$ and $L$ be as in Theorem \ref{intro nu=2}. 
By Theorem \ref{intro Mumford}, we may assume $\nu(X, L)=2$. 
Suppose that $H^1(X, -L) \neq 0$. 
After possibly replacing $L$ by $p^eL$ for some integer $e \geq 0$, 
the same argument as in Theorem \ref{intro Mumford} 
allows us to construct a finite inseparable surjective morphism 
\[
\psi: Y \to X 
\]
of degree $p$ and an effective Weil divisor $D$ on $Y$ such that 
$Y$ is a projective normal threefold and 
\[
K_Y + D + (p-1)L_Y \sim \psi^*K_X 
\]
for  $L_Y := \psi^*L$. 
If $K_Y + 5L_Y$ is pseudo-effective, then this linear equivalence 
contradicts the assumption that $-K_X$ is nef. 
Take a desingularisation $\mu : Y' \to Y$ and run a $(K_{Y'} + 5\mu^*L_Y)$-MMP: $Y' \dashrightarrow Y''$. 
For simplicity, we assume that $Y = Y' = Y''$. 
Then there exists a $(K_Y + 5L_Y)$-negative Mori fibre space $\rho : Y \to T$, which is also $K_Y$-negative. 

The remaining proof consists of the following four steps:
\begin{enumerate}
\item $L$ is EWM. 
\item For the contraction $f: X \to S$ induced by $L$, 
$-K_X$ is $f$-nef and $f$-big. 
\item $L$ is semi-ample and descends to an ample Cartier divisor $L_S$ on $S$. 
\item $H^1(X, -L) = 0$. 
\end{enumerate}

(1) We say that a nef Cartier divisor $L$ is {\em EWM} if there exists a morphism 
$\pi: X \to S$ to a proper algebraic space $S$ with $\pi_*\MO_X = \MO_S$ such that, for any integral closed subscheme $V \subset X$, we have $\dim V > \dim \pi(V)$ if and only if 
$L^{\dim V} \cdot V = 0$. 
This notion, introduced by Keel in \cite{Kee99}, 
is similar to but weaker than semi-ampleness. 

Since $\rho : Y \to T$ is a $(K_Y+5L_Y)$-Mori fibre space,
we  obtain $L_Y \sim \rho^*L_T$ for some nef and big Cartier divisor $L_T$ on $T$. 
Hence $L_T$ is EWM, and therefore so are $L_Y (= \psi^*L)$ and $L$. 

(2) 
Note that $S$ is a two-dimensional normal irreducible proper algebraic space. 
In particular, after removing finitely many closed points, 
$S$ becomes a scheme, hence a normal surface. 
Since $-K_X$ is nef, it is automatically $f$-nef. 
It suffices to show that general fibres of $f$ are isomorphic to $\mathbb{P}^1$. 
Otherwise, general fibres would be smooth elliptic curves 
since $p > 3$ and $-K_X$ is $f$-nef. 

On the other hand, let $\pi_Y : Y \to S_Y$ be the contraction induced by the EWM nef Cartier divisor $\psi^*L$. 
Then general fibres of $\pi_Y$ are isomorphic to $\mathbb{P}^1$, since they coincide with general fibres of 
the $K_Y$-Mori fibre space $\rho : Y \to T$. 
Hence we can find a purely inseparable cover 
$X_s \to Y_t$ between 
a general fibre $X_s$ of $\pi$ (a smooth elliptic curve) 
and a general fibre $Y_t =\P^1$ of $\pi_Y$, which is absurd. 

(3) 
In order to show that $L$ is semi-ample, it suffices to prove that a suitable multiple of $L$ descends to $S$ after  taking the base change by an \'etale cover $S' \to S$. 
This follows from (2) and the relative base point free theorem. 
In particular, $S$ is a projective normal surface. 
Again by the relative base point free theorem, $L$ descends to $S$. 

(4) We have an exact sequence 
\[
0 \to H^1(S, \mathcal{O}_S(-L_S)) \to H^1(X, \mathcal{O}_X(-L)) \to H^0(S, R^1f_*\mathcal{O}_X(-L))
\]
arising from the corresponding Leray spectral sequence. 
By (2), we have 
$R^1f_*\mathcal{O}_X(-L) =0$. 
Hence it suffices to show that $H^1(S, \mathcal{O}_S(-L_S)) = 0$. 
Since $L_S$ is nef and big, it is enough to show that $K_S$ is not pseudo-effective, 
which follows from \cite{Eji19} and the nefness of $-K_X$.

\subsection{Overview of the proof of Theorem \ref{intro Fujita}}

By standard argument, we may assume that the base field is algebraically closed and $X$ is $\Q$-factorial and terminal. 
Suppose that $K_X+5L$ is not pseudo-effective. 
Running a $(K_X+5L)$-MMP, the problem is reduced to the case when 
there is a $(K_X+5L)$-Mori fibre space $\pi : X \to B$. 
If $\dim B>0$, then it is easy to derive a contradiction by taking the generic fibre of $\pi$. 
Assume that $\dim B=0$, i.e., 
$\rho(X)=1$ and $-K_X$ is ample. 
In this case, $L$ is ample. 

Set $A := -(K_X+5L)$, which is an ample Weil divisor. 
Fix a smooth point $P \in X$. 
By the same argument as in Shokurov's non-vanishing theorem \cite[\S 3.5]{KM98}, we can find an effective $\Q$-divisor $E$ on $X$ satisfying $E \sim_{\Q} 2L + (1/2)A$ and $\mult_P E >2$. 
Taking the log canonical threshold $\lambda$ of $(X, E)$, one of the following holds: 
\begin{enumerate}
\item[(I)] $(X, E)$ is klt and $\mult_P E >2$. 
\item[(II)] $(X, \lambda E)$ is lc but not dlt for some $0 < \lambda \leq 1$. 
\item[(III)] $(X, \lambda E)$ dlt but not klt for some $0 < \lambda \leq 1$. 
\end{enumerate}
For the the case (III), we shall derive a contradiction by applying  the adjunction for a prime divisor contained $S$ in $\rdown{\lambda E}$. 
For the cases (I) and (II), we take a suitable birational modification $f : X' \to X$ such that $(X', \Delta')$ is a $\Q$-factorial dlt pair and $K_{X'}+ \Delta' +sf^*L \equiv 0$ 
for some rational number $s>3$. 
Such a birational morphism $f$ is obtained by taking a blowup at $P$ 
(resp.\ a dlt modification of $(X, \lambda E)$) for the case (I) (resp.\ (II)). 
Since $K_{X'}+ \Delta' +(s-\epsilon)f^*L$ is not nef for $0 < \epsilon \ll 1$, 
we can find a $(K_{X'}+ \Delta' +(s-\epsilon)f^*L)$-negative extremal ray $R$. 
For its contraction $g:X' \to X''$,  we shall derive a contradiction by using the fact that $f^*L$ is $g$-ample and 
$s-\epsilon >3$. 
Essentially, this final step is also reduced to the lower dimensional case by 
using Shokurov's pl-MMP for the birational case and 
taking the generic fibre for the Mori-fibre-space case. 

\begin{rem}
By using the well-known bound $6 (=2 \dim X)$ of the length of extremal rays (cf.\ \cite[Theorem 1.1]{BW17}), 
we can easily prove that $K_X+6L$ is pseudo-effective under the same assumption as in 
Theorem \ref{intro Fujita}. 
Although our result (Theorem \ref{intro Fujita}) is slightly better, 
it is conjecturally expected that $K_X + 4L$ is pseudo-effective. 
\end{rem}


\medskip
\noindent {\bf Acknowledgements.}
Kawakami was supported by JSPS KAKENHI Grant number JP24K16897 and by Inamori Foundation.
Tanaka was supported by JSPS KAKENHI Grant numbers JP22H01112 and JP23K03028.

\section{Preliminaries}\label{s prelim}

\subsection{Notation}
\label{ss:notation}

In this subsection, we summarise notation and basic definitions used in this article. 
\begin{enumerate}
\item Throughout the paper, $p$ denotes a prime number and we set $\F_p \coloneqq \Z/p\Z$. 
Unless otherwise specified, we work over an algebraically closed field $k$ of characteristic $p>0$. 
\item We say that $X$ is a {\em variety} (over a field $\kappa$) if 
    $X$ is an integral scheme 
    that is separated and of finite type over $\kappa$. 
    We say that $X$ is a {\em curve} (resp. {\em surface}, resp. {\em threefold}) 
    if $X$ is a variety of dimension one (resp. two, resp. three). 
\item For a variety $X$, 
we define the {\em function field} $K(X)$ of $X$ 
as the stalk $\MO_{X, \xi}$ at the generic point $\xi$ of $X$. 
\item 
For the definition of singularities in minimal model program (klt, lc, etc), we refer to \cite{KM98} and \cite{Kol13}. 
\end{enumerate}

\subsection{Birational descent for numerically trivial divisors}

\begin{lem}\label{l surface 3L}
Let $(S, \Delta_S)$ be a two-dimensional projective lc pair. 
Let $g: S \to T$ be a morphism to a projective variety $T$. 
Take a $g$-ample Cartier divisor $L$ on $S$. 
Then $K_S+\Delta_S +3L$ is  $g$-nef. 
\end{lem}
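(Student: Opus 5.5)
We prove the statement by contradiction, using the cone theorem for lc surfaces together with the bound on the length of extremal rays. Suppose $K_S+\Delta_S+3L$ is not $g$-nef. Then there is a curve $C\subset S$ contracted by $g$ with $(K_S+\Delta_S+3L)\cdot C<0$. Since $L$ is $g$-ample, we have $L\cdot C>0$, so $(K_S+\Delta_S)\cdot C<0$ as well.

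The relative cone theorem holds for two-dimensional lc pairs in arbitrary characteristic (Tanaka's MMP for lc surfaces, or Fujino's cone theorem for log surfaces). It gives
\[
\overline{\NE}(S/T)=\overline{\NE}(S/T)_{K_S+\Delta_S\ge 0}+\sum_i \R_{\ge0}[C_i],
\]
where the $K_S+\Delta_S$-negative extremal rays are spanned by $g$-contracted rational curves $C_i$ with
\[
0<-(K_S+\Delta_S)\cdot C_i\le 3 .
\]
This is the length bound $2\dim S-1=3$, which in the surface case is Tanaka's bound (and in fact $\le 3$ even for lc surfaces).

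Next, decompose $[C]$ in this cone. The divisor $K_S+\Delta_S+3L$ is nonnegative on $\overline{\NE}(S/T)_{K_S+\Delta_S\ge0}$: that part has $K_S+\Delta_S\ge0$, and $L$ is $g$-ample, hence $L\ge 0$ on $\overline{\NE}(S/T)$ by Kleiman's criterion in the relative setting. So it suffices to check $(K_S+\Delta_S+3L)\cdot C_i\ge0$ for each extremal curve. Because $L$ is Cartier and $g$-ample, $L\cdot C_i$ is a positive integer, so $L\cdot C_i\ge1$. Combined with the length bound, this gives
\[
(K_S+\Delta_S+3L)\cdot C_i\ge -3+3=0 .
\]
Hence $K_S+\Delta_S+3L$ is nonnegative on all of $\overline{\NE}(S/T)$, contradicting the choice of $C$.

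The main obstacle is justifying the relative cone theorem with the length bound $3$ for lc (not necessarily $\Q$-factorial, possibly non-$\Q$-Cartier $K_S+\Delta_S$) surface pairs over a morphism in positive characteristic. Note first that in the lc surface setting $K_S+\Delta_S$ is $\R$-Cartier by definition of the pair. If needed, I would pass to the minimal resolution $\mu:S'\to S$, writing $K_{S'}+\Delta_{S'}=\mu^*(K_S+\Delta_S)$ with $\Delta_{S'}$ effective after discarding negative coefficients, which only decreases intersections with non-exceptional curves. I would then apply the cone theorem on the smooth surface $S'$, where the length bound $-(K_{S'}+\Delta_{S'})\cdot C\le 3$ for extremal rational curves follows from the classification of extremal contractions: $\P^2$ gives $3$, a conic-bundle fibre gives $2$, and a $(-1)$-curve gives $1$. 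Finally I would push the resulting curves forward to $S$ and use $\mu^*L\cdot C'=L\cdot\mu_*C'$ to transfer the integrality of $L$-degrees.
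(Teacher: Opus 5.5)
Your argument is correct and is in substance the paper's. The paper gives no proof beyond citing \cite[Proposition 6.3]{Tan14}, and deducing the statement from the cone theorem for lc surfaces, with length bound $0<-(K_S+\Delta_S)\cdot C_i\le 3$, together with $L\cdot C_i\ge 1$ for $g$-contracted $C_i$, is the standard proof of that result. You can even avoid the relative cone theorem by replacing $L$ with the ample divisor $L+mg^*H$, where $H$ is ample on $T$ and $m\gg0$.

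The minimal-resolution fallback is unnecessary, and as written it contains two errors.
\begin{enumerate}
\item On the minimal resolution $\Delta_{S'}$ is automatically effective. Discarding negative coefficients would increase, not decrease, intersections with non-exceptional curves. It would also destroy the $\mu$-triviality of $K_{S'}+\Delta_{S'}$, which is exactly what rules out $\mu$-exceptional extremal curves $C'$ with $\mu^*L\cdot C'=0$.
\item Mori's classification covers only $K_{S'}$-negative rays. Rays spanned by curves $C\subset\Supp\Delta_{S'}$ with $C^2<0$ need the separate adjunction bound $-(K_{S'}+\Delta_{S'})\cdot C\le 2$.
\end{enumerate}
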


\begin{proof}
See \cite[Proposition 6.3]{Tan14}. 
\end{proof}

\begin{prop}\label{p surface Fujita imperf}
Let $\kappa$ be a field. 
Let $S$ be a projective normal surface over $\kappa$ such that $K_S$ is $\Q$-Cartier. 
Take a nef and big Cartier divisor $L$. 
Then $K_S+ 3L$ is pseudo-efffective. 
\end{prop}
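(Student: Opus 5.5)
We reduce to a minimal resolution, run a $(K+3L)$-MMP, and use Lemma \ref{l surface 3L} to rule out the only way the MMP could end. Throughout we may replace $\kappa$ by $H^0(S,\MO_S)$, so that $S$ is geometrically connected over $\kappa$. We argue by contradiction: suppose $K_S+3L$ is not pseudo-effective.

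\textbf{Step 1: reduce to a regular surface.} Take the minimal resolution $\mu\colon \wt S\to S$, which exists for excellent surfaces over any field. Write $K_{\wt S}+E=\mu^*K_S$, where $E$ is $\mu$-exceptional. By the negativity lemma and minimality, $K_{\wt S}$ is $\mu$-nef, so $E\geq 0$. It follows that $K_{\wt S}+3\mu^*L\leq \mu^*(K_S+3L)$, so $K_{\wt S}+3\mu^*L$ is also not pseudo-effective. Conversely, if $K_{\wt S}+3\mu^*L$ is pseudo-effective, then so is its pushforward $K_S+3L$. Moreover, $\mu^*L$ is nef, big and Cartier. Hence we may assume $S$ is regular, in particular $\Q$-factorial and klt, so the pair $(S,0)$ is lc.

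\textbf{Step 2: run the MMP.} Run a $(K_S+3L)$-MMP for regular surfaces over $\kappa$, which is available by Tanaka's MMP for excellent surfaces. A $(K_S+3L)$-negative extremal ray $R$ satisfies $K_S\cdot R<0$ because $L$ is nef, so each step is also a $K_S$-negative contraction. A priori the divisor $L$ need not descend, so it is better to work with the pair $(S,3L)$ directly, using $L$ nef.

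The cleanest route is to show directly that no $(K_S+3L)$-negative extremal ray can exist. Let $R$ be such a ray, with contraction $g\colon S\to T$. Consider the following cases.

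\emph{$L\cdot R>0$.} Then $L$ is $g$-ample, since $R$ is the only curve class contracted. By Lemma \ref{l surface 3L} applied to $(S,0)$ and $g$, the divisor $K_S+3L$ is $g$-nef, contradicting $(K_S+3L)\cdot R<0$.

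\emph{$L\cdot R=0$.} Then $K_S\cdot R<0$.
\begin{itemize}
\item If $g$ is birational, $R$ is spanned by a $K_S$-negative exceptional curve $C$ (a $(-1)$-curve over the residue field). Since $L\cdot C=0$, the divisor $L$ descends to a nef and big Cartier divisor $L_T$ on the regular surface $T$, with $L=g^*L_T$. Then $K_S+3L=g^*(K_T+3L_T)+aC$ with $a>0$. If $K_T+3L_T$ were pseudo-effective, so would be $K_S+3L$. So $K_T+3L_T$ is not pseudo-effective, and we replace $S$ by $T$; this reduction terminates because the Picard number drops.
\item If $g$ is not birational, it is a Mori fibre space. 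If $\dim T=0$, then $\rho(S)=1$, so $L\equiv 0$, contradicting bigness. If $\dim T=1$, then $L$ is numerically trivial on the fibres, so $L^2=0$, again contradicting bigness.
\end{itemize}

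\textbf{Step 3: conclude.} After finitely many birational steps, $K_S+3L$ is nef, hence pseudo-effective. This contradicts the standing assumption, which proves the statement.

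The main obstacle is the case $L\cdot R=0$ in the birational situation. There one needs that $L$ descends to a Cartier divisor on $T$. This follows from the cone and contraction theorems for regular excellent surfaces, which give $L\sim g^*L_T$ for $L$ Cartier and $g$-numerically trivial. One also needs the comparison $K_S+3L\geq g^*(K_T+3L_T)$, which is where $K_S\cdot C<0$ is used.
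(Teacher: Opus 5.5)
Your argument is correct when $\kappa$ is algebraically closed. In that case it differs from the paper only in presentation. You apply Lemma~\ref{l surface 3L} ray by ray, whereas the paper notes that the $(K_S+3L)$-MMP only contracts $(-1)$-curves and ends in a $\P^1$-bundle or $\P^2$, on which $3L$ is relatively ample.

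The gap is that you never leave the original field. After replacing $\kappa$ by $H^0(S,\MO_S)$, you work with a regular surface over an arbitrary, possibly imperfect, field. In the case $L\cdot R>0$ you then invoke Lemma~\ref{l surface 3L}. That lemma, i.e.\ \cite[Proposition 6.3]{Tan14}, is a statement over an algebraically closed field; in the paper it is only applied to surfaces lying in threefolds over $k$. So it cannot be quoted here. This is not a technicality: the proposition is needed precisely over an imperfect field, namely for the generic fibre of a Mori fibre space over a curve in Lemma~\ref{l easy MFS}.

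Two of your subcases can be repaired by hand. If $g$ is birational, adjunction on the contracted curve $C$ gives $K_S\cdot C=C^2=-d_C$ with $d_C=\dim_\kappa H^0(C,\MO_C)$, while $L\cdot C$ is a positive multiple of $d_C$. If $\dim T=1$, the generic fibre is a conic over $K(T)$, and $K+3L$ has degree at least $1$ on it.

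The case where $T$ is a point is not covered. Then $S$ is a regular del Pezzo surface of Picard rank one over $\kappa$, and you need $-K_S\equiv cL$ with $c\leq 3$. Over an imperfect field $S$ need not be geometrically normal. So this bound cannot be read off from the classification of smooth del Pezzo surfaces after base change, and nothing in your proof provides it.

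The missing step is the paper's first reduction. Let $f\colon Y\to S$ be the normalisation of $(S_{\ol{\kappa}})_{\red}$. By \cite[Theorem 1.1]{Tan18b}, $K_Y+\Gamma\sim f^*K_S$ for some $\Gamma\geq 0$. Since $f^*L$ is nef, big and Cartier, the algebraically closed case shows that $K_Y+3f^*L$ is pseudo-effective. Hence so is $f^*(K_S+3L)=K_Y+\Gamma+3f^*L$, and therefore so is $K_S+3L$. With this reduction in place, your argument goes through.
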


\begin{proof}
By \cite[Theorem 1.1]{Tan18b}, we may assume that $\kappa$ is an algebraically closed field. 
Taking the minimal resolution, we may assume that $S$ is smooth. 

Suppose that $K_S+ 3L$ is not pseudo-effective. 
We run a $(K_S+3L)$-MMP: 
\[
S =: S_0 \to S_1 \to \cdots \to S_{\ell}, 
\]
where the end result $S_{\ell}$ has a Mori fibre space $\pi : S_{\ell} \to B$ such that $(K_{S_{\ell}} + 3L_{S_{\ell}})$ is $\pi$-anti-ample. 
Note that this sequence is a $K_S$-MMP, and hence each $S_i \to S_{i+1}$ is a contraction of a $(-1)$-curve. 
Moreover, either  $\pi : S_{\ell} \to B$ is a $\P^1$-bundle or 
$S_{\ell} = \P^2$. 
In any case, $(K_{S_{\ell}} + 3L_{S_{\ell}})$ cannot be $\pi$-anti-ample, 
because $3L_{S_{\ell}}$ is $\pi$-ample. 
\end{proof}

\begin{thm}\label{t nume triv descent}
Assume $p>5$. 
Let $f: X \to Y$ be a projective morphism between quasi-projective varieties with $f_*\MO_X = \MO_Y$ and $\dim Y>0$. 
Assume that there exists an effective $\Q$-divisor $\Delta$ on $X$ such that 
\begin{enumerate}
\item $(X, \Delta)$ is a three-dimensional klt pair, and 
\item $-(K_X+\Delta)$ is $f$-nef and $f$-big. 
\end{enumerate}
Take a Cartier divisor $L$ on $X$ satisfying $L \equiv_f 0$. 
Then there exists a Cartier divisor $M$ on $Y$ such that $L \sim f^*M$.
\end{thm}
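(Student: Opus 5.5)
The plan is to combine the relative base point free theorem for threefolds in characteristic $p>5$ with a vanishing of $R^1f_*$ that makes $L$ locally trivial along the fibres. The key point is that $-(K_X+\Delta)$ is $f$-nef and $f$-big. Hence for any $f$-numerically trivial Cartier divisor $L$, the divisor $L-(K_X+\Delta)$ is $f$-nef and $f$-big. The same holds for $L$ replaced by $nL$ with $n\in\Z$, since $nL\equiv_f 0$.

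\textbf{Step 1: relative semi-ampleness of $\pm L$.} I would apply the relative base point free theorem for three-dimensional klt pairs in characteristic $p>5$ (Birkar, Birkar--Waldron, Xu et al.) to $L$ and to $-L$. This is where the hypothesis $p>5$ enters. It shows that both $L$ and $-L$ are $f$-semi-ample. In fact I would aim for the stronger statement that $nL$ is $f$-base point free for all $n\gg 0$, and likewise $-nL$.

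Since $L\equiv_f 0$, the morphism over $Y$ induced by $|mL|$ contracts every fibre, and $f_*\MO_X=\MO_Y$. It follows that for $m\gg0$ we have $mL\sim f^*M_m$ with $M_m$ Cartier, namely $M_m=f_*\MO_X(mL)$, which is invertible. The same holds for $m+1$. Therefore
\[
L\sim (m+1)L-mL\sim f^*(M_{m+1}-M_m).
\]

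\textbf{Step 2: the subtle point.} The basepoint-free theorem is usually stated for $L$ nef over $Y$ with $L-(K_X+\Delta)$ nef and big over $Y$, giving freeness of $mL$ for all $m\gg0$. That is exactly what Step 1 needs, so the approach goes through provided that version is available in characteristic $p>5$ with klt $\Delta$ and quasi-projective $Y$.

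If only the semi-ampleness version is available, I would argue as follows. From semi-ampleness of $L$ we obtain $aL\sim f^*M_a$ for some $a$. To get freeness for consecutive integers $m$ and $m+1$, I would use the freeness criterion for $bL$ with $b$ coprime to $a$, as in the classical proof of the basepoint-free theorem, which yields $bL\sim f^*M_b$. Bezout then finishes.

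Alternatively, we can localise: it suffices to show $f_*\MO_X(L)$ is invertible and $f^*f_*\MO_X(L)\to\MO_X(L)$ is surjective near each point $y\in Y$. Over a small neighbourhood of $y$, $L$ is torsion-related to a pullback, and one checks that $L|_{f^{-1}(y)}$ is trivial. For that I would use that $(X,\Delta)$ klt with $-(K_X+\Delta)$ $f$-big makes the fibres rationally chain connected. Then $\Pic$ of the fibre has no torsion obstruction, using $R^1f_*\MO_X=0$, which holds by relative Kawamata--Viehweg vanishing for such threefold contractions in $p>5$. I expect this to be the main obstacle: ensuring the required relative vanishing/freeness results hold in characteristic $p>5$ in the generality needed (the case $\dim Y\in\{1,2\}$, Mori-fibre-type morphisms). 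I would first try to cite the relative basepoint-free theorem directly, which handles all cases at once.
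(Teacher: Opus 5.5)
Your closing deduction is correct: once $mL$ and $(m+1)L$ are both $f$-free, rigidity and $f_*\MO_X=\MO_Y$ give $L\sim f^*(M_{m+1}-M_m)$. The gap is in the input to that deduction.

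For an $f$-numerically trivial Cartier divisor, the statement ``$mL$ is $f$-free for all $m\gg 0$'' is \emph{equivalent} to the conclusion $L\sim f^*M$. Indeed, if $L\sim f^*M$ then $f^*f_*\MO_X(mL)\to\MO_X(mL)$ is an isomorphism for every $m$. So Step~1 restates the theorem unless you have a characteristic-$p$ source for that strong form.

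The three-dimensional base point free theorems of Birkar, Birkar--Waldron and Xu that you name only give that \emph{some} multiple of $L$ is $f$-free. For $L\equiv_f 0$ this says only that $aL\sim f^*M_a$ for some $a\geq 1$, i.e.\ $L$ is torsion relative to $Y$, which is exactly what has to be ruled out.

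Your first fallback does not repair this. The classical argument that $|q^nL|$ is free for every prime $q$ and $n\gg 0$ \cite[Theorem 3.3]{KM98} lifts sections using Kawamata--Viehweg vanishing, which is unavailable in characteristic $p$. Nothing in a semi-ampleness statement distinguishes $L$ from $aL$.

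Your second fallback names the right ingredient: a vanishing $R^1f_*\MO_X=0$ for such contractions, which is where Kawamata--Viehweg vanishing for log del Pezzo surfaces enters when $p>5$. But the passage from it to $L\sim f^*M$ is only gestured at, for two reasons:
\begin{enumerate}
\item Rational chain connectedness of the fibres gives no control over torsion in the Picard group of a singular or non-reduced fibre. On a non-reduced fibre, $H^1$ of the conormal sheaves can contribute numerically trivial line bundles forming a $p$-torsion group.
\item You still need a formal-functions argument on all thickenings of $f^{-1}(y)$ to pass from the fibre to a neighbourhood, which is delicate when the fibres are surfaces.
\end{enumerate}

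This is precisely what the paper imports rather than reproves. Its proof consists of citing Bernasconi's base point free theorem \cite[Theorem 1.3 and Remark 1.4]{Ber21}, which yields the integral descent, together with \cite[Theorem 1.1]{ABL22}, which supplies the vanishing it needs when $p>5$. With that citation in place of Step~1, your argument becomes the paper's. As written, though, the one step where $p>5$ genuinely enters is left unproved and attributed to results that only give the statement up to torsion.
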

\begin{proof}
The assertion follows from \cite[Theorem 1.3 and Remark 1.4]{Ber21} 
and  \cite[Theorem 1.1]{ABL22}. 
\end{proof}

\begin{prop}\label{p birat descent}
Assume $p>5$. 
Let $(Z, \Delta_Z)$ be a three-dimensional quasi-projective klt pair. 
Let $f: X \to Z$ be a projective birational morphism from a normal threefold $X$. 
Take a Cartier divisor $L$ on $X$ satisfying $L \equiv_f 0$. 
Then $L \sim f^*L_Z$ for some Cartier divisor on $Z$. 
\end{prop}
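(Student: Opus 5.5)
The plan is to reduce Proposition \ref{p birat descent} to Theorem \ref{t nume triv descent}. The obstacle is that $X$ need not carry a boundary making $-(K_X+\Delta)$ relatively nef and big. We get around this by passing to a suitable model over $Z$ on which such a boundary exists, and then descending in two stages.

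\textbf{Reductions.} The question is local on $Z$, but we keep $Z$ quasi-projective as it is. We also want to avoid any need for $X$ to be klt. So first replace $X$ by a resolution $h\colon W\to X$ that is projective over $Z$; here we use resolution of threefolds in characteristic $p>5$. Then $h^*L\equiv_{f\circ h}0$. Once we show $h^*L\sim (f\circ h)^*L_Z$, we get $L\sim f^*L_Z$: indeed $h_*\MO_W=\MO_X$, so $L-f^*L_Z$ is a Cartier divisor whose pullback is trivial, and the projection formula forces it to be trivial on $X$. Hence we may assume $X$ is regular, and in particular $\Q$-factorial.

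\textbf{Building the boundary.} Write $K_X+\Delta_X=f^*(K_Z+\Delta_Z)+\sum a_iE_i$, where $\Delta_X$ is the strict transform and the $E_i$ are exceptional with $a_i>-1$. Choose an $f$-ample divisor of the form $-\sum c_iE_i$ with $c_i\ge 0$; this is possible after replacing $X$ by a further blowup so that $f$ is a blowup of an ideal, using that $Z$ is quasi-projective. Then set
\[
\Delta:=\Delta_X+\sum b_iE_i,\qquad b_i:=\max(0,-a_i)+\epsilon c_i\ (<1),
\]
choosing it so that $K_X+\Delta\equiv_f \sum d_iE_i$ with $d_i\ge 0$ and the $\epsilon$-part ample. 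Since the pair is klt, we can run a $(K_X+\Delta)$-MMP over $Z$ in characteristic $p>5$ (the Hacon--Xu and Birkar MMP for threefolds).

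The cleaner route is probably to avoid the full MMP and instead apply Theorem \ref{t nume triv descent} directly on an intermediate model. Take $g\colon Y\to Z$ to be a small $\Q$-factorialisation, or, if $Z$ is $\Q$-factorial, $Y=Z$. Alternatively, use the standard trick: choose $\Delta'\ge 0$ on $X$ with $(X,\Delta')$ klt and $-(K_X+\Delta')$ $f$-ample. This is possible because $X$ is regular and $f$ is birational, so any divisor is $f$-big. Concretely, take $\Delta'=\Delta_X^{\ge0\text{-adjusted}}+\sum b_iE_i+\delta H$ with $-(K_X+\Delta')\equiv_f -\epsilon\sum c_iE_i$, which is $f$-ample. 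This requires matching coefficients $b_i=-a_i-\epsilon c_i$. Those coefficients can be negative, and that is the main obstacle, since they must lie in $[0,1)$.

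\textbf{Fixing negative coefficients.} If some coefficients come out negative, I would first replace $X$ by the output of a $(K_X+\Delta_X+\sum E_i)$-type MMP over $Z$, namely a dlt modification, so that all exceptional divisors have $a_i\le 0$. Then $b_i\in[0,1)$ for small $\epsilon$. Numerical triviality and descent of $L$ are preserved through each step of the MMP, since each step is a $K$-negative contraction or flip and we can apply Theorem \ref{t nume triv descent} to each contraction. So the key steps, in order, are:
\begin{enumerate}
\item construct a model $X'\to Z$ on which all discrepancies are non-positive, and compare $L$ on $X$ with $L$ on $X'$ using the resolution argument above;
\item on $X'$, produce a klt $\Delta'$ with $-(K_{X'}+\Delta')$ $f$-ample;
\item apply Theorem \ref{t nume triv descent} (here $\dim Z>0$) to conclude $L\sim f^*M$, with $M=L_Z$.
\end{enumerate}
The delicate point is step 1: it requires that the MMP over $Z$ terminates, and that $L$ stays Cartier and numerically trivial along the way.
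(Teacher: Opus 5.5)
Your underlying strategy is the paper's. You pass to a log resolution and run an MMP over $Z$ for a klt boundary whose log canonical divisor is $\equiv_Z$ a positive combination of \emph{all} exceptional divisors. You then apply Theorem~\ref{t nume triv descent} at each extremal contraction to keep $L$ Cartier, and once more on the final model. Your ``Building the boundary'' paragraph, with $b_i=\max(0,-a_i)+\epsilon c_i$, is already a correct instance of this. The paper simply takes $\Delta=(1-\epsilon)E+f^{-1}_*\Delta_Z$, so that $K_X+\Delta\equiv_Z\sum(a_i+1-\epsilon)E_i$ has positive coefficients by the klt hypothesis, and no auxiliary $f$-ample $-\sum c_iE_i$ is needed.

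However, the three-step plan you finally settle on does not work as written. In step~2 you need exceptional divisors on $X'$. But $K_X+\Delta_X+\sum E_i\equiv_Z\sum(a_i+1)E_i$ with every $a_i+1>0$, so the negativity lemma forces the MMP of step~1 to contract every $E_i$. Thus $X'\to Z$ is small, the condition ``$a_i\le 0$'' is vacuous, and there is no $-\sum c_iE_i$ from which to build $\Delta'$. Your formula $b_i=-a_i-\epsilon c_i$ also has the wrong sign: it makes $K_X+\Delta'$, not $-(K_X+\Delta')$, relatively ample. In any case step~2 is unnecessary. For the small morphism $g\colon X'\to Z$ one has $K_{X'}+\Delta_{X'}=g^*(K_Z+\Delta_Z)$, so $(X',\Delta_{X'})$ is klt. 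Moreover $-(K_{X'}+\Delta_{X'})\equiv_g 0$ is $g$-nef and $g$-big, since every divisor is big over a birational base. Theorem~\ref{t nume triv descent} asks for nothing more, and this is exactly how the paper concludes.

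Second, your justification in step~1 that ``each step is a $K$-negative contraction'' is not right. Steps of a $(K_X+\Delta_X+\sum E_i)$-MMP need not be $K$-negative, and that pair is only dlt, so Theorem~\ref{t nume triv descent} does not apply verbatim. For each contraction $\varphi_j\colon X_j\to W_j$ you must exhibit a klt boundary $\Delta_j$ with $-(K_{X_j}+\Delta_j)$ $\varphi_j$-ample. For example, replace the reduced exceptional part by $(1-\epsilon)$ times it, using $\rho(X_j/W_j)=1$; more simply, run the MMP for a klt boundary from the outset, as the paper does.

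The induction then goes as follows. We have $L_j\sim\varphi_j^*M_j$ for a Cartier divisor $M_j$ on $W_j$. Hence, for the induced $\psi_j\colon X_{j+1}\to W_j$, the divisor $L_{j+1}=\psi_j^*M_j$ is Cartier and $\equiv_Z 0$. The pullbacks of $L_j$ and $L_{j+1}$ to a common resolution coincide. Together with your projection-formula argument, this turns $L_{X'}\sim g^*L_Z$ into $L\sim f^*L_Z$.
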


\begin{proof}
Taking a   log resolution of $(Z, \Delta_Z)$ dominating $X$, 
we may assume that $f: X\to Z$ is a log resolution of $(Z, \Delta_Z)$. 
In particular, $\Ex(f) = \bigcup_{i \in I} E_i$ is a simple normal crossing divisor. 
Set $E := \sum_{i \in I} E_i$, which is the reduced $f$-exceptional divisor satisfying $\Supp E = \Ex(f)$. 
For a sufficiently small rational number $0 < \epsilon \ll 1$ and $\Delta := (1-\epsilon)E + f_*^{-1}\Delta_Z$, 
we can write 
\[
K_X  + \Delta \equiv f^*(K_Z+\Delta_Z) + \sum_{i \in I} b_i E_i 
\]
for some $b_i \in \Q_{>0}$. 
Note that  $(X, \Delta)$ is klt. 
We run a $(K_X + \Delta )$-MMP over $Z$: 
\[
X =: X_0 \dashrightarrow X_1 \dashrightarrow \cdots \dashrightarrow X_{\ell} =:Y. 
\]

We now prove that the push-forward $L_Y$ of $L$ on $Y$ is Cartier. 
For each $j$, let $\Delta_j$ and $L_j$ be the push-forwards of 
$\Delta$ and $L$ on $X_j$, respectively. 
For the associated birational contraction $X_j \to W_j$, we have the following birational morphisms: 
\[
\begin{tikzcd}
X_j \arrow[rd, "\varphi_j"']\arrow[rr, dashrightarrow] & & X_{j+1}\arrow[ld, "\psi_j"]\\
& W_j
\end{tikzcd}
\]
where $-(K_{X_j} +\Delta_j)$ is $\varphi_j$-ample. 
By Theorem \ref{t nume triv descent}, $L_{j+1}$ is Cartier if so is $L_j$. 
Then $L_Y := L_{\ell}$ is Cartier by induction on $j$. 

Let $g: Y \to Z$ be the induced birational morphism. 
By construction and the negativity lemma,  we have 
$L_Y \equiv_g 0$ and $K_Y+ \Delta_Y \equiv g^*(K_Z+ \Delta_Z) \equiv 0$. 
Again by Theorem \ref{t nume triv descent}, we get $L_Y \sim g^*L_Z$ for some Cartier divisor $L_Z$ on $Z$. 
\end{proof}

\subsection{Decomposition into pl-MMPs}

\begin{lem}\label{l large reduced div1}
Let $Z$ be a projective normal threefold. 
Take a proper closed subset $B$ of $Z$. 
Then there exists a reduced Cartier divisor $H_Z$ on $Z$ satisfying $B \subset H_Z$. 
\end{lem}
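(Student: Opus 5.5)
The plan is to use a very ample divisor and prime avoidance. Since $Z$ is projective, fix a very ample Cartier divisor $A$ on $Z$ and an embedding $Z \hookrightarrow \P^N$ given by a multiple of it. The irreducible components $B_1, \dots, B_r$ of $B$ are proper closed subsets of $Z$. Enlarging $B$ is harmless, since containing a larger set is a stronger conclusion. We may therefore assume $B \neq \emptyset$ and pick a closed point $z_i \in Z \setminus B_i$ for each $i$; if $B = \emptyset$, any reduced hyperplane section works.

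\textbf{Step 1: a section vanishing on $B$ but not on all of $Z$.} Let $\mathcal{I}_B \subset \MO_Z$ be the ideal sheaf of $B$ with its reduced structure. By Serre's theorem, $\mathcal{I}_B(mA)$ is globally generated for $m \gg 0$. Since $\mathcal{I}_B$ is nonzero at the generic point of $Z$, we obtain a nonzero section $s \in H^0(Z, \mathcal{I}_B(mA))$. Its divisor $D := \mathrm{div}(s)$ is an effective Cartier divisor with $B \subset \Supp D$.

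\textbf{Step 2: making it reduced.} The divisor $D$ need not be reduced, and passing to $D_{\red}$ destroys the Cartier property in general. The cleanest fix is to choose the section generically. Global generation of $\mathcal{I}_B(mA)$ shows that the linear system $\Lambda := |H^0(Z, \mathcal{I}_B(mA))|$ has base locus exactly $B$, at least set-theoretically for $m$ large. A general member of $\Lambda$ is then smooth, and in particular reduced, away from $B$ by Bertini's theorem applied to the base-point-free system $\Lambda$ on $Z \setminus B$. This holds because $\Lambda$ defines a morphism there, which is an immersion for $m \gg 0$, so the Bertini theorem for immersions applies in any characteristic. It follows that a general member $H \in \Lambda$ is generically reduced along every component not contained in $B$.

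The main obstacle is the components of $H$ contained in $B$. Since $\dim B \le 2$, the only possible such components are the two-dimensional components of $B$, and $H$ might contain them with multiplicity $\geq 2$. To control this, I would choose $m$ so that $\mathcal{I}_B(mA)$ is globally generated. Then at the generic point $\eta$ of a divisorial component $B_j$ (where $Z$ is regular, since $Z$ is normal), the sections generate $\mathcal{I}_{B,\eta} = \mathfrak{m}_\eta$, which is principal. Hence a general section is a uniformiser at $\eta$, and $H$ has multiplicity exactly one along $B_j$.

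\textbf{Conclusion.} Combining the two steps, a general $H \in \Lambda$ has all its components of multiplicity one, and it is Cartier since it is the divisor of a section of $\MO_Z(mA)$. A Cartier divisor on a normal variety whose components all have multiplicity one is reduced as a Weil divisor. Moreover, $Z$ being normal and hence $S_2$, the subscheme $H$ is $S_1$, so it is reduced as a scheme. Finally, $B \subset H$ because every member of $\Lambda$ vanishes along $\mathcal{I}_B$. Setting $H_Z := H$ completes the argument.
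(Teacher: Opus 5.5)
Your argument is correct, but it follows a different route from the paper. The paper passes to a resolution. When $\dim B\le 1$, it takes a birational $\alpha\colon V\to Z$ from a smooth projective $V$ with positive-dimensional fibres over the points of $B$, together with an exceptional effective $E$ such that $-E$ is $\alpha$-ample. It chooses a general (hence prime, by Bertini on the smooth $V$) member $H_V$ of the very ample system $|n(m\alpha^*A_Z-E)|$ and sets $H_Z:=\alpha_*H_V\sim nmA_Z$. This contains $B$ because the ample $H_V$ meets every positive-dimensional fibre $\alpha^{-1}(b)$. When $B$ is a prime divisor, the paper pushes forward a general member of $|n(m\beta^*A_Z-F)-B_W|$ and adds $B$. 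You instead stay on $Z$ and use the system $\Lambda$ of sections of $\mathcal{I}_B(mA)$, whose base locus is exactly $B$. Bertini handles the components of $H$ not contained in $B$. The fact that $\mathcal{I}_B$ localises to the maximal ideal of a DVR at the generic point of each divisorial component of $B$ handles the rest, and the $S_2$ property then gives reducedness. Your route needs neither resolution of singularities nor a relatively anti-ample exceptional divisor, treats all components of $B$ at once, and works verbatim in any dimension. The paper's route gives a slightly stronger output when $\dim B\le 1$ (a prime $H_Z$), and it only invokes Bertini on a smooth variety.

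Three places need a more careful formulation.

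First, a general member of $\Lambda$ is smooth only on $Z_{\mathrm{reg}}\setminus B$, not on all of $Z\setminus B$, since a Cartier divisor through a singular point of $Z$ is singular there. This suffices, because each component of $H$ is a surface and $\dim \operatorname{Sing} Z\le 1$.

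Second, the immersion claim needs a line of proof. Suppose $\mathcal{I}_B(m_0A)$ is globally generated and $m>m_0$. Then $\Lambda$ contains the products of sections of $\mathcal{I}_B(m_0A)$ and of $\MO_Z((m-m_0)A)$. On $Z\setminus B$, the subsystem they span induces the Segre composite of the two associated morphisms. That composite is an immersion because the second morphism is an embedding. Finally, a morphism having an immersion as a linear projection is itself an immersion.

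Third, \lq\lq a general section is a uniformiser'' should be spelled out. By global generation, the sections whose image in the one-dimensional $k(\eta)$-space $\mathfrak{m}_\eta/\mathfrak{m}_\eta^2$ vanishes form a proper $k$-linear subspace. Any section outside it generates $\mathfrak{m}_\eta$ by Nakayama. These finitely many open conditions, together with the Bertini condition, are simultaneously satisfiable.
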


\begin{proof}
We may assume that $B$ is irreducible. 
Fix an ample Cartier divisor $A_Z$ on $Z$. 

We first treat the case when $\dim B \leq 1$ (i.e., $B$ is either a point or a curve). 
Take a birational morphism $\alpha : V \to Z$ 
from a smooth projective threefold $V$ such that 
$\dim \alpha^{-1}(b) >0$ for every closed point $b \in B$. 
Moreover, we may assume that 
there exist an effective divisor $E$ on $V$ such that 
$\Supp E \subset \Ex(\alpha)$ and $-E$ is $\alpha$-ample 
\cite[Theorem 1 and Remark 5]{KW24}, \cite[Lemma 3.39]{KM98}. 
Then $m\alpha^*A_Z - E$ is ample for some integer $m>0$, which enables us to find another integer $n>0$ such that $|n(m\alpha^*A_Z - E)|$ is very ample. 
Take a general member $H_V$ of $|n(m\alpha^*A_Z - E)|$. 
By the Bertini theorem, $H_V$ is a prime divisor on $V$. 
We then get 
\[
H_Z := \alpha_*H_V \sim \alpha_*(n(m\alpha^*A_Z - E)) = nmA_Z. 
\]
Hence $H_Z$ is a prime Cartier divisor. 
Since $H_Z$ intersects $\alpha^{-1}(b)$ for every closed point $b \in B$, 
we get $B \subset H_Z$. 
This completes the proof for the case when $\dim B \leq 1$. 

In what follows, we assume that  $B$ is a prime divisor. 
There exist a birational morphism $\beta : W \to Z$ 
and an effective Cartier divisor $F$ on $W$ such that 
$W$ is a smooth projective threefold, 
$\Supp F \subset \Ex(\beta)$, and $-F$ is $\beta$-ample 
\cite[Theorem 1]{KW24}, \cite[Lemma 3.39]{KM98}. 
Then $m\beta^*A_Z -F$ is ample for some integer $m>0$, and hence 
$|n(m\beta^*A_Z -F) - B_W|$ is very ample for the proper transform $B_W$ of $B$ and some integer $n>0$. 
Take a general member $D_W$ of $|n(m\beta^*A_Z -F) - B_W|$, 
which is a prime divisor satsifying $D_W \neq B_W$. 
We then get 
\[
D_Z := \beta_*D_W \sim \beta_*(n(m\beta^*A_Z -F) - B_W) = nm A_Z -B. 
\]
Therefore, $H_Z := D_Z+B$ is a reduced Cartier divisor, as required. 
\end{proof}

\begin{lem}\label{l large reduced div2}
Let $Z$ be a projective normal threefold. 
Fix a proper closed subset $C$ of $Z$
Then there exists a reduced Cartier divisor $H_Z$ on $Z$ such that  
the following property $(\star)$ holds. 
\begin{enumerate}
\item[($\star$)] $C \subset H_Z$ and if $\alpha : V \to Z$ is 
a birational morphism from a projective $\Q$-factorial threefold $V$, 
then the prime divisors contained in $\alpha^{-1}(H_Z) \cup \Ex(\alpha)$ generate $N^1(V/Z)$. 
\end{enumerate}
\end{lem}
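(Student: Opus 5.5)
The plan is to enlarge $C$ to a closed subset $B$ that contains the non-$\Q$-factorial locus of $Z$, and then apply Lemma \ref{l large reduced div1} to $B$.

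Let $B \subsetneq Z$ be a proper closed subset with $C \subset B$ that also contains the singular locus $Z_{\mathrm{sing}}$. Lemma \ref{l large reduced div1} gives a reduced Cartier divisor $H_Z$ with $B \subset H_Z$, so $C \subset H_Z$. Set $U := Z \setminus H_Z$. Then $U$ is smooth; in particular every Weil divisor on $U$ is Cartier.

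Now let $\alpha : V \to Z$ be a birational morphism from a projective $\Q$-factorial threefold $V$. Take a Cartier divisor $D$ on $V$; since $V$ is $\Q$-factorial, $\R$-Cartier divisor classes are spanned by prime divisors, so it suffices to treat Cartier $D$. The aim is to show that, modulo $\equiv_Z$, $D$ is a combination of prime divisors contained in $\alpha^{-1}(H_Z) \cup \Ex(\alpha)$.

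\begin{enumerate}
\item Write $D = D' + D''$, where $D''$ collects the components of $D$ lying in $\alpha^{-1}(H_Z) \cup \Ex(\alpha)$ and $D'$ collects the remaining components.
\item Every component of $D'$ is non-exceptional and meets $\alpha^{-1}(U)$. Set $G := \alpha_* D'$. Its restriction $G|_U$ is Cartier on the smooth variety $U$, but $G$ itself need not be $\Q$-Cartier on $Z$.
\item To deal with this, use that $G + mA_Z$ is effective-ish and choose $N \in |m A_Z - G|$ general, or more simply pick a rational function. The key point is that $\MO_Z(G)|_U$ is invertible: cover $U$ by affine opens $U_i$ on which $G = \operatorname{div}(f_i)$. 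Each $\alpha^*\operatorname{div}(f_i) - D'$ is then supported in $\alpha^{-1}(Z \setminus U_i) \cup \Ex(\alpha)$.
\item Reorganise this by taking a single rational function $f$ with $G - \operatorname{div}(f)$ supported away from a chosen point of $U$.
\end{enumerate}

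Step 4 is where the argument breaks down: one cannot globally trivialize $G$ on all of $U$. This is the main obstacle, and the fix is to enlarge $H_Z$ from the start.

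Because $\mathrm{Cl}(Z)$ need not be finitely generated, the enlargement goes through the resolution and $N^1$. Fix a resolution $\beta : W \to Z$. The group $N^1(W)$ is finite dimensional, so choose finitely many prime divisors $\Gamma_1, \dots, \Gamma_r$ on $W$ whose classes span $N^1(W)$. Add $\beta(\Gamma_j)$ and $\beta(\Ex(\beta))$ to $B$; these are proper closed subsets, using that $\beta(\Gamma_j)$ is a divisor or smaller. Then apply Lemma \ref{l large reduced div1}.

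Given $\alpha : V \to Z$, take a common resolution $\gamma : W' \to V$ with $\delta : W' \to W$. For a prime divisor $D$ on $V$ not contained in $\alpha^{-1}(H_Z) \cup \Ex(\alpha)$, let $D_W$ be its strict transform on $W$. Since $N^1(W)$ is spanned by the $\Gamma_j$, we have $D_W \equiv \sum_j a_j \Gamma_j$.

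Pull this relation back to $W'$ and push it forward to $V$. Pushforward of numerical classes is the delicate step. To handle it, instead use the equivalent dual condition: a curve $\xi$ in $N_1(V/Z)$ that has zero intersection with every prime divisor in $\alpha^{-1}(H_Z) \cup \Ex(\alpha)$ is zero. Such a $\xi$ is supported over $\alpha(\Ex(\alpha)) \subset H_Z$, so it can be pushed and pulled through $W'$ and tested against the pullbacks of the $\Gamma_j$. The $\Gamma_j$ map into $H_Z$ and the exceptional loci are contained in the preimage of $H_Z$, so $\xi \cdot \gamma_*\delta^*\Gamma_j = 0$ for all $j$. This forces $\xi \equiv 0$ on $V$, since $\alpha^*$ of ample divisors together with these classes span $N^1(V)$.
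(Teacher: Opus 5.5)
Your construction of $H_Z$ is the paper's: fix a resolution $\beta\colon W\to Z$, choose prime divisors $\Gamma_j$ spanning $N^1(W)$ (the paper uses $N^1(W/Z)$), and apply Lemma \ref{l large reduced div1} to a closed set containing $C\cup\bigcup_j\beta(\Gamma_j)$. Steps 1--4, and the extra sets $Z_{\mathrm{sing}}$ and $\beta(\Ex(\beta))$, can be dropped. The paper simply asserts that it suffices to check $(\star)$ for $V=W$. So the only real point is the transfer from $W$ to an arbitrary $\Q$-factorial $V$, and your argument for that transfer fails as written.

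\textbf{Where it fails.} The claim $\alpha(\Ex(\alpha))\subset H_Z$ is false, because $V$ is arbitrary. Take $Z$ smooth, $W=Z$, and $V$ the blowup of a point $P\notin H_Z$. In the same example $\gamma_*\delta^*\Gamma_j=\alpha^*\Gamma_j$. So $\alpha^*$ of ample classes together with these classes span only $\alpha^*N^1(Z)$ and miss the exceptional divisor; your closing spanning claim is false. Your intermediate conclusion $\xi\cdot\gamma_*\delta^*\Gamma_j=0$ does hold, but for a different reason: every component of $\gamma_*\delta^*\Gamma_j$ is $\alpha$-exceptional or maps into $\beta(\Gamma_j)\subset H_Z$. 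The correct spanning statement is that the $\gamma_*\delta^*\Gamma_j$ together with the $\alpha$-exceptional prime divisors span $N^1(V)$. Proving it needs exactly the compatibility of $\gamma_*$ with numerical equivalence that you called delicate. The dual reformulation does not avoid this, since comparing $\xi\cdot\gamma_*M$ with an intersection number on $W'$ produces $\gamma$-exceptional correction terms.

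\textbf{The fix.} The missing ingredient is the negativity lemma, and with it your abandoned direct route works.
\begin{enumerate}
\item Let $M$ be an $\R$-divisor on the smooth $W'$ with $M\equiv 0$. Then $\gamma_*M$ is $\R$-Cartier because $V$ is $\Q$-factorial. The divisor $\gamma^*\gamma_*M-M$ is $\gamma$-exceptional and $\gamma$-numerically trivial, hence zero. Therefore $\gamma_*M\equiv 0$.
\item Let $D\subset V$ be a prime divisor not contained in $\alpha^{-1}(H_Z)\cup\Ex(\alpha)$. Write $\delta^*D_W=D_{W'}+F$, with $D_{W'}$ the strict transform and $F$ $\delta$-exceptional, and $D_W\equiv\sum_j a_j\Gamma_j$.
\item Apply (1) to $M:=D_{W'}+F-\sum_j a_j\delta^*\Gamma_j$. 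This gives $D\equiv\sum_j a_j\gamma_*\delta^*\Gamma_j-\gamma_*F$.
\item Every $\delta$-exceptional prime divisor is exceptional over $Z$, so its pushforward to $V$ is zero or lies in $\Ex(\alpha)$. The strict transform of $\Gamma_j$ on $V$, if nonzero, maps into $\beta(\Gamma_j)\subset H_Z$.
\end{enumerate}
Hence $D$ lies in the required span. This proves $(\star)$ and also justifies the paper's one-line reduction to $V=W$.
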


\begin{proof}
Fix a birational morphism $\beta : W \to Z$ from a projective $\Q$-factorial threefold $W$. 
It is enough to find a reduced Cartier divisor $H_Z$ on $Z$ satisfying 
the property $(\star)$ for the case when $V= W$ and $\alpha = \beta$. 
Fix prime divisors $D_1, ..., D_r$ which generate $N^1(W/Z)$. 
Then the assertion holds by applying Lemma \ref{l large reduced div1} to $B := C \cup \beta(D_1 \cup \cdots \cup D_r)$. 
\end{proof}

Let us recall results on three-dimensional minimal model program (cf.\ \cite{HX15}, \cite{CTX15}, \cite{BW17}, \cite{HW22}, \cite{Xu24}). 

\begin{thm}\label{t lc mmp}
Assume $p>5$. 
Let $(X, \Delta)$ be a three-dimensional $\Q$-factorial lc pair, 
where $\Delta$ is an $\Q$-divisor. 
Let $f:X \to Z$ be a projective morphism to a quasi-projective scheme $Z$. 
Then the following hold. 
\begin{enumerate}
\item There exists a $(K_X+\Delta)$-MMP over $Z$ which terminates. 
\item 
If $A$ is an $f$-ample $\Q$-Cartier $\Q$-divisor and $K_X+\Delta$ is $f$-pseudo-effective, 
then there exists a $(K_X+\Delta)$-MMP over $Z$ 
scaling of $A$ which terminates. 
\end{enumerate}
\end{thm}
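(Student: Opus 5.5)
This theorem collects results from the literature rather than proving anything new, so I will assemble it from the existing three-dimensional MMP in characteristic $p>5$. The plan is to reduce to the klt case, where the full MMP is available, and then handle the lc case through a dlt modification and special termination.

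For (1), the ingredients in the $\Q$-factorial klt case are as follows.
\begin{itemize}
\item The cone and contraction theorems for threefold pairs over $Z$, from \cite{BW17} (building on \cite{HX15}, \cite{CTX15}).
\item Existence of flips for $p>5$, which is also \cite{BW17}.
\item Termination of flips for threefolds, which follows from the usual difficulty argument.
\end{itemize}
Together these give a terminating $(K_X+\Delta)$-MMP over $Z$.

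For a general $\Q$-factorial lc pair, I would proceed in two steps.
\begin{itemize}
\item Pass to a dlt modification, as in \cite{HW22} or \cite{Xu24}.
\item Run the MMP with scaling and prove termination by special termination on $\rdown{\Delta}$. This uses the surface MMP on the components of $\rdown{\Delta}$ after adjunction, combined with termination of klt flips away from $\rdown{\Delta}$.
\end{itemize}
For the lc (non-dlt) case the cone and contraction theorems themselves, and the existence of lc flips, are the delicate input. Here I would cite \cite{HW22} together with \cite[lc MMP for threefolds]{Xu24}, which establish the cone theorem, the contraction theorem and flips for three-dimensional lc pairs in characteristic $p>5$. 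The only remaining point is termination, which again follows from special termination plus termination for klt pairs.

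For (2), I would run the MMP with scaling of $A$. Let $\lambda_i$ be the nef thresholds, so that $K_{X_i}+\Delta_i+\lambda_i A_i$ is nef over $Z$ and the next step is negative on a ray $R$ with $(K+\Delta+\lambda_i A)\cdot R=0$. Such rays exist by the cone theorem, because the negative part of the cone is locally discrete. Since $K_X+\Delta$ is $f$-pseudo-effective, no Mori fibre space ever appears. The sequence consists of divisorial contractions and flips, and it terminates because it is in particular a $(K_X+\Delta)$-MMP and (1)'s termination argument applies to any such sequence.

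I expect the main obstacle to be termination in the lc case, specifically special termination near non-klt centres. I would handle it by citing the corresponding statements of \cite{Xu24} and \cite{HW22} rather than reproving them.
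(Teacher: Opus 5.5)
Like the paper, you are assembling this from the literature, but the paper cites a single source. Part (1) is \cite[Theorem 1.1]{HNT20}. Part (2) is obtained from \cite[Theorem 1.3, Theorem 1.4, Theorem 1.6(2), Theorem 3.2]{HNT20}, which handle the MMP with scaling for $\Q$-factorial lc threefolds over a quasi-projective base in characteristic $p>5$ directly. Your proposal never uses \cite{HNT20}. The lc cone theorem, contraction theorem and existence of lc flips that you attribute to \cite{HW22} and \cite{Xu24} are exactly what \cite{HNT20} establishes in this relative setting, so that is the reference you need.

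The argument you add on top of the citations has a genuine hole. Running an MMP with scaling on a dlt modification $(Y,\Delta_Y) \to (X,\Delta)$ and using special termination along $\rdown{\Delta_Y}$ gives a terminating MMP on $Y$. It does not give a $(K_X+\Delta)$-MMP on $X$. Special termination is a statement about dlt pairs, so it does not directly control lc flips on $X$ itself. For that you would need two further steps, neither of which you supply:
\begin{enumerate}
\item lift each step $X_i \dashrightarrow X_{i+1}$ to an MMP on dlt models over the base of its contraction;
\item check that an infinite sequence of flips on $X$ forces infinitely many nontrivial steps on the dlt models.
\end{enumerate}

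This hole propagates to (2). There you claim termination because ``(1)'s termination argument applies to any such sequence''. But (1) only asserts that \emph{some} terminating MMP exists, and the MMP with scaling of $A$ is a specific sequence that need not be that one. You therefore have two options. One is to prove termination of arbitrary sequences of lc flips for $\Q$-factorial threefolds in characteristic $p>5$ via the lifting argument above; divisorial contractions are harmless since they drop the Picard number. The other, which is what the paper does, is to invoke the results of \cite{HNT20} on the MMP with scaling.
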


\begin{proof}
The assertion (1) follows from \cite[Theorem 1.1]{HNT20}. 
The assertion (2) holds by 
\cite[Theorem 1.3, Theorem 1.4, Theorem 1.6(2), Theorem 3.2]{HNT20}. 
\end{proof}

\begin{nota}[Decomposition into pl-MMPs]\label{n pl MMP}
Assume $p>5$. 
Let $(X, \Delta)$ be a three-dimensional 
projective $\Q$-factorial klt pair. 
Let $f : X \to Z$ be a birational contraction of a $(K_X+\Delta)$-negative extremal ray. 
Fix a reduced Cartier divisor $H_Z$ on $Z$ as in Lemma \ref{l large reduced div2} for the non-snc locus $C$ of $(Z, f_*\Delta)$. 
Take a log resolution $\mu_Z : V \to Z$ of $(Z, f_*\Delta +H_Z)$ which factors through $X$: 
\[
\mu_Z : V \xrightarrow{\mu_X} X \xrightarrow{f} Z. 
\]
We may assume that $\mu_Z(\Ex(\mu_Z)) \subset  H_Z$. 
Set $B_V := (\mu_Z)_*^{-1}(f_*\Delta)$ and 
$H_V := (\mu_Z)_*^{-1}H_Z$. 
Let $E_V$ be the reduced $\mu_Z$-exceptional divisor on $V$ satisfying $\Supp E_V = \Ex(\mu_Z)$. 

First, we run a $(K_V+H_V+E_V+B_V)$-MMP over $Z$ (Theorem \ref{t lc mmp}): 
\begin{equation}\label{e1 n pl MMP}
V =:V_0 \overset{\varphi_0}{\dashrightarrow} V_1 
\overset{\varphi_1}{\dashrightarrow} \cdots 
\overset{\varphi_{\ell-1}}{\dashrightarrow} V_{\ell} =:V', 
\end{equation}
where $V'$ denotes its end result. 
Let $\psi_i : V_i \to W_i$ be the associated extremal-ray contraction. 
We shall show that $\Ex(\psi_i) \subset S_i$ for some 
prime divisor $S_i \subset \Supp(H_{V_i}+E_{V_i})$ 
(Lemma \ref{l pl MMP1}(1)).

Next, we run a $(K_{V'}+E_{V'}+B_{V'})$-MMP over $Z$ with scaling of $H_{V'}$ (Theorem \ref{t lc mmp}): 
\begin{equation}\label{e2 n pl MMP}
V' =:V'_0 \overset{\varphi'_0}{\dashrightarrow} V'_1 
\overset{\varphi'_1}{\dashrightarrow} \cdots 
\overset{\varphi'_{\ell'-1}}{\dashrightarrow} V'_{\ell'} =:V'', 
\end{equation}
where $V''$ denotes its end result. 
Let $\psi'_i : V'_i \to W'_i$ be the associated extremal-ray contraction. 
We shall show that $\Ex(\psi'_j) \subset S_j$ for some 
prime divisor $S'_j \subset \Supp E_{V'_i}$ 
(Lemma \ref{l pl MMP1}(2)). 
Moreover,  one of the following holds (Lemma \ref{l pl MMP1}(3)(4)): 
\begin{itemize}
\item $f:X \to Z$ is a divisorial contraction and $Z=  V''$. 
\item $f: X \to Z$ is a flipping contraction and 
$V'' \to Z$ is its flip. 
\end{itemize}
\end{nota}

\begin{lem}\label{l pl MMP1}
We use Notation \ref{n pl MMP}. 
Then the following hold. 
\begin{enumerate}
\item There exists a $\psi_i$-anti-ample prime divisor $S_i \subset \Supp(H_{V_i}+E_{V_i})$ for every $0 \leq i\leq \ell-1$. 
\item  There exists a $\psi'_j$-anti-ample prime divisor $S_j \subset \Supp E_{V_j}$ for every $0 \leq j\leq \ell'-1$.
\item If $f : X \to Z$ is a divisorial contraction (i.e., $\dim \Ex(f)=2$), then $Z = V''$. 
\item If $f : X \to Z$ is a flipping contraction 
 (i.e., $\dim \Ex(f)=1$) and $f^+ : X^+ \to Z$ is its flip, then $X^+ = V''$. 
\end{enumerate}
\end{lem}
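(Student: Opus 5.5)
The plan is to treat (1) and (2) by the standard ``negative divisor supported in the boundary'' argument, and (3), (4) by comparing $V''$ with $Z$ (resp.\ $X^+$) via the negativity lemma.

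\emph{Step (1).} Consider a step $\psi_i$ of the first MMP. Since $\mu_Z(\Ex(\mu_Z))\subset H_Z$ and $(Z, f_*\Delta)$ is snc outside $C\subset H_Z$, over $Z\setminus H_Z$ the map $V\to Z$ is an isomorphism and there is no $(K_V+H_V+E_V+B_V)$-negative curve there. Hence every curve contracted by $\psi_i$ lies over $H_Z$. I will write
\[
\mu_{Z}^*H_Z = H_{V} + \sum a_k E_k, \qquad a_k \ge 0,
\]
and push this forward to $V_i$. We get $H_{V_i}+\sum a_kE_{k,V_i}\equiv_Z 0$, where the sum runs over the surviving components. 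If $R_i$ is the extremal ray, then $K_{V_i}+H_{V_i}+E_{V_i}+B_{V_i}$ is negative on $R_i$. I want a component $S_i$ of $H_{V_i}+E_{V_i}$ with $S_i\cdot R_i<0$. A curve $\gamma$ spanning $R_i$ lies in $\mu^{-1}(H_Z)$, so it lies in some component $T$ of $H_{V_i}+E_{V_i}$. Using property $(\star)$ of Lemma \ref{l large reduced div2}, the components of $H_{V_i}+E_{V_i}$ generate $N^1(V_i/Z)$. Since $R_i$ is $\psi_i$-contracted and $-(\text{some ample})\cdot R_i<0$, some component must be negative on $R_i$. Such a component $S_i$ is then $\psi_i$-anti-ample, because $\rho(V_i/W_i)=1$.

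\emph{Step (2).} The same argument applies to the second MMP, with a refinement: since it runs with scaling of $H_{V'}$, each ray $R'_j$ satisfies $(K+E+B)\cdot R'_j<0$ and $(K+E+B+\lambda_jH)\cdot R'_j=0$ with $\lambda_j>0$, so $H\cdot R'_j>0$. Combined with $H+\sum a_kE_k\equiv_Z0$, this gives $\sum a_kE_k\cdot R'_j<0$. Hence some component of $E_{V'_j}$ is negative on $R'_j$, which is exactly the statement of (2).

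\emph{Steps (3) and (4).} Write $K_{V''}+E_{V''}+B_{V''}$ over $Z$. By construction it is nef over $Z$. On the other hand it equals, up to an exceptional divisor, the pullback of $K_Z+f_*\Delta$ (resp.\ $K_{X^+}+\Delta^+$). In case (3), $K_Z+f_*\Delta$ is $\Q$-Cartier and klt. Then
\[
K_{V''}+E_{V''}+B_{V''}=g^*(K_Z+f_*\Delta)+F,
\]
where $F$ is exceptional over $Z$ with coefficient $1+a(E,Z,f_*\Delta)>0$ on every surviving exceptional prime. The negativity lemma applied to the nef $F$ forces $F\le 0$, so no exceptional divisor survives, and $V''\to Z$ is small. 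Since $Z$ is $\Q$-factorial in the divisorial case (a standard fact for divisorial contractions of $\Q$-factorial klt pairs), a small projective birational morphism to $Z$ is an isomorphism, giving $V''=Z$.

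In case (4), I use the same comparison with $X^+$. Here $K_{X^+}+\Delta^+$ is $Z$-ample and $X^+$ is the log canonical model over $Z$. The negativity lemma again shows that every $\mu$-exceptional divisor is contracted, so $V''\to Z$ is small. Then $K_{V''}+B_{V''}$ is $\Q$-Cartier and $Z$-nef, and it is the strict transform of the $Z$-ample $K_{X^+}+\Delta^+$. I will conclude $V''\cong X^+$ by uniqueness of the lc model, after checking that $K_{V''}+B_{V''}$ is in fact $Z$-ample (its relative Picard number is 1, and it is not numerically trivial). The main obstacle is the bookkeeping in (1), namely ensuring that the divisor generating $N^1(V_i/Z)$ stays available after each step.
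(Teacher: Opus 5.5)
Your proof is correct. Parts (1) and (2) follow the paper's argument. Parts (3) and (4) take a different route.

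\textbf{A fix in (1).} Generation of $N^1(V_i/Z)$ by the components of $H_{V_i}+E_{V_i}$ only gives a component $T$ with $T\cdot R_i\neq 0$. To get a \emph{negative} one, you must combine this with $H_{V_i}+\sum_k a_kE_{k,V_i}\equiv_Z 0$. That step needs $a_k>0$ for every surviving exceptional prime, not just $a_k\ge 0$ as you wrote. Positivity holds because $\mu_Z(\Ex(\mu_Z))\subset H_Z$. The bookkeeping you worry about is not an issue: property $(\star)$ of Lemma \ref{l large reduced div2} applies directly to each $\Q$-factorial $V_i\to Z$.

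\textbf{How the paper does (3) and (4).} The paper proves smallness of $V''\to Z$ uniformly, without reference to any target model. It sets $M_V=-\mu_X^*(K_X+\Delta)$, which is semi-ample over $Z$, and writes $K_{V''}+E_{V''}+B_{V''}+M_{V''}\sim_{\Q}\sum a_\lambda E_{V'',\lambda}$ with $a_\lambda>0$. It then runs one more MMP over $Z$. The negativity lemma forces a divisorial contraction, and movability of that divisor contradicts it. In (4) the paper obtains $g$-ampleness of $K_{V''}+B_{V''}$ from Theorem \ref{t nume triv descent}.

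\textbf{How yours differs.} You compare $V''$ directly with a klt model already in hand: $(Z,f_*\Delta)$ in the divisorial case, and $(X^+,\Delta^+)$ on a common normal model $W$ in the flipping case. A single application of the negativity lemma then suffices. A surviving component of $E_{V''}$ has discrepancy $-1$ for $(V'',E_{V''}+B_{V''})$ but discrepancy $>-1$ for the klt model, so it cannot exist.

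\textbf{What each route buys.} Yours is shorter. It avoids both the auxiliary MMP and the relative semi-ampleness of $K_{V''}+E_{V''}+B_{V''}$ that the paper invokes. The price is that in (4) you use the existence and klt-ness of the flip $X^+$. So your argument cannot be used to build the flip out of the pl-steps, whereas the paper's argument shows that $V''$ itself is the flip. Using $X^+$ is legitimate here, since (4) presupposes it.

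\textbf{The check you postpone in (4).} It is routine in your set-up. Run the negativity lemma over $V''$ as well as over $X^+$. This gives $p^*(K_{V''}+B_{V''})=q^*(K_{X^+}+\Delta^+)$ on $W$. Since $K_{X^+}+\Delta^+$ is $Z$-ample, every $p$-contracted curve is $q$-contracted. Hence $V''\dashrightarrow X^+$ is a small birational morphism onto the $\Q$-factorial $X^+$, and therefore an isomorphism. With this, you need neither $\rho(V''/Z)=1$ nor a separate non-triviality argument.
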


Although this result is probably well known, 
we include a proof for the sake of completeness 
(cf.\ \cite{Fuj07f}).

\begin{proof}
Let us show (1). 
Since the prime divisors contained in $E_{V_i}+H_{V_i}$ generate 
$N^1(V_i/Z)$, 
a divisor $A_i$ on $V_i$ ample over $Z$ can be written as an $\R$-linear combination of them. 
Hence we get $S_i \cdot R_i \neq 0$ for the extremal ray $R_i$ 
corresponding to $\psi_i$ and some prime divisor $S_i$ contained in $\Supp(E_{V_i}+ H_{V_i})$. 
If $S_i \cdot R_i <0$, then we are done. 
Otherwise (i.e., $S_i \cdot R_i>0$), we can find another prime divisor $T_i \subset \Supp(E_{V_i}+ H_{V_i})$ satisfying 
$T_i \cdot R_i <0$ by using 
$0 \sim_Z \mu^*_Z H_Z = H_V+ \sum_{\lambda} b_{\lambda} E_{V, \lambda}$. 
Here we used the fact that $b_{\lambda}>0$ for every $\mu_Z$-exceptional prime divisor $E_{V, \lambda}$. 
Thus (1) holds.

Let us show (2). 
The second MMP (\ref{e2 n pl MMP}) is $H_{V'}$-positive, as 
it is scaling of $H_{V'}$. 
By $0\equiv_Z \mu_Z^*H_Z = H_V+ \sum_{\lambda} b_{\lambda}E_{\lambda}$, this MMP is 
$\sum_{\lambda} b_{\lambda}E_{\lambda}$-negative. 
Therefore, each step of the MMP  (\ref{e2 n pl MMP}) is negative on some $E_{\lambda}$. 
Thus (2) holds.

We now prove ($\star$) below. 
Note that ($\star$) implies (3), because a birational morphism between projective $\Q$-factorial varieties is automatically an isomorphism. 
\begin{enumerate}
\item[($\star$)]$E_{V''}=0$, i.e., $V'' \to Z$ is small. 
\end{enumerate}
Since $-(K_X+\Delta)$ is ample over $Z$, 
$M_V := -\mu^*_X(K_X+\Delta)$ is semi-ample over $Z$. 
It holds that  
\[
K_V +E_V+B_V \sim_{\Q} \mu^*_X(K_X+\Delta) + \sum_{\lambda \in \Lambda} a_{\lambda} E_{V, \lambda},  
\]
where  we have $a_{\lambda}>0$ for every $\mu_Z$-exceptional prime divisor $E_{\lambda}$ (even if it is the proper transform of an $f$-exceptional prime divisor). 
Therefore, we get 
\[
K_{V''} +E_{V''} +B_{V''} +  M_{V''} \sim_{\Q} \sum_{\lambda \in \Lambda} a_{\lambda} E_{V'', \lambda}. 
\]

Suppose that $E_{V''} \neq 0$, i.e., 
$\sum_{\lambda \in \Lambda} a_{\lambda} E_{V'', \lambda} \neq 0$. 
We run a $(K_{V''} +E_{V''} +B_{V''} +  M_{V''})$-MMP over $Z$: 
\[
V'' =: V''_0 \dashrightarrow V''_1 \dashrightarrow \cdots \dashrightarrow V''_{\ell''} =:V'''. 
\]
By the negativity lemma, we have $\sum_{\lambda} a_{\lambda} E_{V''', \lambda} =0$. 
Hence we can find an index $i$ such that $\varphi''_i : V''_i \to V''_{i+1}$ is a divisorial contraction. 
We have $E_{V''_i, \lambda} = \Ex(\varphi''_i)$ for some $\lambda \in \Lambda$. 
Since each of $M_V$ and $K_{V''}+E_{V''}+B_{V''}$ is semi-ample, 
the sum $K_{V''_i} +E_{V''_i} +  M_{V''_i}$ of their push-forwards on $V''_i$ is movable over $Z$,  i.e., 
the relative stable base locus is of codimension $\geq 2$. 
Then there exists an effective $\Q$-divisor $D$ on $V''_i$ such that 
$K_{V''_i} +E_{V''_i} +B_{V''_i}+  M_{V''_i} \sim_{\Q, Z} D$ and 
$\Ex(\varphi''_i) \not\subset \Supp\,D$. 
Hence we can find a $\varphi''_i$-contracted  curve $\zeta \subset \Ex(\varphi''_i)$ such that 
\begin{enumerate}
\item[(i)] $\varphi''(\zeta)$ is a point, and 
\item[(ii)] 
$\zeta \not\subset \Supp D$. 
\end{enumerate}
Then (i) implies $(K_{V''_i} +E_{V''_i}+B_{V''_i} +  M_{V''_i}) \cdot \zeta <0$, whilst it follows from (ii) that  
$ 0 \leq D \cdot \zeta = (K_{V''_i} +E_{V''_i} +B_{V''_i}+  M_{V''_i}) \cdot \zeta \geq 0$. 
This is absurd. 
This completes the proof of ($\star$), and hence of (3).

Let us show (4). 
Note that we have  $\rho(V''/Z) = \rho(X/Z) = 1$, 
because each of $\rho(V/Z) - \rho(X/Z)$ and $\rho(V/Z)-\rho(V''/Z)$ 
is equal to the number of $\mu_Z$-exceptional prime divisors. 
By ($\star$) and construction, 
\begin{itemize}
\item each of $f: X \to Z$ and $g: V'' \to Z$ is a small birational morphism, 
\item $-(K_X+\Delta)$ is $f$-ample, $(K_{V''}+B_{V''})$ is $g$-nef, and 
\item $B_{V''}$ is the proper transform of $\Delta$. 
\end{itemize}
Hence it is enough to show that  $(K_{V''}+B_{V''})$ is $g$-ample. 
Otherwise, we would have $K_{V''}+B_{V''} \equiv_g 0$, which implies that $K_Z+\Delta_Z$ is $\Q$-Cartier and 
$K_{V''}+B_{V''} \sim_{\Q} g^*(K_Z+\Delta_Z)$ for $\Delta_Z := g_*B_{V''} = f_*\Delta$ (cf.\ Theorem \ref{t nume triv descent}). 
Then $K_X+\Delta \sim_{\Q} f^*(K_Z+\Delta_Z)$, and hence $K_X+\Delta$ is $f$-numerically trivial. 
This is absurd. 
Thus (4) holds. 
\end{proof}

\begin{lem}\label{l nef pl decompo}
We use Notation \ref{n pl MMP}. 
Let $N$ be an $f$-ample nef Cartier divisor on $X$. 
Set $N_V := \mu^*N$ and let $N_{V_i}$ (resp.\ $N_{V'_j}$) 
be its push-forward on $V_i$ (resp.\ $V'_j$). 
Then either 
\begin{enumerate}
\item 
$N_V$ descends to $V_i$ and $N_{V_i}$ is $\psi_i$-ample for some $i$, or 
\item 
$N_V$ descends to $N_{V'_j}$ and 
$N_{V'_j}$ is $\psi'_j$-ample for some $j$. 
\end{enumerate}
\end{lem}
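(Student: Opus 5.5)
We propose to follow $N_V$ step by step along the two MMPs (\ref{e1 n pl MMP}) and (\ref{e2 n pl MMP}), and to show that it keeps descending as a Cartier divisor numerically trivial over the current contraction until it first becomes ample over one. The basic tool is Theorem \ref{t nume triv descent} (equivalently Proposition \ref{p birat descent}). Over $Z$, the divisor $N$ is $f$-ample and $N_V=\mu_X^*N$ is nef over $Z$, and $N_V$ is trivial exactly on the $\mu_X$-exceptional curves.

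We argue inductively along the concatenated sequence $V_0\dashrightarrow\cdots\dashrightarrow V_\ell=V'_0\dashrightarrow\cdots\dashrightarrow V'_{\ell'}=V''$. Suppose $N_V$ has descended to a Cartier divisor $N_i$ on the current model $V_i$, i.e.\ its push-forward is Cartier and nef over $Z$. The latter holds because each step so far is $N$-trivial, so the pullbacks to a common resolution agree. Let $R_i$ be the extremal ray contracted by $\psi_i$. Since $N_i$ is nef over $Z$, either $N_i\cdot R_i>0$ or $N_i\cdot R_i=0$.

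If $N_i\cdot R_i>0$, then $N_i$ is $\psi_i$-ample because $\rho(V_i/W_i)=1$, and we are in case (1) or (2). If $N_i\cdot R_i=0$, then $N_i\equiv_{\psi_i}0$. The pairs on $V_i$ are lc, not klt, so we cannot apply Theorem \ref{t nume triv descent} directly. Instead we perturb the boundary: $R_i$ is $(K+H+E+B)$-negative and, by Lemma \ref{l pl MMP1}(1)(2), some prime divisor $S_i$ in the boundary is $\psi_i$-negative. Hence we can choose $0<\epsilon\ll1$ and lower the coefficients of the boundary so that the pair becomes klt and stays negative on $R_i$. Here we use that $V_i$ is $\Q$-factorial, that the pair $(V,H_V+E_V+B_V)$ is log smooth dlt, and that the MMP preserves dlt, so subtracting a small multiple of the reduced boundary gives a klt pair. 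Then Theorem \ref{t nume triv descent}, applied to $\psi_i$ with $\dim W_i>0$, gives $N_i\sim\psi_i^*M$ for a Cartier divisor $M$ on $W_i$. Its pullback to the flip or divisorial target $V_{i+1}$ is Cartier, coincides with the push-forward $N_{i+1}$, and remains nef over $Z$. This continues the induction.

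It remains to show that the process cannot end with all steps $N$-trivial. If every step were $N$-trivial, then $N_V$ would descend to a Cartier divisor $N''$ on $V''$ with $N''\sim_Z$ pullback of $N$, via the common resolution. By Lemma \ref{l pl MMP1}(3)(4), $V''$ is either $Z$ or the flip $X^+$. In the first case, $N$ being $f$-ample with $f$ nontrivial contradicts $N\equiv_f 0$, since $N_V$ would be the pullback of a divisor on $Z$. In the flip case, $N''$ is numerically trivial over $Z$ on $X^+$. Since $X$ and $X^+$ are isomorphic in codimension one and $\rho(X^+/Z)=1$, the divisor $N$ would then be $f$-trivial as well, again contradicting $f$-ampleness. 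Hence some step has $N_i\cdot R_i>0$, and the first such step yields (1) or (2).

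The main obstacle is the klt perturbation in the $N$-trivial case, namely checking that $(V_i,\text{boundary}-\epsilon\cdot\text{reduced part})$ is klt and still negative on $R_i$. If the perturbation fails, we would instead use the klt-ness of a pair $(V_i,\Delta_i)$ obtained from the pullback of $(X,\Delta)$ and $M_V$, which is anti-ample relative to the contraction up to exceptional terms.
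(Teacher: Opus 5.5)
Your overall plan matches the paper's. You descend $N_V$ step by step using Theorem \ref{t nume triv descent}, and if every step is $N$-trivial you derive a contradiction on $V''$ via Lemma \ref{l pl MMP1}(3)(4). Your klt perturbation is correct and makes explicit a point the paper leaves implicit: the $\Q$-factorial dlt boundary $\Gamma_i$ is replaced by $\Gamma_i-\epsilon\lfloor\Gamma_i\rfloor$, which is klt and still anti-ample over $W_i$ for $0<\epsilon\ll 1$ because $\rho(V_i/W_i)=1$. Lemma \ref{l pl MMP1}(1)(2) plays no role in that step. Your divisorial case is also fine.

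The flip case has a genuine gap. You assert that $N''$ is numerically trivial over $Z$, citing the common resolution, but that identity gives the opposite. Let $\gamma\colon U\to X$ and $\beta\colon U\to V''=X^+$ be the maps from a common resolution; descent means $\gamma^*N\sim\beta^*N''$. Take a curve $C\subset\Ex(f)$ and a curve $\tilde C\subset U$ dominating it. Then $\beta^*N''\cdot\tilde C=\gamma^*N\cdot\tilde C>0$, so $\beta(\tilde C)$ is an $f^+$-exceptional curve on which $N''$ is positive. Since $N''$ is nef and $\rho(X^+/Z)=1$, this makes $N''$ $f^+$-ample, not $f^+$-trivial. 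Your argument therefore excludes the wrong alternative and never rules out the case that actually arises.

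The missing input is the one the paper uses. Since $\rho(X/Z)=1$, we have $N\equiv_Z -c(K_X+\Delta)$ for some $c>0$. This relation descends to $Z$ by Theorem \ref{t nume triv descent}, applied to a Cartier multiple, so it transfers to the strict transforms on $X^+$, where $K_{X^+}+\Delta^+$ is $f^+$-ample. Hence $N''$, the strict transform of $N$, is $f^+$-anti-ample, which contradicts its nefness. Equivalently, an $f^+$-ample $N''$ would force $X\cong X^+\cong\mathrm{Proj}_Z\bigoplus_m f_*\MO_X(mN)$ over $Z$, which is impossible for a flip.
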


\noindent
We say that $N_V$ {\em descends to} $V_i$ if 
there exists a Cartier divisor $M_{k}$ on $W_{k}$ 
satisfying $N_{V_{k}} \sim \psi_k^*M_k$ for every $0 \leq k \leq i-1$. 
In this case, all $N_{V_0}, ..., N_{V_i}$ are Cartier (Theorem \ref{t nume triv descent}). 
The definition of \lq\lq$N_V$ descends to $N_{V'_j}$" is given in a similar way.

\begin{proof}
Suppose that $N_{V_i}$ is $\psi_i$-numerically trivial for every $i$ and $N_{V'_j}$ is $\psi'_j$-numerically trivial for every $j$. 
It is enough to derive a contradiction. 
In this case, $N_V$ descends to the end result $V''$ (Theorem \ref{t nume triv descent}). 
In particular, $N_V$ is Cartier. 
If $f: X \to Z$ is a divisorial contraction, then we have $Z=V''$, and hence $N_V$ descends to $Z$ and $N_V$ is numerically trivial over $Z$. 
This contradicts the $f$-ampleness of  $N_V$. 

Hence $f: X \to Z$ is a flipping contradiction. 
In this case, $V'' \to Z$ is the flip of $f$. 
Note that $N_{V''}$ is nef, because $N_{V''}$ is obtained by descending from a nef Cartier divisor $N_V$. 
In particular, $N_{V''}$ is nef over $Z$. 
On the other hand, $N_{V''}$ is the proper transform of an $f$-ample divisor $N_X$ via the flip $X \dashrightarrow V''$, and hence $N_{V''}$ is anti-ample over $Z$. 
This is absurd. 
\end{proof}

\section{Pseudo-effectivity of $K_X+5L$}

\begin{prop}\label{p 3L MMP step}
Assume $p>5$. 
Let $(X, \Delta)$ be a three-dimensional projective $\Q$-factorial dlt pair  
and let $L$ be a nef Cartier divisor $L$ on $X$. 
Take a $(K_X+\Delta+3L)$-negative extremal ray $R$ of $\overline{\NE}(X)$ and 
let $f:X \to Z$ be its contraction. 
Assume that $f$ is birational. 
Then there exists a nef Cartier divisor $L_Z$ on $Z$ such that 
$L \sim f^*L_Z$. 
\end{prop}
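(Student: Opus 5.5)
The plan is to show first that $L \cdot R = 0$, and then to descend $L$ along $f$ by Proposition \ref{p birat descent} (or Theorem \ref{t nume triv descent}). Since $L$ is nef, $L \cdot R \geq 0$. Suppose for contradiction that $L \cdot R > 0$. Then $L$ is $f$-ample, and $K_X + \Delta + 3L$ is $f$-anti-ample because $\rho(X/Z)=1$. I would derive a contradiction by reducing to a two-dimensional situation where Lemma \ref{l surface 3L} applies.

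Since $(X, \Delta)$ is only dlt, I first perturb it. Write $\Delta = \lfloor \Delta \rfloor + \{\Delta\}$ and pass to a klt pair $(X, \Delta')$ with $\Delta' = \Delta - \epsilon \lfloor \Delta \rfloor$ (plus a small ample correction if needed) that is still $(K_X+\Delta'+3L)$-negative on $R$. Because $R$ is an extremal ray, $K_X + \Delta' + 3L$ stays negative on $R$ for small $\epsilon$, and $X$ is $\Q$-factorial. This places us in the setting of Notation \ref{n pl MMP} for the birational contraction $f$, with $N := L$, which is $f$-ample, nef and Cartier. By Lemma \ref{l nef pl decompo}, some step $\psi : V_i \to W_i$ (or $\psi'_j$) of the pl-decomposition has $N$ descending to a Cartier divisor $N_{V_i}$ that is nef and $\psi$-ample. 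By Lemma \ref{l pl MMP1}, the exceptional locus of that step lies in a $\psi$-anti-ample prime divisor $S$ contained in the reduced boundary, so $S$ is a component of coefficient one of the lc boundary $E_{V_i}+H_{V_i}+B_{V_i}$.

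The key step is to apply adjunction to $S$. Let $S^N$ be the normalisation of $S$ and write $(K_{V_i} + \Gamma)|_{S^N} = K_{S^N} + \Gamma_S$ for the relevant boundary $\Gamma$. The pair $(S^N, \Gamma_S)$ is an lc surface pair. The restriction $N_{V_i}|_{S^N}$ is ample over $\psi(S)$, since $\Ex(\psi) \subset S$ and $N$ is $\psi$-ample. Lemma \ref{l surface 3L} then gives that $K_{S^N} + \Gamma_S + 3N|_{S^N}$ is nef over $\psi(S)$. Hence $(K_{V_i} + \Gamma + 3N_{V_i}) \cdot C \geq 0$ for every $\psi$-contracted curve $C$, and every such curve lies in $S$.

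To reach the contradiction I must compare $K_{V_i} + \Gamma + 3N_{V_i}$ with the pullback of $K_X + \Delta' + 3L$. On $V$, $K_V + E_V + H_V + B_V$ differs from $\mu_X^*(K_X + \Delta')$ by exceptional terms and $H_V$, and $H_V \equiv_Z -\sum b_\lambda E_\lambda$. This is the step I expect to be the main obstacle: a step of the auxiliary MMP is negative for the auxiliary divisor, not obviously for $K + 3N$. To get around it, I would run the pl-decomposition MMPs as MMPs for $K + \Gamma + 3N$ rather than for $K + \Gamma$. Since $3N_V$ is a pullback and is numerically trivial on every step where it descends, the steps before the first $N$-positive one are the same. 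At the first step where $N$ is $\psi$-ample, that step is $(K+\Gamma)$-negative. Its $(K+\Gamma+3N)$-negativity follows because $K+\Gamma+3N$ on $V_i$ is the pushforward of $\mu_X^*(K_X+\Delta'+3L)$ plus effective exceptional terms that are either contracted already or handled by the scaling. This contradicts the nefness obtained above. Hence $L \cdot R = 0$. Then $L \equiv_f 0$, and $(X, \Delta')$ is klt with $-(K_X+\Delta')$ $f$-ample. Theorem \ref{t nume triv descent} gives $L \sim f^*L_Z$, and $L_Z$ is nef because $L$ is nef and $f$ is surjective.
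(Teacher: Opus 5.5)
Your route is the paper's: perturb to a klt pair, decompose $f$ into pl steps (Notation~\ref{n pl MMP}), take the step supplied by Lemma~\ref{l nef pl decompo}, restrict to the $\psi$-anti-ample boundary component $S$ of Lemma~\ref{l pl MMP1}, contradict Lemma~\ref{l surface 3L}, and finally descend via Theorem~\ref{t nume triv descent}. You also correctly isolate the delicate point, which the paper's proof compresses into ``running MMPs as in Notation~\ref{n pl MMP}''. The contradiction needs a pl step that is both $N$-positive and $(K+\Gamma+3N)$-negative. Lemma~\ref{l nef pl decompo}, as proved, only gives an $N$-positive step of the MMPs for $K+H+E+B$ and $K+E+B$.

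Your last paragraph does not resolve this, for either kind of MMP. For a step of the untwisted MMPs, the reason you give proves nothing. The push-forward of $\mu_X^*(K_X+\Delta'+3L)$ to $V_i$ need not be anti-nef over $Z$. The exceptional terms, e.g.\ $\sum_\lambda(a_\lambda-b_\lambda)E_{V_i,\lambda}$ coming from $H_V\equiv_Z-\sum_\lambda b_\lambda E_{V,\lambda}$, have no sign on $R_i$. For MMPs run for $K+\Gamma+3N$, negativity is automatic, but now the existence of an $N$-positive step is missing. Lemma~\ref{l nef pl decompo} rests on Lemma~\ref{l pl MMP1}(3),(4), i.e.\ on the end result of the untwisted MMPs. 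Saying ``the steps before the first $N$-positive one are the same'' does not transfer it: a twisted MMP could a priori terminate, with $K+\Gamma+3N$ nef over $Z$, before meeting any $N$-positive ray.

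The gap closes by redoing Lemma~\ref{l pl MMP1} for the twisted MMPs: first for $K+H+E+B+3N$ over $Z$, then for $K+E+B+3N$ with scaling of $H$, each run up to the first $N$-positive step. While $N$ is nef, every $(K+\Gamma+3N)$-negative ray is $(K+\Gamma)$-negative, so these steps exist and $N$ keeps descending.

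\begin{itemize}
\item Parts (1) and (2) hold verbatim. Part (1) works for any extremal ray over $Z$, and (2) only uses $H$-positivity.
\item For $(\star)$, set $M_V:=-\mu_X^*(K_X+\Delta'+3L)$, which is nef over $Z$. Then $K_V+E_V+B_V+3N_V+M_V\sim_{\Q}\sum_\lambda a_\lambda E_{V,\lambda}$ with all $a_\lambda>0$. Push this to the end result $V''$, where $K+E+B+3N$ is nef over $Z$; the negativity lemma on a common resolution gives $\sum_\lambda a_\lambda E_{V'',\lambda}=0$.
\item Now suppose no step were $N$-positive. In the divisorial case, $V''=Z$ since $Z$ is $\Q$-factorial, so $N_V$ descends to $Z$, giving $L\equiv_f 0$.
\item In the flipping case, $g\colon V''\to Z$ is small with $\rho(V''/Z)=1$, and $K_{V''}+B_{V''}+3N_{V''}$ is $g$-nef. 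So $V''\not\cong X$ over $Z$, since $K_X+\Delta'+3L$ is $f$-anti-ample. Hence the nef strict transform $N_{V''}$ of $L$ is not $g$-ample (otherwise both would be $\operatorname{Proj}_Z\bigoplus_m f_*\MO_X(mL)$). It is therefore $g$-trivial, which again forces $L\equiv_f 0$.
\end{itemize}

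So an $N$-positive, $(K+\Gamma+3N)$-negative pl step exists, and your adjunction argument with Lemma~\ref{l surface 3L} then finishes the proof.
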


\begin{proof}
We may assume that $(X, \Delta)$ is klt. 
It is enough to show that $L \equiv_f 0$ (Theorem \ref{t nume triv descent}). 
Suppose that $L \not\equiv_f 0$, i.e., $L$ is $f$-ample. 
Taking a suitable resolution and running MMPs as in Notation \ref{n pl MMP}, 
Lemma \ref{l nef pl decompo} enables us to find 
a three-dimensional $\Q$-factorial projective plt pair $(U, S+B)$, 
a nef Cartier divisor $L_U$ on $U$, 
and a birational contraction $\psi : U \to W$ of a $(K_U+S+B+3L_U)$-negative extremal ray such that 
$-S$ and $L_U$ are $\psi$-ample. 
Since $-S$ is $\psi$-ample, the Stein factorisation 
$\psi_S : S \to T$ of $S \hookrightarrow U \xrightarrow{\psi} W$ is a nontrival contraction. 
This contradicts Lemma \ref{l surface 3L}, because the pair $(S, B_S)$ 
defined by the adjunction $(K_U+S+B)|_S = K_S+B_S$ 
is klt. 
\end{proof}

In the proof of our main result (Theorem \ref{t Fujita}), the problem is reduced to the case when $X$ has a $(K_X+5L)$-Mori fibre space $X \to B$. 
We separately treat the two cases $\dim B > 0$ (Lemma \ref{l easy MFS})  and $\dim B =0$ (Proposition \ref{p key Fujita}).

\begin{lem}\label{l easy MFS}
Let $(X, \Delta)$ be a three-dimensional
projective $\Q$-factorial  dlt pair and 
let $f: X \to B$ be a $(K_X+\Delta)$-negative Mori fibre space with $\dim B> 0$. 
Take a nef and big Cartier divisor $L$ on $X$. 
Then $K_X+\Delta+3L$ is $f$-nef. 
\end{lem}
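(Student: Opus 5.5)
Since $f$ is the contraction of a single extremal ray, $\rho(X/B)=1$ and $K_X+\Delta+3L$ is $f$-nef precisely when it is nonnegative on that ray. The plan is to pass to the generic fibre of $f$ and reduce the statement to the lower-dimensional results of Section \ref{s prelim}. There are two cases, $\dim B=2$ and $\dim B=1$. In both, $L$ is nef and big, so $L$ is not numerically trivial on the fibres of $f$; because $\rho(X/B)=1$, this forces $L$ to be $f$-ample.

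Suppose first that $\dim B=1$. Let $X_\eta$ be the generic fibre of $f$. It is a regular-in-codimension-one, normal, projective surface over the function field $\kappa:=K(B)$, and $K_{X_\eta}=K_X|_{X_\eta}$. Set $\Delta_\eta:=\Delta|_{X_\eta}$; then $(X_\eta,\Delta_\eta)$ is a two-dimensional lc (indeed dlt) pair over the imperfect field $\kappa$, and $L_\eta:=L|_{X_\eta}$ is an ample Cartier divisor. The fibres of $f$ are numerically equivalent to one another, so $f$-nefness can be checked on curves in $X_\eta$; this is the standard fact that an extremal-ray contraction has a numerically determined relative cone. It therefore suffices to show that $K_{X_\eta}+\Delta_\eta+3L_\eta$ is nef. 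Since $\rho(X_\eta)=1$ (or at any rate $\rho(X/B)=1$), nefness is equivalent to pseudo-effectivity. By Proposition \ref{p surface Fujita imperf}, $K_{X_\eta}+3L_\eta$ is pseudo-effective, and adding the effective divisor $\Delta_\eta$ preserves this. Alternatively one can apply Lemma \ref{l surface 3L} directly, after base change to $\overline{\kappa}$ and normalisation, using \cite{Tan18b} to control the canonical divisor under this base change; the published route is the one through Proposition \ref{p surface Fujita imperf}.

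Suppose next that $\dim B=2$. Now $X_\eta$ is a regular conic over $\kappa=K(B)$, with $-(K_X+\Delta)\cdot X_\eta>0$, so $-K_{X_\eta}$ has degree $2$ (in terms of $H^0(\MO)$-normalised degree). The divisor $L_\eta$ is Cartier and ample, so $\deg L_\eta\geq 1$; more precisely, $\deg L_\eta$ is a positive multiple of the minimal degree of a line bundle on the conic. Hence
\[
(K_X+\Delta+3L)\cdot X_\eta \geq -2+3>0.
\]
The point needing care is that $\deg K_{X_\eta}=-2$ relative to the base field $H^0(X_\eta,\MO)$, and that degrees of Cartier divisors are computed over that field; so the inequality holds, and the general fibre class spans the ray.

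I expect the main obstacle to be justifying that the generic fibre detects $f$-nefness, together with the behaviour of Proposition \ref{p surface Fujita imperf} on a possibly non-normal base change. Both are avoided by working over the generic point, where normality is preserved by localisation, and by using $\rho(X/B)=1$ so that a single numerical inequality on one curve class suffices.
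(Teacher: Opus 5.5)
Your proposal is correct and follows essentially the same route as the paper: $\rho(X/B)=1$ and the $f$-ampleness of $L$ reduce the claim to the generic fibre. There the case $\dim B=2$ is the degree count $-2+3\cdot 1>0$ on a regular conic, and the case $\dim B=1$ follows from Proposition \ref{p surface Fujita imperf}, since an $f$-anti-ample $K_X+\Delta+3L$ would make $K_{X_\eta}+\Delta_\eta+3L_\eta$ anti-ample rather than pseudo-effective. The only cosmetic difference is that the paper argues by contradiction after discarding $\Delta$ at the outset, while you argue directly and keep the effective term $\Delta_\eta$ until the end.
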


\begin{proof}
We may assume that $(X, \Delta)$ is klt. 
Suppose that $K_X+3L$ is not $f$-nef. 
Let us derive a contradiction. 
By $\rho(X/B)=1$, $K_X+3L$ is $f$-anti-ample. 
Let $X_{\eta}$ be the generic fibre of $f: X \to B$. 
Then both $L_{\eta}$ and $-(K_{X_{\eta}} + 3L_{\eta})$ are ample. 
If $\dim B =2$, then $X_{\eta}$ is a regular conic, which leads to the following contradiction: 
\[
0 > \deg(K_{X_{\eta}} + 3L_{\eta}) \geq -2 + 3 \cdot 1 = 1. 
\]
Hence we get $\dim B =1$. 
In this case, we get a contradiction by 
Proposition \ref{p surface Fujita imperf}. 
\end{proof}

\begin{prop}\label{p key Fujita}
Assume $p>5$. 
Let $X$ be a projective $\Q$-factorial terminal threefold such that 
$-K_X$ is ample and $\rho(X)=1$. 
Let $L$ be an ample Cartier divisor on $X$. 
Then $K_X+5L$ is nef. 
\end{prop}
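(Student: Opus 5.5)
We argue by contradiction, following the outline in the introduction. Since $\rho(X)=1$ and $L$ is ample, $K_X+5L$ is nef if and only if it is pseudo-effective. So suppose that $A:=-(K_X+5L)$ is ample, as a $\Q$-Cartier Weil divisor.

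\textbf{Step 1 (a singular divisor).} Fix a general smooth point $P\in X$. We run the Shokurov non-vanishing construction \cite[\S 3.5]{KM98}. By Riemann--Roch on the terminal threefold $X$, together with Kawamata--Viehweg-type vanishing for $mL$ twisted by multiples of $A$ (available here because $-K_X$ is ample and $p>5$, or via the $\chi$-count alone when $m\gg0$), the dimension $h^0(X,m(2L+\tfrac12 A))$ grows like $\tfrac{m^3}{6}(2L+\tfrac12 A)^3$. Imposing vanishing of order $>2m$ at $P$ costs about $\tfrac{(2m)^3}{6}$ conditions, so it suffices to check $(2L+\tfrac12A)^3>8$. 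Since $2L+\tfrac12 A\equiv \tfrac12(-K_X)-\tfrac12 L$, we should instead choose the coefficients so that the class is a positive combination of $L$ and $-K_X$ whose cube exceeds $8$. Using $L^3\ge1$ and $-K_X=5L+A$ with $A$ ample, this produces an effective $\Q$-divisor $E\sim_\Q 2L+\tfrac12A$ with $\mult_PE>2$.

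\textbf{Step 2 (three cases).} Let $\lambda=\mathrm{lct}(X;E)$. We split into the cases (I) klt, (II) lc but not dlt, and (III) dlt but not klt.

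In case (III), take a prime divisor $S\subset\rdown{\lambda E}$. Adjunction gives $K_S+B_S\sim_\Q (K_X+\lambda E)|_S$, and
\[
K_X+\lambda E\equiv -5L-A+\lambda(2L+\tfrac12A)
\]
is strictly negative against $L|_S$. We get a contradiction by comparing with Proposition \ref{p surface Fujita imperf} or Lemma \ref{l surface 3L} on the normalisation of $S$. To make this work we may need $\tfrac12 A$ to absorb the remaining part, so that $K_S+B_S+3L|_S$ is anti-ample.

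In cases (I) and (II), we build $f:X'\to X$: the blowup at $P$ in case (I), using $\mult_PE>2$ to get discrepancy $2-\mult_P E<0$, and a dlt modification of $(X,\lambda E)$ in case (II). This yields a $\Q$-factorial dlt pair $(X',\Delta')$ with $K_{X'}+\Delta'+sf^*L\equiv0$ for some $s>3$. Here $s$ comes from $K_X+\lambda E+(5-2\lambda)L\equiv -(1-\tfrac{\lambda}{2})A$; the ample remainder $A$ is absorbed into $\Delta'$ as a general member of a multiple, so $s\ge 3$ with strict inequality coming from $\lambda\le1$.

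\textbf{Step 3 (extremal ray).} Since $f^*L$ is nef but not ample, $K_{X'}+\Delta'+(s-\epsilon)f^*L\equiv-\epsilon f^*L$ is not nef. Take a negative extremal ray $R$ with $f^*L\cdot R>0$ and its contraction $g$. If $g$ is birational, Proposition \ref{p 3L MMP step}, applicable because $s-\epsilon>3$, shows $f^*L\equiv_g0$, a contradiction. If $g$ is a Mori fibre space with positive-dimensional base, Lemma \ref{l easy MFS} gives a contradiction. If the base is a point, then $\rho(X')=1$, which is impossible when $f$ is a nontrivial blowup or dlt modification; the case $f=\mathrm{id}$ must be handled directly through the exceptional divisor.

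\textbf{Main obstacle.} The hardest step is the numerical bookkeeping in Step 1 and Step 2: arranging that the multiplicity exceeds $2$ while the residual coefficient $s$ strictly exceeds $3$, with the leftover ample part made effective without destroying dlt-ness. The first thing to try is taking $E$ in $|m(2L+\tfrac12A)|$ exactly as in the introduction, and checking the volume inequality $(2L+\tfrac12A)^3>8$ using $-K_X=5L+A$ and the integrality of $L^3$.
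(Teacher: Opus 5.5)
Your strategy is the paper's: a Shokurov-type divisor $E\sim_{\Q}2L+\frac12A$ with $\mult_PE>2$, the trichotomy (I)--(III), adjunction plus Lemma~\ref{l surface 3L} in case (III), and in cases (I)/(II) a blowup or dlt modification followed by the extremal-ray analysis via Proposition~\ref{p 3L MMP step} and Lemma~\ref{l easy MFS}. Two of your deviations are harmless, because $\lambda\le1$ only improves every inequality:
\begin{itemize}
\item You work with $\lambda E$ directly, whereas the paper completes it via \cite{Tan17} to $\lambda D_1+D_2\sim_{\Q}2L+\frac12A$.
\item You apply Lemma~\ref{l surface 3L} on the normalisation of a component of $\rdown{\lambda E}$, whereas the paper first perturbs to a plt pair. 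In case (III) you get $K_{S^\nu}+B_S+3L|_{S^\nu}\equiv-\bigl((2-2\lambda)L+(1-\frac{\lambda}{2})A\bigr)|_{S^\nu}$, which is anti-ample.
\end{itemize}

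\textbf{The gap: your justification of $s>3$.} If you absorb the ample remainder $(1-\frac{\lambda}{2})A$ into $\Delta'$, you get exactly $s=5-2\lambda$. This equals $3$ in case (I), where $E$ has coefficient $1$, and in case (II) whenever $\lambda=1$. Then $K_{X'}+\Delta'+3f^*L\equiv0$, there is no $(K_{X'}+\Delta'+3f^*L)$-negative extremal ray, and your Step 3 collapses. \emph{Strict inequality from $\lambda\le1$} is not a valid reason.

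\textbf{The fix.} The strictness must come from $A$ itself, via $\rho(X)=1$. Write $A\equiv aL$ with $a>0$ and keep $A$ in the class rather than in the boundary. With $\Delta'$ defined by $K_{X'}+\Delta'=f^*(K_X+\lambda E)$, you get $K_{X'}+\Delta'+sf^*L\equiv0$ for $s=5-2\lambda+(1-\frac{\lambda}{2})a>3$. Then $K_{X'}+\Delta'+3f^*L\equiv-(s-3)f^*L$ is not nef, since $f^*L$ is big and so not numerically trivial. The cone theorem gives a $(K_{X'}+\Delta')$-negative ray $R$ with $f^*L\cdot R>0$, and it is automatically $(K_{X'}+\Delta'+3f^*L)$-negative. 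This is what the paper does; note that what matters is that $f^*L$ is not numerically trivial, not that it is not ample.

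\textbf{Smaller points.}
\begin{itemize}
\item In Step 1 there is nothing to rearrange. $2L+\frac12A$ is already a positive combination of the ample classes $L$ and $A$, and $(2L+\frac12A)^3>(2L)^3=8L^3\ge8$ because all cross terms are positive. Asymptotic Riemann--Roch with Serre vanishing for $m\gg0$ suffices. Kawamata--Viehweg vanishing is neither needed nor available here.
\item In Step 3 the case $f=\mathrm{id}$ never occurs. In (I), $f$ is a blowup. In (II), a dlt modification of a non-dlt pair cannot be an isomorphism. Since $X$ is $\Q$-factorial, a non-isomorphic projective birational morphism onto $X$ must extract a divisor. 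Hence $\rho(X')\ge2$, and $g$ cannot contract $X'$ to a point.
\end{itemize}
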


\begin{proof}
Suppose that $K_X+5L$ is not nef. 
Let us derive a contradiction. 
Set $A :=-(K_X+5L)$, which is an ample Weil divisor. 
Fix a smooth closed point $P \in X$. 

\setcounter{step}{0}

\begin{step}\label{s1 key Fujita}
There exists an effective $\Q$-divisor $D$ on $X$ such that 
$D \sim_{\Q} 2L+ (1/2)A$  and one of (I)--(III) holds. 
\begin{enumerate}
\item[(I)] $(X, D)$ is klt and $\mult_PD >2$. 
\item[(II)] $(X, D)$ is lc but not dlt.
\item[(III)] $(X, D)$ is dlt but not klt. 
\end{enumerate}
\end{step}

\begin{proof}[Proof of Step \ref{s1 key Fujita}]
By the proof of Shokurov's non-vanishing theorem \cite[\S 3.5]{KM98}, 
there exists  an effective $\Q$-divisor $D_1$ on $X$ such that 
$(2L+ (1/2)A) \sim_{\Q} D_1$ and $\mult_PD_1 >2$. 
If $(X, D_1)$ is lc, then one of (I)--(III) holds by setting $D:= D_1$. 
Hence we may assume that $(X, D_1)$ is not lc. 
Let $\lambda$ be the log canonical threshold of $(X, D_1)$. 
Hence $\lambda$ is the rational number such that $0 < \lambda < 1$ and 
$(X, \lambda D_1)$ is lc but not klt. 
By \cite[Theorem 1]{Tan17}, we can find an effective $\Q$-divisor $D_2$ such that $D_2 \sim_{\Q} (1-\lambda)(2L+ (1/2)A)$ and $(X, \lambda D_1 + D_2)$ is lc but not klt. 
Then (II) or (III) holds by setting $D := \lambda D_1 + D_2$. 
This completes the proof of Step \ref{s1 key Fujita}. 
\end{proof}

\begin{step}\label{s2 key Fujita}
(III) does not hold.  
\end{step}

\begin{proof}[Proof of Step \ref{s2 key Fujita}]
Suppose that (III) holds. 
After a suitable perturbation, there exist 
an effective $\Q$-divisor $D'$ on $X$ and a rational number $0 <t <1$ 
such that $2L + tA \equiv D'$, $(X, D')$ is plt but not klt, and 
$S := \rdown{D'}$ is a prime divisor. 
We have $D = S+B$ for the fractional part $B := \{D\}$ of $D$. 
Let $B_S$ be the effective $\Q$-divisor on $S$ 
defined by the adjunction $(K_X+S+B)|_S = K_S + B_S$ \cite{Kol13}. 
Then $(S, B_S)$ is klt. 
It follows from  $2L + tA \equiv D' =S+B$  that  
\[
0 \equiv (K_X+5L+A)|_S \equiv 
(K_X + S+ B + 3L + (1-t)A)|_S \equiv 
K_S +B_S+ (3L +(1-t)A)|_S, 
\]
which contradicts Lemma \ref{l surface 3L}. 
This completes the proof of Step \ref{s2 key Fujita}.  
\end{proof}

\begin{step}\label{s3 key Fujita}
If  (I) or (II) holds, then 
there exists a birational morphism $f : X' \to X$ and 
an effective $\Q$-divisor $\Delta'$ on $X'$ such that 
\begin{enumerate}
\item $(X', \Delta')$ is a three-dimensional projective $\Q$-factorial dlt pair, 
\item $\rho(X')>1$, and 
\item $K_{X'}+ \Delta'+ sf^*L \equiv 0$ for some rational number $s >3$. 
\end{enumerate}
\end{step}

\begin{proof}[Proof of Step \ref{s3 key Fujita}]
Assume that (I) holds. 
Let $f: X' \to X$ be the blowup at $P$. 
Clearly (2) holds. 
Let $\Delta'$ be the $\Q$-divisor defined by 
\[
K_{X'}+ \Delta' =  f^*(K_X+D). 
\]
By $\mult_P D >2$, $\Delta'$ is effective. 
Since $(X, D)$ is klt, (1) holds. 
It holds that  
\[
K_{X'} + \Delta'  + f^*( 3L + (1/2)A)  \equiv 
f^*(K_X+ 5L+A) \equiv 0. 
\]
Thus (3) holds. 
This completes the proof of Step \ref{s3 key Fujita} for the case when (I) holds.

Assume that (II) holds. 
By \cite[Corollary 3.6]{HNT20}, 
there exists a birational morphism $f : X' \to X$ such that 
$X'$ is a projective $\Q$-factorial threefold, 
the $\Q$-divisor $\Delta'$ defined by $K_{X'}+\Delta' = f^*(K_X+D)$ is effective, and $(X', \Delta')$ is dlt. 
In particular, (1) and (2) holds. 
The remaining property (3) holds by 
\[
K_{X'}+ \Delta'+ f^*( 3L + (1/2)A) \equiv 
f^*(K_X + D + 3L + (1/2)A) \equiv 0. 
\]
This completes the proof of Step \ref{s3 key Fujita} for the case when (II) holds. 
\qedhere
\end{proof}

\begin{step}\label{s4 key Fujita}
Neither (I) nor (II) holds. 
\end{step}

\begin{proof}[Proof of Step \ref{s4 key Fujita}]
Suppose that one of (I) and (II) holds. 
Let $f: X' \to X$ and $\Delta'$ be as in Step \ref{s3 key Fujita}. 
By $K_{X'}+\Delta' +3L \equiv -(s-s)f^*L$, there is a 
$(K_{X'}+\Delta'+3L)$-negative extremal ray $R$ of $\overline{\NE}(X')$. 
We have $f^*L \cdot R>0$. 
Let $g: X' \to X''$ be its contraction. 
By $\rho(X')>\rho(X)=1$, we get $\dim X'' >0$. 
If $\dim X'' =3$ (resp.\ $\dim X''<3$), then we would get a contradiction 
by Proposition \ref{p 3L MMP step} (resp.\ Lemma \ref{l easy MFS}). 
This completes the proof of Step \ref{s4 key Fujita}. 
\end{proof}

We get a contradiction by Step \ref{s1 key Fujita}, Step \ref{s2 key Fujita}, and Step \ref{s4 key Fujita}. 
This compltes the proof of Proposition \ref{p key Fujita}. 
\end{proof}

We are ready to prove the main result of this section. 

\begin{thm}\label{t Fujita imperf}\label{t Fujita}
Let $\kappa$ be a field of characteristic $p>5$. 
Let $X$ be a projective normal threefold over $\kappa$ such that $K_X$ is $\Q$-Cartier. 
Take a nef and big Cartier divisor $L$. 
Then $K_X+5L$ is pseudo-effective.
\end{thm}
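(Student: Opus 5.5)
The plan is to reduce to an algebraically closed base field and a $\Q$-factorial terminal threefold, run a $(K_X+5L)$-MMP, and then apply Lemma \ref{l easy MFS} or Proposition \ref{p key Fujita} to the resulting Mori fibre space.

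\emph{Reduction of the base field.} As in Proposition \ref{p surface Fujita imperf}, we pass to an algebraically closed field. Replacing $\kappa$ by $H^0(X,\MO_X)$ if needed, we base change to the algebraic closure $\overline{\kappa}$ and take the normalisation of the reduced structure of the base change. By \cite[Theorem 1.1]{Tan18b}, the canonical divisor of the resulting normal variety $X'$ satisfies $K_{X'}+C\sim \beta^*K_X$ for some effective divisor $C$. The pullback of $L$ stays nef, big and Cartier. Pseudo-effectivity of $K_{X'}+5L'$ therefore gives pseudo-effectivity of $\beta^*(K_X+5L)$, and hence of $K_X+5L$, because $\beta$ is finite and surjective. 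From now on $\kappa=k$ is algebraically closed.

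\emph{Reduction to terminal $\Q$-factorial.} Take a resolution $\mu:Y\to X$. Write $K_Y=\mu^*K_X+\sum a_iE_i$, so that $\mu_*(K_Y+5\mu^*L)=K_X+5L$. Pseudo-effectivity is preserved under push-forward, so it is enough to treat the smooth projective threefold $Y$ with the nef and big Cartier divisor $\mu^*L$. We may therefore assume $X$ is smooth, in particular $\Q$-factorial and terminal.

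\emph{Running the MMP.} Suppose that $K_X+5L$ is not pseudo-effective. We run a $(K_X+5L)$-MMP using Theorem \ref{t lc mmp}; concretely, this is a $K_X$-MMP whose steps are all $(K_X+5L)$-negative, and since $L$ is nef each step is $K_X$-negative. At every birational step the ray is $(K_X+3L)$-negative, because $K_X+3L=(K_X+5L)-2L$ and $L$ is nef. Proposition \ref{p 3L MMP step} then gives $L\sim f^*L_Z$ with $L_Z$ nef and Cartier. Since $f$ is birational, $L_Z$ is also big. For flips, $L$ is trivial on the flipping ray, so it descends to $Z$ and pulls back to the flipped variety as a nef, big Cartier divisor. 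Terminality and $\Q$-factoriality are preserved along a $K_X$-MMP. Pseudo-effectivity is unchanged along the MMP, so the program ends in a Mori fibre space $\pi:X_\ell\to B$ on which $K+5L$ is $\pi$-anti-ample.

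\emph{The Mori fibre space.} If $\dim B>0$, Lemma \ref{l easy MFS} with $\Delta=0$ shows that $K+3L$ is $\pi$-nef. Since $L$ is nef, $K+5L$ is then $\pi$-nef as well, which is a contradiction. If $\dim B=0$, then $\rho=1$, $-K$ is ample, and $L$ is ample because it is nef and big with Picard number one. Proposition \ref{p key Fujita} then says $K+5L$ is nef, contradicting $\rho=1$ together with $K+5L$ not pseudo-effective.

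\emph{Main obstacle.} The step I expect to be most delicate is the reduction from an arbitrary field. One has to make sure that $L$ stays Cartier, nef and big after base change, and that the canonical divisor comparison of \cite{Tan18b} really yields an effective correction $C$, also in the case where $X$ is not geometrically reduced. The MMP part is a direct application of results already established in the paper.
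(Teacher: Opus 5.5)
Your proof is correct and follows the paper's argument: reduce to $\kappa=\overline{\kappa}$ via \cite[Theorem 1.1]{Tan18b}, pass to a terminal $\Q$-factorial model, run the $L$-trivial $(K_X+5L)$-MMP using Proposition \ref{p 3L MMP step}, and rule out the resulting Mori fibre space by Lemma \ref{l easy MFS} and Proposition \ref{p key Fujita}. The one difference is the order of the reductions: the paper desingularises over $\kappa$ first so that the normalised base change is $\Q$-factorial \cite[Lemma 2.2, Lemma 2.5]{Tan18b}, whereas you resolve after base change, which also works because you only push forward. Two small corrections: $K_{X'}$ need not be $\Q$-Cartier, so read pseudo-effectivity of $K_{X'}+5L'$ through push-forward from the resolution; and $X_{\overline{\kappa}}\to X$ is integral rather than finite, which does not affect the descent of pseudo-effectivity.
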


\begin{proof}
The proof consists of two steps. 

\setcounter{step}{0}

\begin{step}\label{s1 Fujita}
The assertion of Theorem \ref{t Fujita imperf} holds if $\kappa$ is algebraically closed. 
\end{step}

\begin{proof}[Proof of Step \ref{s1 Fujita}]
Replacing $X$ by a relatively minimal model over $X$, we may assume that $X$ is $\Q$-factorial and terminal. 
Suppose that $K_X+5L$ is not pseudo-effective. 
It suffices to derive a contradiction. 
Running  a $(K_X+5L)$-MMP, we may assume that 
there exists a $(K_X+5L)$-Mori fibre space $\pi: X \to B$ 
(Proposition \ref{p 3L MMP step}). 
Then the assertion holds by Lemma \ref{l easy MFS} ($\dim B>0$) and 
Proposition \ref{p key Fujita} ($\dim B=0$). 
This completes the proof of Step \ref{s1 Fujita}. 
\end{proof}

\begin{step}\label{s2 Fujita}
The assertion of Theorem \ref{t Fujita imperf} holds 
without any additional assumptions. 
\end{step}

\begin{proof}[Proof of Step \ref{s2 Fujita}]
Taking a desingularisation of $X$, we may assume that $X$ is regular. 
Replacing $\kappa$ by $H^0(X, \MO_X)$, the problem is reduced to the case when $H^0(X, \MO_X)= \kappa$. 
For the algebraic closure $\overline{\kappa}$ of $\kappa$, we set $X_{\overline{\kappa}} := X \times_{\Spec \kappa} \Spec \overline{\kappa}$. 
Take the reduced structure $(X_{\overline{\kappa}})_{\red}$ of $X_{\ol{\kappa}}$ and its normalisation $Y:=(X_{\overline{\kappa}})^N_{\red}$. 
Note that 
$Y$ is a projective $\Q$-factorial threefold over $\overline{\kappa}$ 
\cite[Lemma 2.2, Lemma 2.5]{Tan18b}. 
For the induced composite morphism 
\[
f :Y=(X_{\overline{\kappa}})^N_{\red} \to (X_{\overline{\kappa}})_{\red}\to X, 
\]
we have 
\[
K_Y + C \sim f^*K_X
\]
for some effective Weil divisor $C$ on $Y$ \cite[Theorem 1.1]{Tan18b}. 
Since the divisor $K_Y + C+ 5f^*L \sim f^*(K_X+5L)$ is pseudo-effective 
(Step \ref{s1 Fujita}), 
so is $K_X+ 5L$. 
This completes the proof of Step \ref{s2 Fujita}. 
\end{proof}
Step \ref{s2 Fujita} completes the proof of Theorem \ref{t Fujita}. 
\end{proof}

\section{Mumford vanishing}

\subsection{Big case}

\begin{lem}\label{l H^1 easy}
Let $\kappa$ be a field. 
Let $X$ be a projective normal threefold over $\kappa$. 
Take a nef  Cartier divisor $L$ on $X$ such that $\nu(X, L)\geq 2$.  
Then there exists a positive integer $m_0$ such that 
$H^1(X, -mL)=0$ for every integer $m \geq m_0$. 
\end{lem}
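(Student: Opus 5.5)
We argue by Serre duality combined with a hyperplane-section argument, reducing to an asymptotic vanishing on a surface. First we reduce to the case when $\kappa$ is algebraically closed (or at least infinite) and $X$ is Cohen--Macaulay in codimension where needed. Flat base change to the algebraic closure preserves cohomology dimensions and nefness and numerical dimension, although normality may be lost. To avoid this issue we do not pass to a resolution; instead we work directly with a general hyperplane section. Fix a very ample Cartier divisor $H$ on $X$, which we may take over an infinite field after a base change $\kappa \subset \kappa(t)$, and replace $H$ by a sufficiently high multiple. Consider the exact sequences
\[
0 \to \MO_X(-mL-H) \to \MO_X(-mL) \to \MO_{S}(-mL|_S) \to 0
\]
for a general member $S \in |H|$. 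Since $X$ is normal, it is $S_2$, and a general $S$ is an integral surface satisfying $S_1$, hence reduced. Moreover $L|_S$ is nef and big because $\nu(X,L)\geq 2$ gives $L^2\cdot H>0$.

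We then need two vanishings for $m \gg 0$: $H^1(X,-mL-H)=0$ and $H^1(S,-mL|_S)=0$. Together with the exact sequence, these yield $H^1(X,-mL)=0$.

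For the surface term, we pass to the normalisation $\nu\colon S^N\to S$. Since $S$ is $S_1$, we have $0\to \MO_S\to \nu_*\MO_{S^N}\to Q\to 0$ with $Q$ supported in dimension $\leq 1$. Twisting by $-mL$ gives
\[
H^0(Q(-mL))\to H^1(S,-mL)\to H^1(S^N,-m\nu^*L).
\]
On the normal projective surface $S^N$, $H^1(S^N,-mM)=0$ for $m\gg0$ and $M$ nef and big. This follows by Serre duality from $H^1(S^N, K_{S^N}+mM)=0$ for $m\gg0$, a standard consequence of Fujita vanishing applied on a resolution together with the Leray spectral sequence, or directly from the fact that $H^1(-mM)$ injects into $H^1$ of its restriction to a general ample curve, where the degree is eventually negative. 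The term $H^0(Q(-mL))$ is the main obstacle, since $Q$ is supported on curves where $L$ may be numerically trivial. To sidestep this, I instead choose $S$ general in $|H|$ for $H$ very ample on the normal variety $X$. By Bertini-type results for normal varieties (Seidenberg), $S$ is itself normal over an infinite field, so $Q=0$ and the issue disappears.

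For the threefold term $H^1(X,-mL-H)$, we use Serre duality on the Cohen--Macaulay locus. More simply, we apply Fujita-type vanishing in dual form: $H^1(X,\MO_X(-mL-H))$ is dual to $\mathrm{Ext}^2(\MO_X(-mL-H),\omega_X)=H^2(X,\omega_X(mL+H))$ up to a correction from non-CM points. That correction is controlled by local Ext sheaves supported in dimension $0$, since normal threefolds are CM off finitely many points, and these contribute only $H^0$ terms, which are killed in the relevant degree. Fujita's vanishing theorem gives $H^2(X,\omega_X\otimes\MO_X(H+mL))=0$ for all $m\geq 0$ once $H$ is sufficiently ample. This determines the choice of $H$: fix it first via Fujita vanishing, then choose $S\in|H|$ general and normal, and finally take $m_0$ from the surface vanishing.

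Besides the normality of $S$ discussed above, the other delicate point is making the duality for the non-CM threefold rigorous. The alternative is to replace $H^1(X,-mL-H)$ by an inductive hyperplane argument using $S_2$-ness: $H^1(X,\MO_X(-N))=0$ for $N$ sufficiently ample, by Serre's/Enriques--Severi--Zariski vanishing, which holds since depth $\geq 2$ at closed points. Here the point is that we need the vanishing uniformly in $m$; this holds because $-mL-H$ equals $-H$ twisted by the anti-nef $-mL$, so Fujita's vanishing in its dual form applies uniformly.
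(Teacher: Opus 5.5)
Your proposal has a genuine gap in the surface input, which is exactly where the positive-characteristic content of the lemma lies. You need $H^1(S,-mM)=0$ for $m\gg0$ with $M$ nef and big (not ample) on a normal projective surface, and neither of your justifications works.

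\emph{Fujita vanishing.} It requires a fixed, sufficiently ample twist, whereas $\mu^*M$ is only nef and big. Extracting an ample part via Kodaira's lemma, $kM\sim A+E$, makes the coherent sheaf to which Fujita is applied depend on the multiple. The argument is therefore circular.

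\emph{Restriction to a curve.} For an ample curve $C$, $H^1(C,-mM|_C)$ does not vanish. Negative degree kills $H^0$, not $H^1$, and by Riemann--Roch $h^1$ grows linearly in $m$. Moreover, the claimed injectivity of $H^1(S,-mM)\to H^1(C,-mM|_C)$ would itself need a vanishing of $H^1(S,-mM-C)$ of precisely the kind you are trying to prove.

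The statement you need is a genuine theorem in characteristic $p$, going back to Szpiro and Lewin-M\'en\'egaux. The paper invokes it as \cite[Theorem 3.8]{Tan18m}, in the Serre-dual form $H^1(K_H+mL|_H)=0$ on a regular surface. It is easy only when $M$ is semi-ample (Leray, Grauert--Riemenschneider for surfaces, Serre vanishing), and nef and big divisors on surfaces over general fields need not be semi-ample. Once you cite it, applying it on a resolution $\mu\colon S'\to S$ via $H^1(S,-mM)\hookrightarrow H^1(S',-m\mu^*M)$ and Serre duality on $S'$, your argument closes.

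Apart from this, your reduction is sound and is essentially the Serre dual of the paper's argument, carried out on the normal $X$ rather than on a resolution. Your handling of the non-Cohen--Macaulay points is correct. The dualizing complex $\omega_X^\bullet$ has cohomology only in degree $-3$ (namely $\omega_X$) and degree $-2$ (a finite-length sheaf $T$ supported at the finitely many non-CM points). Hence $H^1(X,\MO_X(-mL-H))^\vee$ is the quotient of $H^2(X,\omega_X(mL+H))$ by the image of $H^0(X,T(mL+H))$, and Fujita vanishing kills it uniformly in $m$. The paper avoids this step by first passing to a resolution, using the Leray injection $H^1(X,-mL)\hookrightarrow H^1(X',-m\mu^*L)$; this also lets it take a regular rather than merely normal hyperplane section.
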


Although this result should be well known, we give a proof for the sake of completeness. 

\begin{proof}
Taking a desingularisation of $X$, we may assume that $X$ is regular. 
Take a general hyperplane section $H$ of $X$, which is a regular prime divisor on $X$. 
We have an exact sequence 
\[
H^1(X, K_H+mL|_H)  \to H^2(X, K_X+ mL) \to H^2(X, K_X+H+mL). 
\]
By the Fujita vanishing theorem, we may assume that $H^2(X, K_X+H+mL) =0$ for every integer $m \geq 0$. 
As $L|_H$ is nef and big, we have $H^1(X, K_H+mL|_H)=0$ for $m \gg 0$ 
\cite[Theorem 3.8]{Tan18m}. 
\end{proof}

\begin{thm}\label{t H^1 nefbig}
Let $\kappa$ be a field of characteristic $p>5$. 
Let $X$ be a projective normal threefold over $\kappa$ such that 
$K_X$ is $\Q$-Cartier and not big. 
Take a nef and big Cartier divisor $L$ on $X$.  
Then $H^1(X, -L)=0$. 
\end{thm}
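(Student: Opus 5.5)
We argue by contradiction, following the overview in the introduction. Suppose $H^1(X,-L)\neq 0$. First we reduce to $X$ regular. Take a resolution $\mu: X'\to X$; we have the injection $H^1(X,-L)\hookrightarrow H^1(X',-\mu^*L)$ from the Leray spectral sequence, since $\mu_*\MO_{X'}=\MO_X$. Moreover $K_{X'}$ is not big, because $\mu_*K_{X'}=K_X$ and bigness is preserved under birational push-forward. So we may assume $X$ is regular. We may also assume $H^0(X,\MO_X)=\kappa$ after replacing $\kappa$.

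By Lemma \ref{l H^1 easy}, $H^1(X,-mL)=0$ for $m\gg0$. Choose the largest $e\ge0$ with $H^1(X,-p^eL)\ne0$, and replace $L$ by $p^eL$; this is still nef, big and Cartier. Then $H^1(X,-L)\neq0$ while $H^1(X,-pL)=0$. From the Frobenius sequence $0\to\MO_X(-L)\to F_*\MO_X(-pL)\to B\to 0$ we get that $H^0(X,B)\to H^1(X,-L)$ is nonzero. A nonzero class in $H^1(X,-L)$ killed by Frobenius corresponds to a nonsplit extension $0\to\MO_X\to E\to\MO_X(L)\to 0$ whose Frobenius pullback splits. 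Equivalently, it gives a section of $\MO_X(pL)$ that is locally a $p$-th power up to an exact differential.

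From this we build the standard purely inseparable degree-$p$ cover $\varphi:X'\to X$, namely the $\mu_p$- or $\alpha_L$-torsor-like cover inside the total space of $\MO_X(L)$, defined by $t^p=s$ locally with a suitable $p$-closed structure. The key computation is that $X'$ is Gorenstein, integral (using nonsplitness), and satisfies $K_{X'}\sim\varphi^*(K_X-(p-1)L)$. This is the Ekedahl-type construction, and I expect it to be the main technical obstacle. The delicate points are integrality and normal-crossing-free control of the canonical sheaf via $\omega_{X'}=\varphi^!\omega_X$ and $\varphi_*\MO_{X'}$ being filtered by $\MO_X(-iL)$ for $0\le i\le p-1$. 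For this step I would follow Ekedahl's foliation/torsor description, verifying integrality by checking that the generic fibre is a field extension.

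Let $\nu:X''\to X'$ be the normalisation and $\psi=\varphi\circ\nu$. By conductor theory, $K_{X''}+D\sim\nu^*K_{X'}$ with $D\ge0$ (the conductor). Hence
\[
K_{X''}+D+5\psi^*L+(p-6)\psi^*L\sim\psi^*K_X.
\]
Now $X''$ is a projective normal threefold over a field of characteristic $p>5$. Passing to a resolution if needed to make $K$ $\Q$-Cartier (using $\mu_*$ as above), Theorem \ref{t Fujita imperf} applied to the nef and big Cartier divisor $\psi^*L$ shows that $K_{X''}+5\psi^*L$ is pseudo-effective. Since $p-6\ge1$ and $\psi^*L$ is big, the left-hand side is big, so $\psi^*K_X$ is big. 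Because $\psi$ is finite surjective, $K_X$ is big, which is a contradiction.
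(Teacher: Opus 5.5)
Your proposal is correct and follows the paper's proof step for step. The steps are: reduction to regular $X$; the choice of $e$ giving $H^1(X,-L)\neq 0=H^1(X,-pL)$; the nontrivial $\alpha_{\mathcal{L}^{-1}}$-torsor $\varphi\colon X'\to X$ with $K_{X'}\sim\varphi^*(K_X-(p-1)L)$, which the paper does not construct but cites from \cite[Proposition 1.1.2]{Mad16}; the conductor formula for the normalisation; and Theorem~\ref{t Fujita imperf} applied to $\psi^*L$.

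Two small corrections. First, the graded pieces of $\varphi_*\MO_{X'}$ are $\MO_X(iL)$ for $0\le i\le p-1$, not $\MO_X(-iL)$; with $\MO_X(-iL)$, duality would give $K_X+(p-1)L$ instead. Second, the resolution detour on $X''$ is unnecessary: $X''$ is finite and purely inseparable over the regular threefold $X$, hence $\Q$-factorial.
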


\begin{proof}
Taking a  desingularisation of $X$, we may assume that $X$ is regular. 
Suppose that $H^1(X, -L) \neq 0$.  
It holds that $H^1(X, -mL)=0$ for $m \gg 0$ (Lemma \ref{l H^1 easy}). 
Replacing $L$ by $p^eL$ for some $e\geq 0$, 
we may assume that $H^1(X, -pL)=0$. For $\mathcal L:=\MO_X(L)$, we can find a non-trivial $\alpha_{\mathcal L^{-1}}$-torsor 
\[
\varphi: X' \to X 
\]
which satisfies the following properties \cite[Proposition 1.1.2]{Mad16}: 
\begin{itemize}
\item $X'$ is a projective Gorenstein threefold, 
\item $\varphi$ is a finite inseparable surjective morphism of degree $p$, and 
\item $K_{X'} \sim \varphi^*(K_X -(p-1)L)$ (more precisely, 
$\omega_{X'} \simeq \varphi^*\MO_X(K_X -(p-1)L)$). 
\end{itemize}
For the normalisation $\nu : X'' \to X'$ of $X'$, we have 
$K_{X''} +D \sim \nu^*K_{X'}$, where 
$D$ is the conductor divisor, which is an effective Weil divisor on $X''$. 
For the induced composite morphism 
\[
\psi: X'' \xrightarrow{\nu} X' \xrightarrow{\varphi} X, 
\]
we get 
\[
K_{X''} +D+ 5\psi^*L +(p-6)\psi^*L  \sim \nu^*(K_{X'} +(p-1) \varphi^* L) 
\sim \nu^*\varphi^*K_X =\psi^*K_X. 
\]
Since $K_{X''} + 5\psi^*L$ is pseudo-effective  (Theorem \ref{t Fujita imperf}), 
it follows from $p-6>0$ that $K_{X''} +D+ 5\psi^*L +(p-6)\psi^*L$ is big. 
Then $\psi^*K_X$ is big, and hence so is $K_X$. 
This is absurd. 
\qedhere
\end{proof}

\begin{thm}\label{t Mukai p=2}
Assume $p=2$. 
Then there exist a smooth projective threefold $X$ 
and an ample Cartier divisor $L$ on $X$ such that 
$\kappa(X, K_X)=2$, $K_X$ is semi-ample, and $H^1(X, -L) \neq 0$. 
\end{thm}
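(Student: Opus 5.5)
Following the intro remark, which attributes the failure of Theorem \ref{intro Mumford} for $p=2$ to \cite{Muk13}, I would build $X$ as a fibration by surfaces over a curve, or as a product with a surface, using Mukai's characteristic-two surface counterexample. The idea is that $H^1(X,-L)\neq 0$ should come from a surface $S$ with an ample $L_S$ and $H^1(S,-L_S)\neq 0$. The divisor $K_X$ should then be pulled back from a surface of general type, so that $\kappa(X,K_X)=2$.

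The candidate I would try first is $X := S \times C$, where $C$ is an elliptic curve over $k$ and $S$ is a smooth projective surface in characteristic two with an ample Cartier divisor $L_S$ such that $H^1(S,-L_S)\neq 0$. The surface $S$ should moreover be chosen with $K_S$ semi-ample and $\kappa(S,K_S)=2$, i.e.\ of general type with nef canonical divisor. I would take $S$ to be one of the Raynaud/Mukai-type surfaces from \cite{Muk13}, constructed as fibrations over curves by covers involving Tango curves. If those examples are only minimal rather than ample-canonical, that is fine, since minimal surfaces of general type have semi-ample $K_S$ in positive characteristic, by Artin/Mumford and Ekedahl.

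Next I set $L := \pr_1^*L_S + \pr_2^*P$ for a point $P\in C$. This divisor is ample on the product. Since $\omega_C\simeq \MO_C$, we get $K_X = \pr_1^*K_S$, which is semi-ample with $\kappa(X,K_X)=\kappa(S,K_S)=2$. For the cohomology I apply Künneth:
\[
H^1(X,-L) \supseteq H^1(S,-L_S)\otimes H^0(C,\MO_C(-P)) \oplus H^0(S,-L_S)\otimes H^1(C,\MO_C(-P)).
\]
The first summand vanishes because $H^0(C,\MO_C(-P))=0$. The second vanishes because $H^0(S,-L_S)=0$. So Künneth actually gives $H^1(X,-L)=0$, and this choice fails. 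I therefore switch to the twisted version: take $X\to S$ to be a $\P^1$-bundle or a product $S\times\P^1$, and use $L=\pr_1^*L_S\otimes\pr_2^*\MO(1)$. Again $H^0(\P^1,\MO(-1))=0=H^1(\P^1,\MO(-1))$ kills everything. The lesson is that a pure product with a positive twist in the second factor cannot work.

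The honest approach is therefore to mimic Mukai's construction directly in dimension three. Take a Tango-type curve $C$ with a divisor $D$ such that $pD\sim K_C$ and $F^*\colon H^1(C,-D)\to H^1(C,-pD)$ non-injective. Build a smooth threefold $X$ as a $p$-cyclic or Raynaud-type cover of a $\P^1$-bundle or $\P^2$-bundle over $C$. For this $X$ the pullback of the Tango divisor sits in $H^1(X,-L)$ for an ample $L$ assembled from the bundle's tautological class and the pullback of $D$. This works in the same way that Mukai's surface $X\to \P(\mathcal E)$ over $C$ works in \cite{Muk13}, now with one extra dimension in the fibre, and this extra dimension is what lets $K_X$ have Iitaka dimension exactly $2$. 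For $p=2$, the branch divisor degrees are chosen so that $K_X$ is the pullback of an ample divisor from a surface, namely the relative $\P^1$-bundle image. That gives semi-ampleness and $\kappa=2$.

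The main obstacle is exactly this construction. One has to check smoothness of the cover for $p=2$, compute $K_X$ via Hurwitz for the cyclic cover, and verify that the nonzero class in $H^1$ survives pullback. For the last point I would first try injectivity of $H^1(Y,-L_Y)\to H^1(X,-L)$ for the finite cover $\pi\colon X\to Y$, using that $\MO_Y$ is a direct summand of $\pi_*\MO_X$ when the degree is prime to $p$ (e.g.\ a double cover is bad for $p=2$, so use a triple cyclic cover). This reduces the question to non-vanishing on the bundle $Y$ over the Tango curve, which is Raynaud's computation.
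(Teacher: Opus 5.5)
\textbf{There is a genuine gap.} You correctly see that products cannot work (your K\"unneth computation is right), and that the example should be Mukai's higher-dimensional version of Raynaud's construction, which is what the paper uses. But your proposal never pins down the threefold, and the one concrete mechanism you offer for $H^1(X,-L)\neq 0$ cannot work.

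You want to pull a nonzero class back along a cover $\pi\colon X\to Y$ of degree prime to $p$, with $q\colon Y\to C$ a projective bundle over the Tango curve, using that $\MO_Y$ is a direct summand of $\pi_*\MO_X$. The source of this injection is zero:
\begin{itemize}
\item For $X$ to be a threefold, $Y$ must be a $\P^2$-bundle over $C$. If $L_Y$ is ample, then $q_*\MO_Y(-L_Y)=0$ and $R^1q_*\MO_Y(-L_Y)=0$, since $H^1(\P^2,\MO(-a))=0$. Hence $H^1(Y,-L_Y)=0$ by Leray.
\item Even in Raynaud's surface case, the base is a ruled surface over $C$, and ruled surfaces satisfy Kodaira vanishing in every characteristic. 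So ``Raynaud's computation'' is not a non-vanishing statement on the bundle.
\end{itemize}
The nonzero class has to be produced on the cover itself; in \cite{Muk13} it comes from the Tango--Raynaud data carried along to each $X_n$. This step therefore has to be replaced, not merely checked.

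The other two required properties are only asserted. A cover of a $\P^2$-bundle over $C$ has no evident morphism to a surface, so ``$K_X$ is pulled back from the relative $\P^1$-bundle image'' has no content as stated. Moreover, arranging that $K_X$ is the pullback of an ample class is exactly what gives semi-ampleness and $\kappa(X,K_X)=2$, and it is the delicate point.

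The paper obtains everything by citation:
\begin{itemize}
\item It takes the TR triple $(X_3,D_3,df_3)$ of \cite[Subsection 2.4]{Muk13}. Here $X_3$ is a smooth cyclic cover of a $\P^1$-bundle over the \emph{surface} $X_2$, not a bundle over the curve, so there is a surjection $X_3\to X_2$.
\item As explained in \cite{Muk13}, $H^1(X_3,-L_3)\neq 0$ for some ample Cartier divisor $L_3$.
\item For $p=2$, \cite[Proposition 2.7]{Muk13} shows that $K_{X_3}$ is the pullback of an ample Cartier divisor on $X_2$. This gives semi-ampleness and $\kappa(X_3,K_{X_3})=2$ at once.
\end{itemize}
To repair your proposal, replace your last two paragraphs with this citation.
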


\begin{proof}
For every integer $n>0$, let $(X_n, D_n, df_n)$ be 
the TR triple as in \cite[Subsection 2.4]{Muk13}. 
As explained there, $X_n$ is an $n$-dimensional smooth projective variety such that $H^1(X_n, -L_n) \neq 0$ for some ample Cartier divisor $L_n$ on $X_n$. 
For the definition of TR triples, see 
\cite[the beginning of Section 2]{Muk13}. 
Set $X :=X_3$ and $L := L_3$. 
Note that we have a surjective morphism $\pi : X  =X_3 \to X_2$ (indeed, $X_{n+1}$ is obtained as a suitable cyclic cover of a $\P^1$-bundle over $X_n$ \cite[the beginning of Subsection 2.1]{Muk13}). 
By \cite[Proposition 2.7]{Muk13}, 
$K_X =K_{X_3}$ is the pullback of an ample Cartier divisor on $X_2$. 
Therefore, $K_X$ is semi-ample and $\kappa(X, K_X) =2$. 
\end{proof}

\subsection{Case $\nu(L)=2$}

\begin{thm}\label{t nu=2 Mumford}
Assume $p>5$. 
Let $X$ be a projective klt threefold such that $-K_X$ is nef.  
Take a nef Cartier divisor $L$ such that $\nu(X, L)=2$. 
Then $H^1(X, -L)=0$. 
\end{thm}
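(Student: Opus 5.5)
We argue by contradiction, following the overview in the introduction. Suppose $H^1(X,-L)\neq 0$. Since $\nu(X,L)=2$, Lemma \ref{l H^1 easy} gives $H^1(X,-mL)=0$ for $m\gg0$. Replacing $L$ by $p^eL$ for a suitable $e\ge 0$, and using that Frobenius pullback induces an injection $H^1(X,-L)\hookrightarrow H^1(X,-pL)$, we may assume $H^1(X,-L)\neq0$ and $H^1(X,-pL)=0$.

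\emph{Step 1: the degree-$p$ cover.} As in the proof of Theorem \ref{t H^1 nefbig}, we take a nontrivial $\alpha_{\mathcal L^{-1}}$-torsor via \cite{Mad16} and normalise it. This gives a finite purely inseparable morphism $\psi:Y\to X$ of degree $p$ from a projective normal threefold $Y$, together with an effective Weil divisor $D$ on $Y$, such that
\[
K_Y+D+(p-1)L_Y\sim \psi^*K_X, \qquad L_Y:=\psi^*L.
\]
If $K_Y+5L_Y$ were pseudo-effective, then $\psi^*K_X-(p-6)L_Y-D$ would be pseudo-effective. Intersecting with $H\cdot L_Y$ for $H$ ample gives $-(p-6)L_Y^2\cdot H\ge0$, because $-K_X$ is nef. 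Since $L_Y^2\cdot H>0$ when $\nu=2$, this is a contradiction. Hence $K_Y+5L_Y$ is not pseudo-effective.

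Take a resolution $Y'\to Y$ and run a $(K_{Y'}+5L_{Y'})$-MMP (Theorem \ref{t lc mmp}). By Proposition \ref{p 3L MMP step}, each step is $L$-trivial, so the pullback of $L$ descends as a nef Cartier divisor. The MMP ends in a Mori fibre space $\rho:Y''\to T$ on which $L_{Y''}$ is $\rho$-numerically trivial, by Lemma \ref{l easy MFS} and Proposition \ref{p key Fujita} (the latter rules out $\dim T=0$, since $L$ would then be ample, contradicting $\nu=2$). By Theorem \ref{t nume triv descent}, $L_{Y''}\sim\rho^*L_T$. Because $\nu(L_T)=2$, $T$ is a surface, $L_T$ is nef and big, and general fibres of $\rho$ are conics, hence $\mathbb P^1$.

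\emph{Step 2: EWM and fibres.} A nef and big divisor on a surface is EWM (Keel), so $L_Y$ and hence $L$ are EWM, since $\psi$ is a universal homeomorphism. Let $f:X\to S$ be the induced contraction onto a two-dimensional algebraic space. Since $-K_X$ is nef, the general fibre of $f$ is a curve with $\deg K\le0$, so it is either $\mathbb P^1$ or an elliptic curve (smooth since $p>3$). The general fibre of the contraction induced by $L_Y$ is birational to a general fibre of $\rho$, hence rational. It dominates the corresponding fibre of $f$ through $\psi$, which is purely inseparable, so by L\"uroth the fibre of $f$ is rational. Therefore $-K_X$ is $f$-big.

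\emph{Step 3: semi-ampleness.} Apply Theorem \ref{t nume triv descent} étale-locally over the schematic locus of $S$, and use the relative basepoint-free theorem near the finitely many non-schematic points. This shows that $L\sim f^*L_S$ with $S$ a projective normal surface and $L_S$ nef and big, in fact ample by the EWM property.

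\emph{Step 4: vanishing.} The Leray sequence gives
\[
0\to H^1(S,-L_S)\to H^1(X,-L)\to H^0(S,R^1f_*\MO_X(-L)).
\]
Here $R^1f_*\MO_X(-L)=R^1f_*\MO_X\otimes\MO_S(-L_S)=0$, by relative Kawamata--Viehweg for klt threefold contractions with $-K_X$ $f$-nef and $f$-big in $p>5$ (as in \cite{Ber21}/\cite{ABL22}). For the first term, by \cite{Eji19} the nefness of $-K_X$ forces $-K_S$ to be pseudo-effective in an appropriate sense, so $K_S$ is not big. Then the surface result \cite[Theorem 3(a)]{Muk13} (with $p>3$) gives $H^1(S,-L_S)=0$. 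We may need to pass to the minimal resolution to apply it; this is harmless since $H^1(S,-L_S)$ injects into $H^1$ on the resolution.

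The main obstacle is Step 3, i.e.\ passing from EWM to a genuine projective base and descent of $L$, where the algebraic-space issues must be handled. Step 4's use of \cite{Eji19} to control $K_S$ is the second delicate point.
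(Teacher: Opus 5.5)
Your proposal follows the paper's proof essentially step for step, so the outline is correct. The steps are: the degree-$p$ torsor cover and its normalisation, non-pseudo-effectivity of $K_Y+5L_Y$, an $L$-trivial MMP ending in a Mori fibre space over a surface, EWM-ness of $L$, rational fibres forcing $-K_X$ to be $f$-big, \'etale-local descent giving semi-ampleness, and then Leray, \cite{Eji19} and Mukai's vanishing on the minimal resolution. The paper differs only cosmetically: it uses \cite[Theorem 25]{BK23} for $R^1f_*\MO_X=0$ and \cite[Theorem 4.6]{CT20} to glue the \'etale-local descents.

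One justification in your reduction step is wrong. Frobenius does \emph{not} in general induce an injection $H^1(X,-L)\to H^1(X,-pL)$. If it did, Lemma \ref{l H^1 easy} would prove the theorem at once. In fact, since $H^0(X,-pL)=0$, the group $H^1_{\mathrm{fl}}(X,\alpha_{\mathcal L^{-1}})$ is exactly the kernel of this map, so the nontrivial torsor you rely on exists precisely because injectivity fails. The reduction you want comes instead from taking $e\geq 0$ maximal with $H^1(X,-p^eL)\neq 0$ (possible by Lemma \ref{l H^1 easy}) and replacing $L$ by $p^eL$.

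A smaller point concerns Lemma \ref{l easy MFS}, which is stated for nef and big $L$. What you actually need is that its proof applies verbatim when $L_{Y''}$ is $\rho$-ample. Also, the case $\dim T=0$ is excluded simply by $\rho(Y'')=1$ and $\nu(L_{Y''})=2$, not by Proposition \ref{p key Fujita}.
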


\begin{proof}
Suppose that $H^1(X, -L) \neq 0$. 
After replacing $L$ by $p^eL$ for some integer $e \geq 0$, 
there exists a finite inseparable surjective morphism $\psi : Y \to X$ of degree $p$ from a projective normal threefold $Y$ such that  
\[
K_Y + D + (p-1) L_Y \sim \psi^*K_X, 
\]
where $L_Y := \psi^*L$ and $D$ is an effective Weil divisor on $Y$ 
(cf.\ the proof of Theorem \ref{t H^1 nefbig}).

\setcounter{step}{0}

\begin{step}\label{s1 nu=2 Mumford}
$K_Y  +5\psi^*L$ is not pseudo-effective. 
\end{step}

\begin{proof}[Proof of Step \ref{s1 nu=2 Mumford}]
If $K_Y+5\psi^*L$ is pseudo-effective, then  we would get the following contradiction for an ample Cartier divisor $A_Y$ on $Y$: 
\[
0 = (K_Y + D + (p-1) \psi^*L - \psi^*K_X) \cdot A_Y^2 \geq (p-6)\psi^*L  \cdot A_Y^2 >0. 
\]
This completes the proof of Step \ref{s1 nu=2 Mumford}. 
\end{proof}

\begin{step}\label{s2 nu=2 Mumford}
$L$ is EWM, and hence so is its pullback $\psi^*L =L_Y$. 
For the contraction $\pi_Y: Y \to S_Y$ to a proper algebraic space $S_Y$ induced by $L_Y$,  
the geometric generic fibre $-Y_{\overline{\eta}}$ of $\pi_Y$ is isomorphic to the projective line $\P^1_{\overline{\eta}}$. 
\end{step}

\begin{proof}[Proof of Step \ref{s2 nu=2 Mumford}]
Take a desingularisation $\mu : Y' \to Y$ of $Y$ and set $L_{Y'} := \mu^*L_Y$. 
Since $K_Y + 5L_Y$ is not pseudo-effective (Step \ref{s1 nu=2 Mumford}), neither is $K_{Y'}+ 5L_{Y'}$. 
We run a $(K_{Y'}+5L_{Y'})$-MMP 
\[
Y' =: Y'_0 \dashrightarrow Y'_1 \dashrightarrow \cdots \dashrightarrow Y'_{\ell} =:Y'', 
\]
where $Y''$ denotes its end result. 
This sequence is a $K_{Y'}$-MMP which is $L_{Y'}$-numerically trivial (Proposition \ref{p 3L MMP step}). 
Hence $L_{Y'}$ descends to its push-forward $L_{Y''}$ on $Y''$. 
In particular, $L_{Y''}$ is nef, $\nu(Y'', L_{Y''})=2$, and 
$\alpha^* L_{Y'} \sim \beta^*L_{Y''}$ for birational morphisms $\alpha: Y''' \to Y'$ and $\beta : Y''' \to Y''$ compatible with $Y' \dashrightarrow Y''$. 
Since $K_{Y'}+5L_{Y'}$ is not pseudo-effective, 
there is a $(K_{Y'}+5L_{Y'})$-negative Mori fibre space $\rho: Y'' \to T$. 
Since we have $L_{Y''} \sim \rho^*L_T$ for some nef Cartier divisor $L_T$ on $T$ and $\nu(T, L_T) =\nu(Y'', L_{Y''})=\nu(X, L)=2$, 
we get $\dim T \geq 2$. 
On the other hand, we have $\dim T < \dim Y'' =3$, as $\rho : Y'' \to T$ is a Mori fibre space. 
Therefore, $\dim T = 2$, and hence $L_T$ is a nef and big Cartier divisor on a projective normal surface $T$. 
Hence $L_T$ is EWM \cite[Theorem 1.9]{Kee99}. 
Then its pullback $\beta^*L_{Y''}$ on $Y'''$ is EWM  \cite[Definition-Lemma 1.0(4)]{Kee99}. 
Hence $L_Y$ is EWM, and hence so is $L$ \cite[Lemma 1.5]{Kee99}.

Let $\pi_{Y}: Y \to S_Y$ and $\pi_{Y''} : Y'' \to S_{Y''}$ be the contractions to two-dimensional proper normal integral algebraic spaces induced by $L_Y$ and $L_{Y''}$, respectively. 
Then the generic fibres of $\pi_{Y}, \pi_{Y''}$, and $\rho: Y'' \to T$ coincide. 
Hence { $-K_{Y_{\xi}}$ is ample, where $\xi\in S_Y$ is the generic point}. 
By $p>2$, we get $Y_{\overline{\xi}} \simeq \P^1_{\overline{\xi}}$.  
This completes the proof of Step \ref{s2 nu=2 Mumford}. 
\end{proof}

Let $\pi:X \to S$ be the contraction to two-dimensional proper normal irreducible algebraic spaces induced by $L$. 
Note that $S$ becomes a normal quasi-projective surface after removing finitely many closed points of $S$. 
In particular, 
for the generic point $\eta$ of $S$, 
the generic fibres $X_{\eta}$ of $\pi$ can be defined scheme-theoretically.

\begin{step}\label{s3 nu=2 Mumford}
$-K_{X_{\eta}}$ is ample. 
\end{step}

\begin{proof}[Proof of Step \ref{s3 nu=2 Mumford}]
We now show that $-K_{X_{\eta}}$ is ample.
Since $-K_X$ is nef, so is $-K_{X_\eta}$. 
Suppose that  $K_{X_\eta} \equiv 0$. 
It suffices to derive a contradiction. 
Then general fibres of $X \to T$ are smooth elliptic curves by $p>3$ 
\cite[Corollary 1.8]{PW22}. 
Since $X \to T$ and $Y \to T_Y$ coincide up to a universal homeomorphism. 
Hence there is a universal homeomorphism $Y_{t'} \to X_t$ between general fibres. 
This is a contradiction, because  
$Y_{t'} = \P^1$ and $X_t$ is a smooth elliptic curve. 
This completes the proof of Step \ref{s3 nu=2 Mumford}. 
\end{proof}

\begin{step}\label{s4 nu=2 Mumford}
$L$ is not semi-ample. 
\end{step}

\begin{proof}[Proof of Step \ref{s4 nu=2 Mumford}]
Suppose that $L$ is semi-ample. 
In this case, $S$ is a projective normal surface. 
Then $-K_X$ is $\pi$-nef and $\pi$-big (Step \ref{s3 nu=2 Mumford}). 
We then get $L \sim \pi^*L_S$ for some nef and big Cartier divisor $L_S$ on $S$ (Theorem \ref{t nume triv descent}). 
Moreover, it follows from $R^1\pi_*\MO_X=0$ \cite[Theorem 25]{BK23} that 
\[
R^1\pi_*\MO_X(-L) \simeq R^1\pi_*\MO_X \otimes \MO_S(-L_S) =0. 
\]
By the exact sequence 
\[
0 \to H^1(S, -L_S) \to H^1(X, -L) \to H^0(S, R^1\pi_*\MO_X(-L)) =0
\]
obtained from the corresponding Leray spectral sequence, 
it suffices to show that $H^1(S, -L_S)=0$. 
It follows from  \cite[Corollary 4.10(1)]{Eji19} that $-K_S$ is pseudo-effective. 
Taking the minimal resolution $\mu : S' \to S$ of $S$, 
$-K_{S'}$ is pseudo-effective, which implies 
$\kappa(S', K_{S'}) \leq 0$ (cf.\ \cite[Lemma 5.4]{GLP15}). 
Therefore, $H^1(S, -L_S) \hookrightarrow H^1(S', -\mu^*L_{S})=0$. 
This completes the proof of Step \ref{s4 nu=2 Mumford}. 
\end{proof}

\begin{step}\label{s5 nu=2 Mumford}
$L$ is semi-ample. 
\end{step}

\begin{proof}[Proof of Step \ref{s5 nu=2 Mumford}]
Fix an \'etale surjective morphism $\wt{S} \to S$ from a finite disjoint union 
$\wt{S}$ of affine normal surfaces. 
For the base change 
\[
\wt{\pi} : \wt{X} := X \times_S \wt{S} \to \wt{S}, 
\]
we see that $-K_{\wt{X}}$ is $\wt{\pi}$-nef and $\wt{\pi}$-big (Step \ref{s3 nu=2 Mumford}). 
For $L_{\wt{X}} := \pr_1^*L$, 
there exists a Cartier divisor $L_{\wt{S}}$ on $\wt{S}$ 
satisfying $L_{\wt{X}} \sim \wt{\pi}^*L_{\wt{S}}$. 
In particular, $L|_{X_s}$ is semi-ample for every point $s \in S$. 
By \cite[Theorem 4.6]{CT20}, we get $L^{\otimes n} \simeq \pi^*L_S$ 
for some $n >0$ and an invertible sheaf $L_S$ on $S$. 
Then $L$ is semi-ample by \cite[Definition-Lemma 1.0(5)]{Kee99}. 
This completes the proof of Step \ref{s5 nu=2 Mumford}. 
\end{proof}
We obtain a contradiction by Step \ref{s4 nu=2 Mumford} and Step \ref{s5 nu=2 Mumford}. 
This completes the proof of Theorem \ref{t nu=2 Mumford}. 
\end{proof}

\begin{thm}\label{t nu=2 Mumford2}
Assume $p>5$. 
Let $(X, \Delta)$ be a three-dimensional projective klt pair 
such that $-(K_X+\Delta)$ is nef and big and every nonzero coefficient $c$ of $\Delta$ satisfies $c > 2/p$.   
Take a nef Cartier divisor $L$ such that $\nu(X, L)=2$. 
Then $H^1(X, -L)=0$. 
\end{thm}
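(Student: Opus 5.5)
We follow the proof of Theorem \ref{t nu=2 Mumford}, with $K_X$ replaced by $K_X+\Delta$ wherever it enters. Suppose $H^1(X,-L)\neq 0$. As in Theorem \ref{t H^1 nefbig}, Lemma \ref{l H^1 easy} lets us replace $L$ by $p^eL$ so that $H^1(X,-pL)=0$. We take a desingularisation only to build the torsor, and otherwise stay on $X$. This produces a degree-$p$ purely inseparable $\psi\colon Y\to X$ with $Y$ normal and
\[
K_Y+D+(p-1)L_Y\sim \psi^*K_X,\qquad L_Y:=\psi^*L,\quad D\geq 0 .
\]
Adding $\psi^*\Delta$ gives $K_Y+D+\psi^*\Delta+(p-1)L_Y\sim_{\Q}\psi^*(K_X+\Delta)$. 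Since $-(K_X+\Delta)$ is nef, intersecting with $A_Y^2$ exactly as in Step \ref{s1 nu=2 Mumford} shows that $K_Y+5L_Y$ is not pseudo-effective. Steps \ref{s1 nu=2 Mumford}--\ref{s2 nu=2 Mumford} then go through verbatim, since they use only $Y$ and Proposition \ref{p 3L MMP step}. Hence $L$ is EWM, and the induced contraction $\pi_Y\colon Y\to S_Y$ has geometric generic fibre $\P^1$.

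Let $\pi\colon X\to S$ be the contraction induced by $L$. The analogue of Step \ref{s3 nu=2 Mumford} is that $-(K_X+\Delta)|_{X_\eta}$ is ample. If not, it is numerically trivial, being nef on a curve. Then $\deg K_{X_\eta}=-\deg\Delta|_{X_\eta}\leq 0$. If $\Delta$ has no horizontal component, the generic fibre is an arithmetic-genus-one curve, and its normalisation is generically smooth elliptic by \cite{PW22} for $p>3$. This contradicts the universal homeomorphism with $\P^1$ coming from $Y$, exactly as before.

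The case with horizontal components of $\Delta$ is the main obstacle, and it is where the hypothesis $c>2/p$ must be used. Here $X_\eta$ has $K_{X_\eta}$ of negative degree, so it is a regular but possibly non-smooth conic over $K(S)$. In characteristic $p>2$ it is in fact geometrically regular, so $-K_{X_\eta}$ is ample anyway. Thus in this case $-(K_X+\Delta)$ may fail to be ample on $X_\eta$, yet $-K_X$ is still $\pi$-big.

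What we actually need downstream is that some boundary $\Delta'$ exists with $(X,\Delta')$ klt and $-(K_X+\Delta')$ $\pi$-nef and $\pi$-big. This is required to apply Theorem \ref{t nume triv descent} and the relative base point freeness over the étale cover $\wt S$. My plan is to obtain it by perturbation: take $\Delta'=(1-\epsilon)\Delta$, so that $-(K_X+\Delta')=-(K_X+\Delta)+\epsilon\Delta$ is $\pi$-big. I expect the condition $c>2/p$ to enter where we must exclude that $-(K_X+\Delta)|_{X_\eta}\equiv 0$ with horizontal $\Delta$. Pulling back to $Y$, the divisor $\psi^*\Delta$ acquires coefficients $\geq pc>2$ along horizontal components, at least when $\psi$ is ramified there, as it is for an inseparable torsor. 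On $Y_\eta\simeq\P^1$ this forces $\deg(K_{Y_\eta}+\psi^*\Delta|_{Y_\eta})>0$. Meanwhile $(p-1)L_Y$ has degree $0$ on the fibre, so $\psi^*(K_X+\Delta)|_{Y_\eta}$ would have positive degree, which contradicts nefness of $-(K_X+\Delta)$.

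Having established $\pi$-bigness, the remaining steps copy Steps \ref{s4 nu=2 Mumford}--\ref{s5 nu=2 Mumford}. First, $L$ is semi-ample by \cite[Theorem 4.6]{CT20}, so $L\sim\pi^*L_S$. Next, $R^1\pi_*\MO_X=0$ by \cite{BK23} applied to the klt pair with relatively nef and big anti-log-canonical divisor. Then $-K_S$ is pseudo-effective by \cite[Corollary 4.10(1)]{Eji19}, applied with $K_X+\Delta$. Therefore $H^1(S,-L_S)=0$, which gives the contradiction.
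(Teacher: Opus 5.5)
\paragraph{The gap is in your analogue of Step \ref{s3 nu=2 Mumford}.} Your reduction and Steps \ref{s1 nu=2 Mumford}--\ref{s2 nu=2 Mumford} are fine, but the argument that $-(K_X+\Delta)$ is ample on $X_\eta$ does not work as written.

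You claim that $\psi^*\Delta$ has coefficient $pc$ along every horizontal component ``as it is for an inseparable torsor''. This is not true in general. A degree-$p$ purely inseparable cover is a foliation quotient $X=Y/\mathcal F$, and a prime divisor is ramified only when its preimage is not $\mathcal F$-invariant. Invariant horizontal divisors do occur. Locally, take $Y=\Spec k[a,b,z]\to X=\Spec k[a^p,b,z]$ over $S_Y=\Spec k[a,b]\to S=\Spec k[a^p,b]$, with $\Gamma=\{z=b\}$. Then $\psi^*\Gamma$ is reduced and has the same fibre degree as $\Gamma$. In that situation $\deg(K_{Y_\eta}+\psi^*\Delta|_{Y_\eta})$ can equal $-2+c<0$, and no contradiction arises.

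Your fallback $\Delta'=(1-\epsilon)\Delta$ does not rescue this. It makes $-(K_X+\Delta')$ $\pi$-big, but not necessarily $\pi$-nef, since $\Delta$ can be negative on vertical curves in its support. So Theorem \ref{t nume triv descent} and \cite{BK23} cannot be applied to $(X,\Delta')$.

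\paragraph{The missing idea is the bigness hypothesis.} You never use that $-(K_X+\Delta)$ is \emph{big}, and that alone gives the Step \ref{s3 nu=2 Mumford} analogue. By Kodaira's lemma, write $-m(K_X+\Delta)\sim A+E$ with $A$ ample and $E\geq 0$. A general fibre of $\pi$ is not contained in $\Supp E$, so $-(K_X+\Delta)$ has positive degree on it. Hence $-(K_X+\Delta)$ is $\pi$-nef and $\pi$-big, and your case distinction on horizontal components is unnecessary.

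\paragraph{Where $c>2/p$ is actually used.} In the paper, the coefficient condition plays no role in Step \ref{s3 nu=2 Mumford}. It is used only to make \cite[Corollary 4.10(1)]{Eji19} applicable to the pair $(X,\Delta)$ in Step \ref{s4 nu=2 Mumford}. You invoke that corollary ``with $K_X+\Delta$'' without checking its hypotheses. With these two corrections, your outline becomes the paper's proof.
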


\begin{proof}
The same argument as in Theorem \ref{t nu=2 Mumford} works 
after replacing $K_X$ by $K_X+\Delta$ suitably. 
Since every nonzero coefficient $c$ of $\Delta$ satisfies $c > 2/p$, 
we can apply 
\cite[Corollary 4.10(1)]{Eji19} in the proof of Step \ref{s4 nu=2 Mumford}. 
\end{proof}

\begin{ex}\label{e Mumford fail any p}
Take a smooth projectuve curve $C$. 
Let $S$ be a smooth projective surface such that there exists an ample Cartier divisor $L_S$ on $S$ satisfying $H^1(S, -L_S) \neq 0$. 
Set $X:= S \times C$ and $L:= \pr_1^*L_S$. 
Then $L$ is a semi-ample Cartier divisor on $X$ such that 
$\nu(X, L) =2$ and $H^1(X, -L) \neq 0$, where 
the latter condition is insured by the injection 
\[
0 \neq H^1(S, -L_S) \hookrightarrow H^1(X, -L)
\]
induced by the corresponding Leray spectral sequence. 
Set $\kappa :=\kappa(X, K_X), \kappa_S := \kappa(S, K_S)$, and 
$\kappa_C:= \kappa(C, K_C)$. 
\begin{enumerate}
\item 
We can find a pair $(S, L_S)$ satisfying the above properties 
and $\kappa_S = 2$. 
Corresponding to $\kappa_C = -\infty, 0, 1$, 
we find an example satisifying $\kappa =-\infty, 2, 3$. 
\item 
Assume that $p =2$ or $p=3$. 
Then we can find a pair $(S, L_S)$ satisfying the above properties 
and $\kappa_S = 1$. 
Corresponding to $\kappa_C = -\infty, 0, 1$, 
we find an example satisfying $\kappa =-\infty, 1, 2$. 
\end{enumerate}
\end{ex}



\subsection{Weak Fano threefolds}
For a smooth Fano threefold $X$ in positive characteristic, 
it is known that $H^{>0}(X, \MO_X)=0$ \cite[Theorem 2.4]{FanoI}
(see also \cite[Corollary 1.5]{SB97} and \cite[Corollary 3.7]{Kaw2}).
The following result is a generalisation of this result to  weak Fano threefolds in characteristic $p>5$. 

\begin{thm}\label{t irreg}
Assume $p>5$. 
Let $X$ be a Gorenstein canonical projective threefold 
such that $-K_X$ is nef and big. 
Then the following hold. 
\begin{enumerate}
\item 
$H^i(X, \MO_X)=0$ for every $i>0$. 
\item $X$ is simply connected, i.e., if $\pi: Y \to X$ is a finite \'etale morphism from a normal threefold $Y$, then $\pi$ is an isomorphism. 
\item 
$\Pic\,X \simeq \Z^{\oplus \rho(X)}$. 
\end{enumerate}
\end{thm}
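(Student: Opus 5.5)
We prove (1), then (2) and (3) from it. Since $X$ is Gorenstein and canonical, it is klt and $K_X$ is Cartier. Because $-K_X$ is nef and big, $K_X$ is not big, so Theorem \ref{t H^1 nefbig} applied to the nef and big Cartier divisor $L := -K_X$ gives $H^1(X, K_X)=0$. By Serre duality (valid as $X$ is Gorenstein, hence Cohen--Macaulay), $H^2(X, \MO_X) \simeq H^1(X, K_X)^\vee = 0$. Likewise $H^3(X, \MO_X) \simeq H^0(X, K_X)^\vee = 0$, since $-K_X$ is big and nonzero pseudo-effective, so $K_X$ has no sections.

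\textbf{Vanishing of $H^1(X,\MO_X)$.} This is the main obstacle. We will show $H^1(X,\MO_X)=0$ by proving that $\Pic^0$ vanishes and that there are no nontrivial $\mu_p$- or $\alpha_p$-torsors. The tangent space of $\Pic_X$ is $H^1(X,\MO_X)$. A nonzero element in the image of the Frobenius-stable part, or a nonzero Frobenius-nilpotent element, gives a nontrivial $\mu_p$- or $\alpha_p$-torsor $Y \to X$, which is a finite inseparable cover of degree $p$. For such a torsor with trivial line bundle, the formula $\omega_{Y} \simeq \varphi^*\omega_X$ holds, exactly as in the computation in the proof of Theorem \ref{t H^1 nefbig} with $L=0$. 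After normalising, we get $K_{Y'}+D \sim \psi^*K_X$, so $-K_{Y'}$ is big.

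Plan A is to run a $K_{Y'}$-MMP on a resolution, as in Step \ref{s2 nu=2 Mumford}, and compare with $X$. This is delicate, so we prefer the following Plan B. Use that klt weak Fano threefolds for $p>5$ are rationally chain connected, or at least that their Albanese is trivial. Smooth-in-codimension-one results give that the Albanese of a resolution $\wt X$ is trivial (via rational chain connectedness of klt Fano-type threefolds), so $\Pic^0_{\wt X,\red}=0$. It then remains to kill the non-reduced part. For a Frobenius-nilpotent class we obtain a nontrivial $\alpha_{\MO}$-torsor. Applying the argument of Theorem \ref{t H^1 nefbig} with $L$ replaced by a multiple of $-K_X$ yields a contradiction with the bigness of $-K_X$. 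Concretely, we take $\mathcal L = \MO_X(-mK_X)$ and use the injection $H^1(X,\MO_X)\hookrightarrow H^1(X, -mK_X)^{\vee\vee}$ via Serre duality with $H^2(X, (m+1)K_X)$. We expect to carry out this step by a Leray/Frobenius iteration: the Frobenius-nilpotent classes inject into $H^1(X,-p^eK_X)$-type groups through Frobenius pushforward, and those groups vanish by Theorem \ref{t H^1 nefbig}.

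\textbf{(2) and (3).} For (2), let $\pi: Y\to X$ be finite \'etale of degree $d$. Then $Y$ is Gorenstein canonical with $-K_Y=\pi^*(-K_X)$ nef and big, so (1) gives $\chi(\MO_Y)=1=\chi(\MO_X)$. Since $\chi(\MO_Y)=d\chi(\MO_X)$ for \'etale covers (Riemann--Roch/flat base change), $d=1$. For (3), the vanishing $H^1(X,\MO_X)=0$ makes $\Pic X$ discrete, and $H^2(X,\MO_X)=0$ makes it unobstructed. We get $\Pic X/\equiv \simeq \Z^{\rho}$ torsion-free modulo numerically trivial classes. A numerically trivial line bundle $N$ is torsion, because $\Pic^0=0$ and $\Pic^\tau/\Pic^0$ is finite. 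A torsion line bundle of order prime to $p$ gives an \'etale cyclic cover, hence is trivial by (2). A $p$-torsion one gives a $\mu_p$-torsor, whose obstruction lies in the Frobenius-fixed part of $H^1(X,\MO_X)=0$. Hence $\Pic X\simeq\Z^{\oplus\rho(X)}$.
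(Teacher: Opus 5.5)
Your computation of $H^2(X,\MO_X)=H^3(X,\MO_X)=0$ and your proof of (2) match the paper. However, the vanishing of $H^1(X,\MO_X)$ has a genuine gap.

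The concrete step of Plan B does not work. There is no injection $H^1(X,\MO_X)\hookrightarrow H^1(X,-mK_X)$: a member $E\in|-mK_X|$ only gives a map whose kernel is the image of $H^0(E,\MO_E(E))$. In any case, Theorem~\ref{t H^1 nefbig} kills $H^1(X,mK_X)=H^1(X,-(-mK_X))$, not $H^1(X,-mK_X)$. Vanishing of the latter would be a Kawamata--Viehweg type statement that is not available here. The ``Frobenius iteration'' is not an argument either: Frobenius-nilpotent classes are exactly those killed by $F^e$, and you give no map under which they survive.

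The missing idea is one you state yourself in (3) but never use. Obstructions to deforming line bundles lie in $H^2(X,\MO_X)$, so in general $h^1(X,\MO_X)-h^2(X,\MO_X)\le\dim\Pic^0_X$. When $H^2(X,\MO_X)=0$, $\Pic_X$ is smooth at the origin of dimension $h^1(X,\MO_X)$. Since $X$ has rational singularities, you may compute on a resolution, and there rational chain connectedness \cite{GNT19} forces $\dim\Pic^0=0$. Hence $h^1(X,\MO_X)\le h^2(X,\MO_X)=0$. This is exactly the paper's argument; there is no separate non-reduced part to kill, and no torsors are needed.

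Part (3) also contains a false step. $H^1(X,\MO_X)^{F=1}$ classifies $\Z/p$-torsors (Artin--Schreier), not $\mu_p$-torsors. The latter correspond to $\Pic(X)[p]$ itself, which can be nonzero even when $H^1(\MO)=0$. For example, a classical Enriques surface in characteristic $2$ has $h^1(\MO)=0$ and canonical class a nontrivial $2$-torsion element. So your argument does not exclude $p$-torsion.

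The paper avoids torsors entirely. For a numerically trivial Cartier divisor $N$, the divisor $N-K_X$ is nef and big. Theorem~\ref{t H^1 nefbig} and Serre duality then give $h^2(X,N)=h^1(X,K_X-N)=0$. Hence $h^0(X,N)\ge\chi(X,N)=\chi(X,\MO_X)=1$, and so $N\sim 0$. This disposes of all torsion at once, $p$-primary included, and does not even use (2).
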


\begin{proof}
Let us show (1). 
Since $X$ has rational singularities \cite[Corollary 1.3]{ABL22}, 
we may assume that $X$ is smooth. 
By Serre duality and $H^0(X, K_X)=H^1(X, K_X)=0$ (Theorem \ref{t H^1 nefbig}), 
we get $H^2(X, \MO_X) = H^3(X, \MO_X)=0$. 
Since $X$ is RCC \cite[Theorem 1.5]{GNT19}, 
we get $h^1(X, \MO_X) \leq h^2(X, \MO_X)=0$. 
Thus (1) holds. 

Let us show (2). 
Take a finite \'etale morphism $\pi: Y \to X$ from a normal threefold $Y$. 
Then $Y$ is also a Gorenstein canonical weak Fano threefold. 
We then get 
\[
1 \overset{{\rm (1)}}{=} \chi(Y, \MO_Y) = (\deg \pi) \cdot \chi(X, \MO_X)  \overset{{\rm (1)}}{=} \deg \pi. 
\]
Thus (2) holds. 

Let us show (3). 
Take a numerically trivial Cartier divisor $N$ on $X$. 
It is enough to show $H^0(X, N) \neq 0$. 
We have $h^2(X, N) = h^1(X, K_X -N) = 0$  (Theorem \ref{t H^1 nefbig}). 
Then the Riemann--Roch theorem implies 
\[
h^0(X, N) \geq \chi(X, N) = \chi(X, \MO_X) =1. 
\]
Thus (3) holds. 
\end{proof}

\bibliographystyle{skalpha}
\bibliography{bibliography.bib}

\end{document}